\documentclass{amsart}

\usepackage[inner=3cm,outer=3cm]{geometry}
\usepackage{amsmath}
\usepackage{amssymb}
\usepackage{amsthm}
\usepackage{bbm}
\usepackage{mathabx}
\usepackage{url}
\usepackage{float}
\usepackage{graphicx}
\usepackage{caption}
\usepackage{subcaption}
\usepackage{verbatim}
\usepackage{bm}
\usepackage{functan}
\usepackage{enumerate}
\usepackage{enumitem}
\usepackage{hyperref}
\hypersetup{
colorlinks,
citecolor=black,
filecolor=black,
linkcolor=black,
urlcolor=black
}
\allowdisplaybreaks
\usepackage{amsrefs}

\numberwithin{equation}{section}
\newtheorem{theorem}{Theorem}[section]

\newtheorem{lemma}[theorem]{Lemma}

\newtheorem{claim}{Claim}

\newtheorem{proposition}[theorem]{Proposition}

\newcommand{\R}{\mathbb{R}}
\newcommand{\Z}{\mathbb{Z}}
\newcommand{\N}{\mathbb{N}}
\newcommand{\ee}{\mathbf{e}}
\newcommand{\uu}{\mathbf{u}}
\newcommand{\vv}{\mathbf{v}}
\newcommand{\vvn}{\mathbf{v}_n}
\newcommand{\NN}{\mathcal{N}}
\newcommand{\NNt}{\NN_\tau}
\newcommand{\LL}{\mathcal{L}}
\newcommand{\XX}{\mathbf{X}}
\newcommand{\eps}{\epsilon}
\newcommand{\half}{\frac{1}{2}}
\newcommand{\tht}{\theta}
\newcommand{\sg}{\sigma}
\newcommand{\dl}{\delta}
\newcommand{\al}{\alpha}
\newcommand{\la}{\lambda}

\newcommand{\f}{\frac}
\newcommand{\F}{\mathcal{F}}
\newcommand{\Sp}{\mathbb{S}}
\newcommand{\na}{\nabla}
\newcommand{\dd}{\partial}
\newcommand{\D}{\Delta}
\newcommand{\x}{\times}
\newcommand{\xsobot}{X^{s_1,b_1}_\tau}

\newcommand{\xsbt}{X^{s,b}_\tau}
\newcommand{\xsobotm}{X^{s_1-1,b_1-1}_\tau}
\newcommand{\xsbtm}{X^{s-1,b-1}_\tau}
\newcommand{\xxsbt}{\XX^{s,b}_\tau}
\newcommand{\xxsobot}{\XX^{s_1,b_1}_\tau}

\newcommand{\xxsb}{\XX^{s,b}}

\newcommand{\xsbm}{X^{s-1,b-1}}

\newcommand{\vp}{\varphi}
\newcommand{\rd}{\sqrt{-\D}}
\newcommand{\uun}{\uu_n}
\newcommand{\tft}{\widetilde{\mathcal{F}}_{\tau}}
\newcommand{\ft}{\mathcal{F}_{\tau}}
\newcommand{\lan}{\langle}
\newcommand{\ran}{\rangle}
\newcommand{\dt}{d_{\tau}}
\newcommand{\EE}{\mathcal{E}}
\newcommand{\ko}{k_1}
\newcommand{\kt}{k_2}
\newcommand{\kth}{k_3}
\newcommand{\T}{\mathcal{T}}

\newcommand{\been}{\bar{\mathbf{e}}_n}

\title{A Splitting Scheme for the Wave Maps Equation at Low Regularity}

\author[K.~Marsden]{Katie Marsden}
\address{Department of Mathematics, University of California, Los Angeles, CA 90095, USA (K.Marsden)}
\email{kmarsden@math.ucla.edu}

\author[F.~Rousset]{Fr\'ed\'eric Rousset}
\address{Laboratoire de Math\'ematiques d'Orsay (UMR 8628), Universit\'e Paris-Saclay, CNRS, 91405 Orsay Cedex, France (F. Rousset)}
\email{frederic.rousset@universite-paris-saclay.fr}

\author[K.~Schratz]{Katharina Schratz}
\address{LJLL (UMR 7598), Sorbonne Universit\'e, UPMC, 4 place Jussieu, 75005 Paris, France (K. Schratz)}
\email{katharina.schratz@sorbonne-universite.fr}

\begin{document}

\begin{abstract}
We prove convergence of a filtered Lie splitting scheme for the wave maps equation with low regularity initial data in dimension 3. The convergence analysis is performed in discrete Bourgain spaces, as has proved fruitful for the low regularity analysis of the equation in the continuous setting. 
An important   difficulty here is that 
the analysis of wave maps at low regularity requires the use of the null structure of the system, this structure
thus has to be preserved at the discrete level to get an effective stable  low regularity scheme. Since the null structure  involves time derivatives,  the scheme has to be designed carefully.
The presence of time derivatives in the nonlinearity  then constitutes the most significant source of numerical error. Nonetheless, we are able to prove convergence  of the scheme for all subcritical initial data in   $H^s$, $s>d/2$.
\end{abstract}

\maketitle

\tableofcontents
\section{Introduction}
This article concerns the numerical approximation of three dimensional wave maps into the sphere, described by
\begin{equation}\label{WM}
\begin{cases}
(\dd_t^2-\D)u=u\dd^\al u\cdot\dd_\al u\equiv-u(|\dd_tu|^2-|\na_xu|^2)\\
(u,\dd_t u)|_{t=0}=(u_0,v_0)
\end{cases}
\quad\quad (u:\R_t\times\R_x^3\rightarrow \Sp^2).
\end{equation}
Wave maps have been studied numerically via finite element methods in \cite{B2009,BFP2007,BLP2007}, yielding convergence to weak solutions. Weak solutions were further studied in \cite{KW2013} using a reformulation of \eqref{WM} as a first order system via the angular momentum variable $w:=u\times \dd_tu$. In this article we study a semi-discrete Lie splitting scheme for \eqref{WM} and prove convergence to \textit{strong }(Duhamel) solutions, with essentially minimal regularity assumptions on the initial data. Details of the scheme are described in Section \ref{SS_sec} and the main result is contained in Theorem \ref{thm:main_thm}.

In the continuous framework, the wave maps equation has attracted a great deal of attention due to its rich structure and usefulness as a model geometric wave equation. In particular, the question of local wellposedness is well understood. For $s>d/2+1$, local well-posedness in $H^s(\R^d)$ follows from classical energy arguments. This may be improved to $s>\max\{\f{d+1}{2},\f{d+5}{4}\}$ using Strichartz estimates, and the full subcritical range $s>d/2$ was obtained by Klainerman and Machedon \cite{klainerman1996smoothing} and Selberg \cite{selberg1999multilinear} by exploiting the null structure of the nonlinearity in Bourgain spaces (see Section \ref{subsec:background}). This is essentially the sharp threshold for local well-posedness, $s=d/2$ being the scaling critical space for the equation.

Numerically, it is not difficult to establish convergence of the (filtered) Lie splitting scheme for data in $H^s(\R^d)$, $s>d/2+1$, using energy methods, however the extension to low regularity is significantly more delicate. This is the main goal of the present article. There has recently been a large amount of interest in numerical schemes for dispersive equations at low regularity; in particular, both tools suitable to analyze the convergence at low regularity and new schemes with improved local error at low regularity have been introduced. For works on the   nonlinear Schr\"odinger equations, we refer to \cite{Ignat}, \cite{LunOs24}, \cite{LunOs25}, \cite{Li-Wu2021}, \cite{ostermann2022error}, \cite{ostermann2021error}, \cite{ostermann2022fourier}, \cite{Wu2022}; for works on the KDV equation to \cite{HoS}, \cite{LiWu25}, \cite{LiWu21}, \cite{RoS22}, \cite{WuZhao22}; and for works on nonlinear wave equations to \cite{Cao25}, \cite{RoS21}, 
\cite{ruff25}, \cite{ruff}, for example. Most pertinent to us is the article \cite{ostermann2022fourier} in which discrete Bourgain spaces are introduced to perform the convergence analysis, and the article \cite{ruff}, where Ruff and Schnaubelt establish low regularity convergence of a Lie splitting scheme for the nonlinear wave equation via discrete Strichartz estimates.

The main part of this article consists of developing a notion of discrete Bourgain spaces for the wave equation and establishing the necessary multilinear estimates. The Strichartz estimates proved in \cite{ruff} play an important role here. Aside from this, there are two key difficulties in studying wave maps at low regularity. The first is that the null structure of the nonlinearity plays a key role, and care must be taken to preserve this on the discrete level. The second is that the time derivatives in the nonlinearity create an additional source of error in the (temporal) discretization. 
We shall thus introduce and analyze a modified Lie splitting scheme which shows a suitable  null structure at the discrete level.
The second  issue is partially mitigated by enforcing a strong CFL condition, however this remains the largest source of error throughout our analysis.

We remark that our scheme does not preserve the sphere constraint $\uu(t,x)\in\Sp^2$. However, since we establish convergence in $H^s$, $s>d/2$, the solutions approach the sphere uniformly as the time step is taken to zero. There are previous works on numerical wave maps in which the sphere constraint is preserved, however their methods do not straightforwardly apply at low regularity. In \cite{BFP2007} the authors use a projection to the sphere on each time step, which requires a significant amount of smoothness to control. On the other hand the method presented in \cite{BLP2007,KW2013} is incompatible with the null structure.

In the remainder of this introduction we briefly review the relevant theory of \eqref{WM}, then define the scheme and state the main convergence result.

\subsection{Theoretical background}\label{subsec:background}
For a generic quadratic derivative wave equation scaling like \eqref{WM}, one does not expect wellposedness in low regularity subcritical spaces. For example, Lindblad \cite{lindblad1993sharp} showed illposedness of the nonlinear wave equation
$$
\Box u=(\dd_t u)^2
$$
in $H^{2-\eps}(\R^3)\times H^{1-\eps}(\R^3)$ for any $\eps>0$. There is a particular structure in the wave maps nonlinearity which counteracts the effect of the derivatives and allows for stronger wellposedness results. This is known as the null structure, encapsulated in the identity
\begin{equation}\label{eqn:NS}
|\dd_tu|^2-|\na_xu|^2=\Box(u\cdot u)-2u\cdot \Box u
\end{equation}
which reveals cancellations between linear waves propagating in parallel along the lightcone.

The null structure can be exploited by working in the Bourgain spaces (also called hyperbolic Sobolev spaces, see e.g. \cite[Chapter 2.3]{geba2016introduction}), defined by
$$
\|u\|_{X^{s,b}}:=\|\langle |\xi|+|\sg|\rangle^s\langle |\sg|-|\xi|\rangle^b\,\widetilde{u}(\sg,\xi)\|_{L^2_\sg L^2_\xi(\R\times\R^3)},
$$
where $\widetilde{u}:=\widetilde{\F}u$ denotes the spacetime Fourier transform of $u$. 
These spaces are at the regularity of $H^s$ in the sense that
\begin{equation}\label{eqn:embedding}
X^{s,b}\hookrightarrow C^0_t H^s_x
\end{equation}
for any $b>1/2$, $s+b>3/2$ \cite[Proposition 2.7]{geba2016introduction},
and there is the straightforward relation
\begin{equation}\label{eqn:box}
\|\Box u\|_{X^{s-1,b-1}}\lesssim \|u\|_{X^{s,b}}\hspace{3em}(s,\,b\in\R).
\end{equation}
For compact time intervals, the local Bourgain space is defined as the quotient
$$
X^{s,b}([0,T]):=X^{s,b}(\R)\Big/\{u=0\text{ a.e. on } [0,T]\},
$$
with norm $\|u\|_{X^{s,b}([0,T])}=\inf\{\|v\|_{X^{s,b}}:v=u \text{ a.e. on } [0,T]\}$.

Let us now re-cast equation \eqref{WM} as a first order system. Defining $\uu:=(u,\dd_t u)^T$, $\uu_0=(u_0,v_0)^T$, the wave maps equation \eqref{WM} is equivalent to
\begin{equation}
\dd_t\uu =
\begin{pmatrix}
0 & 1\\
\D & 0
\end{pmatrix}\uu
+
\begin{pmatrix}
0\\
-u(|\dd_tu|^2-|\na u|^2)
\end{pmatrix} 
=: 
\begin{pmatrix}
0 & 1\\
\D & 0
\end{pmatrix}
\uu
+\NN(u)
\label{WM3}
\end{equation}
with $\uu(0)=\uu_0$. We will often use $\NN(u)$ to mean only the second component of the nonlinearity vector.

Defining the free evolution operator
\begin{equation*}
\LL_t:=
\begin{pmatrix}
	\cos(t\rd) & \f{\sin(t\rd)}{\rd}\\
	-\rd\sin(t\rd) & \cos(t\rd)
\end{pmatrix},
\end{equation*}
we see that \eqref{WM3} admits the Duhamel formulation
\begin{equation}\label{eqn:mildWM}
\uu(t)=\LL_t\uu_0+\int_0^t\LL_{t-s}\,\NN(u(s))ds.
\end{equation}

The local existence theory for near-critical wave maps is summarized in the following theorem. We consider initial data $\uu_0\in (p+H^s)\x H^{s-1}$, for some fixed $p\in\R^3$.

Let $\mathbf{X}^{s,b}=X^{s,b}\x X^{s-1,b}$ and $p+\mathbf{X}^{s,b}=(p+X^{s,b})\x X^{s-1,b}$.
\begin{theorem}[\cite{klainerman1996smoothing,MR1901147}]\label{lwp}
Let $s,b\in\R$ with $1/2>s-3/2>b-1/2>0$. Let $p\in \R^3$, $\uu_0\in (p+H^s(\R^3))\x H^{s-1}(\R^3)$. There exists $T_\text{max}>0$ such that \eqref{eqn:mildWM} admits a unique maximal solution $\uu\in p+\XX^{s,b}_{\text{loc}}([0,T_{\text{max}}))$.

If in fact $u_0:\R^3\rightarrow\Sp^2$, $v_0:\R^3\rightarrow T\Sp^2$, the solution takes values in the sphere for all $t\in[0,T_{\text{max}})$.
\end{theorem}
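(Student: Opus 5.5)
We argue by a contraction in Bourgain spaces, using the null structure \eqref{eqn:NS} to rewrite the nonlinearity.

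\textbf{Reduction.} Write $u=p+w$ with $w\in X^{s,b}$. Multiply by a smooth time cutoff $\chi(t/T)$ and consider the Duhamel map
\[
\Phi(w)=\chi(t)\Big(\LL_t\uu_0-p\Big)_1+\chi(t/T)\int_0^t \frac{\sin((t-s)\rd)}{\rd}\,\NN(p+w)(s)\,ds .
\]
Here $\LL_t$ acts on the constant $p$ trivially (since $\Delta p=0$), so the first term is a free wave with data $(u_0-p,v_0)\in H^s\times H^{s-1}$.

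\textbf{Linear estimates.} These are the standard ones:
\[
\|\chi\,\LL_t \mathbf{f}\|_{X^{s,b}}\lesssim \|\mathbf{f}\|_{H^s\times H^{s-1}},
\qquad
\Big\|\chi(t/T)\int_0^t \frac{\sin((t-s)\rd)}{\rd}F\,ds\Big\|_{X^{s,b}}\lesssim T^{\epsilon}\|F\|_{X^{s-1,b-1+\epsilon}}.
\]
Both are proved by the usual one-dimensional-in-$\sigma$ computation after factoring the half-waves $e^{\pm it|\nabla|}$. The gain $T^\epsilon$ comes from slightly lowering the modulation exponent, which is allowed because $b-1/2>0$.

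\textbf{Nonlinear estimate.} Using \eqref{eqn:NS} with $u=p+w$, we have $\partial^\alpha u\cdot\partial_\alpha u=\partial^\alpha w\cdot\partial_\alpha w=\tfrac12\big(\Box(w\cdot w)-2w\cdot\Box w\big)$. The key bilinear null-form estimate is
\[
\|\partial^\alpha w\cdot \partial_\alpha z\|_{X^{s-1,b-1+\epsilon}}\lesssim \|w\|_{X^{s,b}}\|z\|_{X^{s,b}}.
\]
We prove it in the Klainerman--Machedon/Selberg form via the identity $2Q_0(w,z)=\Box(wz)-w\Box z-z\Box w$. The $\Box(wz)$ term is handled by \eqref{eqn:box} together with the algebra property $X^{s,b}\cdot X^{s,b}\subset X^{s,b}$, valid for $s>3/2$ and $b>1/2$. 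The terms $w\,\Box z$ require the product estimate $X^{s,b}\cdot X^{s-1,b-1}\subset X^{s-1,b-1+\epsilon}$. This is where the conditions $s-3/2>b-1/2$ and $s<2$ enter; it is proved by dyadic decomposition in frequency and modulation plus the Klainerman--Machedon bilinear $L^2$ estimates for products of free waves (transferred to $X^{s,b}$ via $b>1/2$). The trilinear term $u\,Q_0(w,w)$ then follows by combining the algebra property with the estimate $X^{s,b}\cdot X^{s-1,b-1+\epsilon}\subset X^{s-1,b-1+\epsilon}$ (same proof). The constant $p$ contributes only the linear-in-$Q_0$ piece $p\,Q_0(w,w)$, which is harmless.

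\textbf{Conclusion.} For $T$ small depending on $\|\uu_0\|$, $\Phi$ is a contraction on a ball of $X^{s,b}$. Uniqueness in $X^{s,b}([0,T])$ holds by the same difference estimates applied to extensions. Maximal solutions are obtained by iteration, and $\partial_t u\in X^{s-1,b}$ follows directly from the Duhamel formula.

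\textbf{Sphere constraint.} Set $\rho=|u|^2-1$. For smooth data, a computation shows $\Box\rho=2(|u|^2-1)\,\partial^\alpha u\cdot\partial_\alpha u$, with $\rho(0)=0$ and $\partial_t\rho(0)=2u_0\cdot v_0=0$. Energy uniqueness then gives $\rho\equiv0$. Rough data are handled by approximating with smooth sphere-valued data and using continuous dependence together with the embedding \eqref{eqn:embedding}.

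\textbf{Main obstacle.} The bilinear estimate for $w\,\Box z$ in the low-high interaction when $z$ has high modulation is the delicate step. There one must carefully exploit the gap $s-3/2>b-1/2$, following Selberg's atlas of product estimates \cite{selberg1999multilinear}.
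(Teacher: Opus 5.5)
There is a genuine gap at the point where the smallness $T^{\eps}$ is supposed to come from. Your linear estimate needs the nonlinearity in $X^{s-1,b-1+\eps}$. However, the three pieces of $2Q_0(w,z)=\Box(wz)-w\,\Box z-z\,\Box w$ cannot be placed there one at a time.

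From \eqref{eqn:box} and the algebra property you only obtain $\Box(wz)\in X^{s-1,b-1}$; the $+\eps$ version would require $wz\in X^{s,b+\eps}$. Likewise, the product estimate $X^{s,b}\cdot X^{s-1,b-1}\subset X^{s-1,b-1+\eps}$ that you invoke for $w\,\Box z$ is false for every $\eps>0$. To see this, fix Schwartz functions $\psi,\phi$ on $\R\x\R^3$ with spacetime Fourier support in the unit ball and $\psi\phi\not\equiv0$, and put $F_N=e^{iNt}\phi$. Then $F_N$, $\psi F_N$ and $\Box(\psi F_N)$ all live at $|\sg|\simeq N$, $|\xi|\lesssim1$, i.e. at modulation $\simeq N$. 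With constants depending on $\psi,\phi$, and $\|\psi\|_{X^{s,b}}\lesssim1$,
\begin{equation*}
\|\psi F_N\|_{X^{s-1,b-1+\eps}}\simeq N^{\eps}\|F_N\|_{X^{s-1,b-1}},\qquad\|\Box(\psi F_N)\|_{X^{s-1,b-1+\eps}}\simeq N^{\eps}\|F_N\|_{X^{s,b}}.
\end{equation*}
By contrast, $Q_0(\psi,F_N)$ carries only one derivative on $F_N$ and satisfies $\|Q_0(\psi,F_N)\|_{X^{s-1,b-1+\eps}}\lesssim N^{\eps-1}\|F_N\|_{X^{s,b}}$. So the individual terms are only in $X^{s-1,b-1}$, and any modulation gain lives in their cancellation, which a term-by-term estimate discards. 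The regime you single out as the main obstacle (one factor at dominant high modulation) is exactly where your claimed bound fails, not merely where it is delicate. What your argument does prove is the $\eps=0$ estimate, i.e. the small-data Picard iteration the paper sketches after the theorem using \eqref{eqn:algebra-estimates}, \eqref{eqn:NS} and \eqref{eqn:box}. For large data your contraction has no small factor.

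Large data needs a different source of smallness. This is why the paper calls that case more subtle and defers to \cite{MR1901147,selberg1999multilinear}. Its own substitute is the continuous version of Theorem \ref{thm:trilinear_thm}, which bounds $\eta_T\,\T(f,g,h)$ directly in $X^{s-1,b-1}$ with a factor $T^{\al(s,b)}$. After a Littlewood--Paley decomposition in frequency and modulation, it splits into two kinds of regime:
\begin{itemize}
\item For the low--high--high interaction with low output modulation and the relevant factor at frequency above $T^{-1}$, it uses the null identity. The cut-off is removed by Lemma \ref{lem:ctnty-time-cut-off}, and the gain comes from the dyadic decay in \eqref{eqn:statement1}.
\item In every other regime it reverts to the form $f\,Dg\cdot Dh$, with one derivative per factor, which avoids the example above. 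The gain then comes from H\"older in time against $\eta_T$ combined with Strichartz and modulation Bernstein estimates.
\end{itemize}
Alternatively, you could prove the null-form bound with $+\eps$ directly, estimating the symbol so that only one derivative falls on each factor in the high-modulation regimes, or use scaling plus finite speed of propagation.

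A secondary issue concerns your linear estimate in the paper's norm, whose weight is $\lan|\xi|+|\sg|\ran^{s}$. There $X^{s-1,b-1}$ behaves like $H^{s+b-2}_t$ at high modulation, so your $T^{\eps}$ estimate itself requires $s+b-2+\eps<1/2$. This is the role of the restriction $s+b<5/2$ used in the paper.

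Your sphere-constraint argument is fine.
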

Uniqueness is in the sense that $\uu$ is unique in $p+\XX^{s,b}([0,R])$ for any $0\leq R< T_{\text{max}}$. We use this convention throughout the article.

For small initial data, the proof of Theorem \ref{lwp} is by Picard iteration in $p+\XX^{s,b}([0,T])$. The key step is controlling the nonlinearity in $X^{s-1,b-1}$, which is achieved by repeated use of the algebra estimates
\begin{equation}
\|u\cdot v\|_{X^{s,b}}\lesssim\|u\|_{X^{s,b}}\|v\|_{X^{s,b}},\hspace{2em}\|u\cdot F\|_{X^{s-1,b-1}}\lesssim\|u\|_{X^{s,b}}\|F\|_{X^{s-1,b-1}}\label{eqn:algebra-estimates}
\end{equation}
with the identity \eqref{eqn:NS} and property \eqref{eqn:box}. See \cite{geba2016introduction} for a textbook treatment of this case. The large data problem is more subtle; the reader may consult the classical works \cite{MR1901147,selberg1999multilinear}, or Section 10.1 of \cite{marsden2024global}.

\subsection{The scheme}\label{SS_sec}
Fix $\tau\in(0,1)$. In the previous section we discussed the importance of the null structure in the low regularity analysis of \eqref{WM}. Unfortunately, this structure is not preserved on the numerical level. Indeed, letting $\Box_\tau$ denote the semi-discrete wave operator,
$$
\Box_\tau u_n:=\f{u_n-2u_{n-1}+u_{n-2}}{\tau^2}-\D u_n,
$$
and
$$
D_{m\tau}u_k:=\f{u_k-u_{k-m}}{m\tau},
$$
we have
\begin{align}
&\Box_\tau(u_k\cdot v_k)-\Box_\tau u_k\cdot v_k-u_k\cdot\Box_\tau v_k\nonumber\\
&=2(D_\tau u_k\cdot D_\tau v_{k-1}-\na_x u_k\cdot\na_x v_k)-2D_\tau u_k\cdot D_{2\tau}v_k+2D_\tau u_{k-1}D_{2\tau}v_k.\label{discreteNS}
\end{align}

This produces remainder terms in the discretization of $\NN(u)$ which have the generic structure of $(\dd_t u)^2$, which we do not expect to be able to handle at low regularity. To avoid this issue, we re-write the nonlinearity in the form
\begin{align}
\NN(u(t))=-
\begin{pmatrix}
	0\\
	u\bigl(\Box(u\cdot u)-2u\cdot\Box u\bigr)
\end{pmatrix}
\label{WM2}
\end{align}
before making any approximations. This is a valid reformulation for any $u\in X^{s,b}$.

To write down our scheme we need the Fourier multipliers
\begin{equation}\label{eqn:chi}
\Pi_{\tau^{-1/2}}=\chi\left(\f{-i\dd_x}{(100\tau^{1/2})^{-1}}\right)
\end{equation}
Here $\chi$ is a smooth function equal to $1$ for $|\xi|\leq1/2$ and vanishing for $|\xi|>1$. We will often drop the subscript $\tau^{-1/2}$.

The scheme we employ is the following filtered Lie splitting scheme,
invoking explicit Euler to approximate the nonlinear flow:
\begin{align}
\uu_{n+1}=\LL_\tau\bigl(\uun+\tau\chi_{[2\tau,\infty)}(n\tau)\NN_\tau(u_n)\bigr),\qquad \uu_0=\Pi\uu(0).\label{SS}
\end{align}
Here $\NNt$ is the discrete approximation to the nonlinearity \eqref{WM2},
\begin{equation}\label{eqn:N_tau}
\NN_\tau(u_n):=-\Pi
\begin{pmatrix}
0\\
\Pi u_n\, \bigl(\Box_\tau(\Pi u_n\cdot\Pi u_n)-2\Pi u_n\cdot\Box_\tau\Pi u_n\bigr)
\end{pmatrix}.
\end{equation}

The projection to frequencies $\lesssim \tau^{-1/2}$ may seem unnatural given the wave equation scaling, however this condition is necessary for approximating the discrete time derivatives in the nonlinearity. See Lemmas \ref{lem4} and \ref{lem:nonlinear_lemma}.

Our main result is the following.
\begin{theorem}\label{thm:main_thm}
Let $3/2<s_1<s<2$ with $s-s_1<2-s$. Let $p\in\R^3$. For $\uu_0\in (p+H^s(\R^3)) \x H^{s-1}(\R^3)$ let $\uu\in p+\XX^{s,b}([0,T_{\text{max}}))$ be the exact solution to the wave maps equation given in Theorem \ref{lwp} and let $\uun$ denote the iterates defined in \eqref{SS}. For every $0\leq T<T_{\text{max}}$ there exists $\tau_0(T)>0$ such that for all $0<\tau<\tau_0$ it holds
\begin{equation}\label{eqn:rate}
\|\uu(t_n)-\uun\|_{H^{s_1}\x H^{s_1-1}}\leq C(T,s,s_1,\uu_0)\tau^{(s-s_1)/2}
\end{equation}
whenever $0\leq t_n:= n\tau\leq T$.
\end{theorem}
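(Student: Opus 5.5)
We compare the iterates $\uun$ with the exact solution in a discrete Bourgain space framework. We define, for sequences $(u_n)_{n\in\Z}$, a discrete norm $\|u\|_{X^{s,b}_\tau}$ via the discrete-time Fourier transform in $n$, with weights $\langle|\xi|+|d_\tau(\sg)|\rangle^s\langle|d_\tau(\sg)|-|\xi|\rangle^b$. Here $d_\tau(\sg)=\tau^{-1}\sin(\tau\sg/2)\cdot 2$ is the symbol of the discrete derivative, and $\sg\in[-\pi/\tau,\pi/\tau]$. We set $\xxsbt=\xsbt\x X^{s-1,b}_\tau$. The filter $\Pi$ restricts to $|\xi|\lesssim\tau^{-1/2}\ll\tau^{-1}$, so the discrete and continuous symbols are comparable there.

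\textbf{Step 1 (linear estimates).} For $1/2<b<1$ we establish the discrete analogues of the continuous linear estimates:
\begin{itemize}
\item the free-flow bound $\|\eta(n\tau)\LL_{n\tau}\Pi\uu_0\|_{\xxsbt}\lesssim\|\uu_0\|$;
\item the discrete Duhamel bound $\|\eta\,\tau\sum_{k<n}\LL_{(n-k)\tau}F_k\|_{\xsbt}\lesssim\|F\|_{X^{s-1,b-1}_\tau}$, with a gain $T^{\eps}$ on short intervals;
\item the embedding $\xsbt\hookrightarrow\ell^\infty_\tau H^s$;
\item the discrete identity $\|\Box_\tau u\|_{\xsbtm}\lesssim\|u\|_{\xsbt}$, which follows because $\Box_\tau$ has symbol $d_\tau(\sg)^2-|\xi|^2$ up to harmless shifts.
\end{itemize}
These follow by conjugating with $\LL$ and using one-dimensional discrete time estimates, as in the Schr\"odinger setting.

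\textbf{Step 2 (multilinear estimates).} These are the core of the argument. For frequency-localized sequences we prove the discrete algebra estimates
\[
\|\Pi u\cdot\Pi v\|_{\xsbt}\lesssim\|u\|_{\xsbt}\|v\|_{\xsbt},\qquad \|\Pi u\cdot \Pi F\|_{\xsbtm}\lesssim\|u\|_{\xsbt}\|F\|_{\xsbtm},
\]
uniformly in $\tau$. The plan is to transfer the continuous Klainerman--Machedon/Selberg dyadic estimates. Because $|\xi|\lesssim\tau^{-1/2}$, temporal frequencies near the lightcone satisfy $|\sg|\lesssim\tau^{-1/2}$. Away from the cone the weight $\langle|d_\tau\sg|-|\xi|\rangle$ is large, so those pieces are handled by crude bounds. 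Near the cone, the discrete Strichartz estimates of \cite{ruff} replace the continuous ones, and aliasing terms, which arise where $\sg_1+\sg_2$ wraps modulo $2\pi/\tau$, are nonresonant and small.

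With the algebra estimates, combined with the reformulation \eqref{WM2}, we get
\[
\|\NN_\tau(u)\|_{\xsbtm}\lesssim\|u\|_{\xsbt}^3 .
\]
Taking differences gives a Lipschitz bound, and applying the same argument with $s_1$ in place of $s$ gives a trilinear estimate with two factors in $X^{s}_\tau$ and one in $X^{s_1}_\tau$.

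\textbf{Step 3 (consistency).} We let $\uu^\tau(t)$ be the exact solution with filtered data $\Pi\uu_0$ and $\Pi$-filtered nonlinearity, and sample it at $t_n$. First, the filtering error costs $\|(1-\Pi)\uu\|_{X^{s_1}}\lesssim\tau^{(s-s_1)/2}$, using the continuous theory together with Theorem \ref{lwp} and continuous dependence. Next we write the sampled $\uu^\tau(t_n)$ as the scheme plus a local error $\tau\sum_k\LL_{(n-k)\tau}R_k$, and bound $\|R\|_{X^{s_1-1,b_1-1}_\tau}\lesssim\tau^{(s-s_1)/2}$. The error has two sources. The Duhamel integral is replaced by a left-point rule. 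The continuous $\Box$ is replaced by $\Box_\tau$. For the latter, $d_\tau(\sg)^2-\sg^2=O(\tau^2\sg^4)$, and near the cone $|\sg|\lesssim\tau^{-1/2}$. This step, which corresponds to Lemmas \ref{lem4} and \ref{lem:nonlinear_lemma}, is the main obstacle. Here the time derivatives in the nonlinearity are discretized, and the loss must be absorbed by trading $s-s_1$ derivatives against powers of $\tau$ within the CFL-type restriction $|\xi|\lesssim\tau^{-1/2}$. The condition $s-s_1<2-s$ is expected to enter exactly here, in off-cone contributions where $b$-weights cannot compensate. Our first attempt is to put the high-modulation factor in $X^{s,b}$ and spend its excess modulation. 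The cutoff $\chi_{[2\tau,\infty)}$ handles the first two steps, where $\Box_\tau$ needs history; their contribution is $O(\tau)$ directly.

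\textbf{Step 4 (stability and conclusion).} We set $e_n=\uu^\tau(t_n)-\uun$. Then $e$ satisfies a discrete Duhamel equation whose nonlinear part is $\NN_\tau(u^\tau)-\NN_\tau(u_n)$, plus the consistency error. On a short interval of length $T_1$, chosen depending only on $\|\uu\|_{\xxsb([0,T])}$, the Step 1 gain $T_1^\eps$ and the trilinear Lipschitz bound of Step 2 give
\[
\|e\|_{\xxsobot}\lesssim\|e_{\text{start}}\|+\tau^{(s-s_1)/2}.
\]
This closes by a bootstrap. The bootstrap needs $\|\uun\|$ to stay bounded in the discrete $X^{s}$ space, or at least $X^{s_1}$ with $s_1>3/2$; this follows since the error is small for $\tau<\tau_0$. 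Iterating over $T/T_1$ intervals and using the embedding into $\ell^\infty H^{s_1}$ yields \eqref{eqn:rate}.
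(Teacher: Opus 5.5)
Your architecture matches the paper's: discrete Bourgain spaces under a $\tau^{-1/2}$ filter, discrete Strichartz estimates, an intermediate filtered solution $u_\tau$, local error plus short-interval stability, then iteration. However, the step you yourself call the main obstacle is left open, and the plan you sketch for it cannot work.

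\emph{Consistency of $\Box_\tau$.} The sequence $u_\tau(t_n)$ is a sampling, so by Poisson summation its discrete transform contains the aliased values $\widetilde{u}_\tau(\sigma+2\pi m/\tau,\xi)$, $m\neq0$. The filter restricts only $|\xi|$, and the nonlinearity produces large temporal frequencies off the cone. The continuous $\Box$ weights these aliased values by $(\sigma+2\pi m/\tau)^2$. Summing $\sum_{m\neq0}|\sigma+2\pi m/\tau|^2\,|\widetilde{u}_\tau(\sigma+2\pi m/\tau,\xi)|$ using only $u_\tau\in X^{s,b}$ requires $s+b>5/2$. That is incompatible with the constraint $s+b<5/2$ under which the time-gain and sharp-cutoff estimates hold (and, for $s\le 7/4$, already with the bilinear requirement $b-\tfrac12<s-\tfrac32$). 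So spending the excess modulation of an $X^{s,b}$ factor does not suffice.

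The missing idea is to extract extra time regularity from the equation. Lemma \ref{lem:nonlinear_lemma} shows $\|\partial_t u^g_\tau\|_{X^{s,b}}\lesssim\tau^{-1/2}\|u^g_\tau\|_{X^{s,b}}(1+\|u^g_\tau\|_{X^{s,b}})^2$. The proof differentiates the filtered Duhamel formula, absorbs the nonlinear term via the trilinear estimate for small $T_1$, and bounds $\partial_t^2u_\tau(0)=\Delta u_\tau(0)+\NN_\Pi(u_\tau(0))$ in $H^{s-1}$ with a $\tau^{-1/2}$ loss coming from the filter. This feeds into the Poisson-summation bound of Lemma \ref{key_lemma}. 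There, $\tau\|\partial_t^3u_\tau\|_{X^{s_1-1,b_1-1}}$ is split into $|\sigma|\le2|\xi|$, where $|\sigma|\lesssim\tau^{-1/2}$, and $|\sigma|>2|\xi|$, which is absorbed by the modulation weight. The same is done for $|u_\tau|^2$ via the algebra estimate.

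Also, $\Box_\tau$ is a backward difference, so its symbol differs from that of $\Box$ by $O(\tau|\sigma|^3)$, not $O(\tau^2\sigma^4)$. The phase $e^{-i\tau\sigma}$ is not a harmless shift. It is this first-order error at $|\sigma|\sim\tau^{-1/2}$ that already forces the $\tau^{-1/2}$ filter in Lemma \ref{lem4}.

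\emph{Stability.} The stability step does not close as written. A factor $T_1^{\epsilon}$ from the Duhamel estimate requires the forcing in $X^{s_1-1,b_1-1+\epsilon}_\tau$. Your Step 2 places the nonlinearity only in $X^{s_1-1,b_1-1}_\tau$, so for large data nothing makes the Lipschitz term absorbable. You need one of two things. Either prove multilinear estimates with slack in the modulation exponent, or, as the paper does, prove a trilinear estimate with the cutoff inside:
$\|\eta_T\T(f,g,h)\|_{X^{s-1,b-1}_\tau}\lesssim T^{\alpha(s,b)}(1+\|f\|_{X^{s,b}_\tau})\|g\|_{X^{s,b}_\tau}\|h\|_{X^{s,b}_\tau}$ (Theorem \ref{thm:trilinear_thm}). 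Its proof uses the null structure and needs $s+b<5/2$, so that at most half a time derivative lands on $\eta_T$.

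The same constraint $s+b<5/2$ is what allows the sharp cutoffs $\chi_{[2\tau,\infty)}$ and $\chi_{[0,2\tau)}$ to be removed via Lemma \ref{lem:ctnty-time-cut-off}. In these norms that gives $\tau^{s-s_1}$ for the first two steps, not $O(\tau)$ ``directly''. The quadrature error separately needs $b-\frac12>s-s_1$, through the interpolation in the claim inside Lemma \ref{lem:local_error_bound}. Together these are where the hypothesis $s-s_1<2-s$ enters: one must choose $s-s_1<b-\frac12<2-s$. It does not come from off-cone consistency terms.
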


We remark that the upper bound $s<2$ coincides with the lower bound until which Strichartz arguments suffice in the wellposedness theory. We therefore expect that the result extends to all $s>3/2$ using discrete Strichartz estimates. We expect our arguments to generalize to $d\geq3$, and will present results for arbitrary $d\geq2$ where applicable.

\begin{center}
	\begin{figure}[t]
		\includegraphics[scale=0.15]{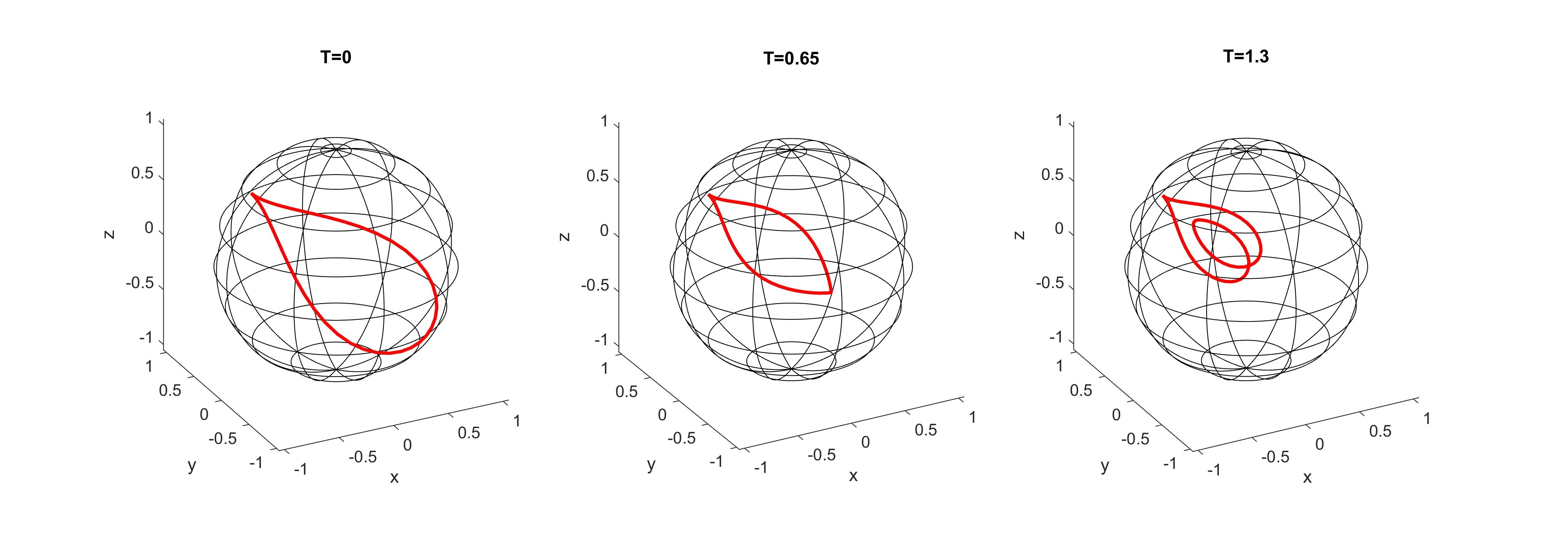}
		\caption{Numerical simulation of 1D wave maps by filtered Lie splitting. The initial data is $u_0(x)=\bigl(1/\sqrt{2}(-\cos\tht(x)+\sin\tht(x)\cdot\sin\phi(x)),-\sin\tht(x)\cdot\cos\phi(x),1/\sqrt{2}(\cos\tht(x)+\sin\tht(x)\cdot\sin\phi(x))\bigr)$, $v_0(x)=(0,0,0)$ for $\tht(x)=2e^{-x^2}$, $\phi(x)=xe^{-x^2}$.}
	\end{figure}
\end{center}

The rest of this paper is organized as follows. In Section \ref{sec:discrete-Bourgain-spaces} we define the discrete wave Bourgain spaces and establish corresponding linear estimates. The wave Bourgain spaces are more complicated to study than their Schr\"odinger analogue due to the symmetric treatment of time and space. Sections \ref{sec:bilinear-estimates} and \ref{sec:trilinear-estimates} concern the development of nonlinear estimates based on Littlewood-Paley decompositions in the frequency and modulation variables. In order to adapt arguments from the continuous setting we must carefully study the geometry of discrete wave interactions in Fourier space, and this comprises the main novelty of these sections. Having established the Bourgain space framework, we return to the question of convergence in Section \ref{sec:error_representation}. Here we use continuous estimates to show that the frequency truncation causes an acceptable error in the approximation, and isolate the local error from the expression which remains. In Section \ref{sec:local-error} we bound the local error using the nonlinear estimates established in Sections \ref{sec:bilinear-estimates} and \ref{sec:trilinear-estimates}, and in Section \ref{section:main_theorem} we complete the proof of Theorem \ref{thm:main_thm}. In Section \ref{section:numerics}, we present the results of numerical experiments.

\section{Discrete Bourgain spaces and linear estimates}\label{sec:discrete-Bourgain-spaces}
In this section we develop a notion of discrete Bourgain spaces for the wave operator and establish linear estimates. The results of this section are valid on $\R\times\R^d$ for any $d\geq1$, and all implicit constants may depend on $d$. 

Assume $0<\tau<1$.
The discrete time Fourier transform of a sequence $u_n=(u_n)_{n\in\Z}$ is given by
$$
\F_\tau(u_n)(\sg):=\tau\sum_{n\in\Z}u_ne^{-in\tau\sg}, \hspace{3em}\sg\in[-\pi/\tau,\pi/\tau],
$$
with the inversion formula
\begin{equation}\label{eqn:inversion}
u_n=\f{1}{2\pi}\int_{-\pi/\tau}^{\pi/\tau}e^{in\tau\sg}\F_\tau(u_n)(\sg)d\sg.
\end{equation}
For a sequence $u_n=(u_n(x))_{n\in\Z,\,x\in\R^d}$ we define the semidiscrete Fourier transform
$$
\tft(u_n)(\sg,\xi):=\tau\sum_{n\in\Z}\hat{u}_n(\xi)e^{-in\tau\sg}, \hspace{3em}\hat{u}_n(\xi)=\int_{x\in\R^d}u_n(x)e^{-ix\cdot\xi}\,dx,
$$
for $\xi\in\R^d,\ \sg\in[-\pi/\tau,\pi/\tau]$.
We will sometimes use $\widetilde{u}_n$ to denote $\tft(u_n)$. Using the notation
$$
\|u_n\|_{\ell^p_\tau}^p=\tau\sum_{n\in\Z}|u_n|^p.
$$
we have Parseval's identity
\begin{equation*}\label{parseval}
\|u_n\|_{\ell^2_\tau L^2_x}=(2\pi)^{-(d+1)}\|\tft(u_n)\|_{L^2_{\sg,\xi}([-\pi/\tau,\pi/\tau]\times\R^d)}.
\end{equation*}
We then define the discrete Bourgain norm
\begin{equation}\label{eqn:Bourgain_norm}
\|u_n\|_{X^{s,b}_{\tau}}:=\|\langle \dt(|\sg|+|\xi|)\ran^{s}\lan\dt(|\sg|-|\xi|)\ran^{b}\tft(u_n)(\sg,\xi)\|_{L^2_{\sg,\xi}([-\pi/\tau,\pi/\tau]\times\R^d)}.
\end{equation}
Here $\dt$ is the $2\pi/\tau$--periodic function
$$
\dt(x):=\f{e^{ix\tau}-1}{\tau}=ie^{ix\tau/2}\f{\sin(x\tau/2)}{(x\tau/2)}x.
$$
To avoid cancellations due to periodicity we restrict to functions with frequency support in $\{|\xi|\leq \pi/2\tau\}$, so that $|\dt(|\sg|\pm|\xi|)|\simeq\bigl||\sg|\pm|\xi|\bigr|$ in the support of the integral \eqref{eqn:Bourgain_norm}, and in particular
$$
|\dt(|\sg|+|\xi|)|\simeq| \dt(\sg)|+| \dt(\xi)|
$$
We remark that the norm \eqref{eqn:Bourgain_norm} is slightly more difficult to work with than the norm introduced in \cite{ostermann2022fourier} since the weights are not periodic in the $\sg$ variable. Our choice reflects the symmetry in the space and time variables and is thus convenient for establishing linear estimates.
As in the continuous case, we define the local discrete Bourgain spaces via
\begin{equation*}
	\|u_n\|_{X^{s,b}_\tau([0,T])}=\inf\{\|v_n\|_{X^{s,b}_\tau}:v_n=u_n \text{ for all } n\tau \in [0,T]\}.
\end{equation*}

The discrete Bourgain spaces enjoy many of the same properties as their continuous counterparts, as shown in the following lemma. We refer the reader to \cite[Section 2.3]{geba2016introduction} for the analogous results in the continuous case, and to \cite{ostermann2022fourier} for the discrete case in the context of the Schr\"odinger equation.

Let $\xxsbt:=\xsbt\x X^{s-1,b}_\tau$.
\begin{lemma}\label{lem:bourgain_prop}
Let $(\uu_n)_n$ be a sequence of functions with Fourier transform supported in $\{|\xi|\leq\pi/2\tau\}$ and $\eta$ be a Schwarz function. The following hold with implicit constants depending only on $\eta,\,s$ and $b$.
\begin{enumerate}[label=(\Roman*)]
	\item 
	For $s\geq1$, $b>1/2$,
	\begin{equation}\label{item1}
		\|\uu_n\|_{\ell^\infty_\tau (H^s\times H^{s-1})}\lesssim\|\uu_n\|_{\xxsbt}.
	\end{equation}
	\item\label{item:II} For $s,\,b\in\R$,
	\begin{equation}\label{eqn:time_cut_off_continuity}
	\|\eta(n\tau)u_n\|_{\xsbt}\lesssim\|u_n\|_{\xsbt}.
	\end{equation}
	\item For $s,\,b\in\R$, $s\geq1$,
	\begin{equation}\label{item2}
		\|\eta(n\tau)\LL_{n\tau}\uu_0\|_{\xxsbt}\lesssim \|\uu_0\|_{H^s\x H^{s-1}}.
	\end{equation}
	\item Set
	$$
	\mathbf{v}_n(x):=\tau\sum_{m=0}^n\LL_{(n-m)\tau}
	\begin{pmatrix}
		0\\
		F_m(x)
	\end{pmatrix}.
	$$
	Then for $b>1/2$, $s+b>3/2$, it holds
	\begin{equation}\label{item3}
		\|\eta(n\tau)\mathbf{v}_n\|_{\xxsbt}\lesssim\|F_n\|_{X^{s-1,b-1}_\tau}.
	\end{equation}
\end{enumerate}
The analogous estimates in the continuous Bourgain spaces are also all true.
\end{lemma}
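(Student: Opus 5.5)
The strategy is to reduce each estimate to a fixed-frequency statement in $\xi$ and follow the continuous arguments of \cite[Section 2.3]{geba2016introduction}. Throughout, the restriction $|\xi|\le\pi/2\tau$ lets us replace $\dt(|\sg|\pm|\xi|)$ by $|\sg|\pm|\xi|$ up to constants.

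First, decompose $u_n$ into its positive and negative frequency parts. In the $\uu_n$ formulation this means the half-waves $\uu_\pm = u \pm i(\rd)^{-1}\dd_t u$, or in discrete form $\hat u_n^\pm(\xi)$ defined via $\LL$. For the $+$ piece, the modulation weight is comparable to $\lan \sg-|\xi|\ran$ when $\sg>0$, and to $\lan|\sg|+|\xi|\ran$ when $\sg<0$. In the second region both weights are comparable to $\lan|\sg|+|\xi|\ran$, so it is harmless, exactly as in the continuous case. Writing $\hat u_n(\xi)=e^{in\tau|\xi|}\hat w_n(\xi)$ then reduces the norm to a product of the $H^s$ weight and a one-dimensional discrete $H^b_\tau$ norm of $n\mapsto \hat w_n(\xi)$. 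Here $H^b_\tau$ has weight $\lan \dt(\sg)\ran^b$, periodic on $[-\pi/\tau,\pi/\tau]$, and the translation $\sg\mapsto\sg-|\xi|$ stays within $|\sg|\lesssim 1/\tau$ because $|\xi|\le\pi/2\tau$. This reduction is the main bookkeeping obstacle: the weights are not periodic in $\sg$, so I must check that after translation, on the relevant region, the weight is comparable to a periodic one. That check is where the frequency cutoff is used.

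With this reduction the four items are scalar sequence facts, to be applied pointwise in $\xi$ and then integrated.

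\textbf{(I)} For $b>1/2$, the bound $\sup_n|w_n|\le \int|\F_\tau w|\,d\sg\lesssim \|\lan\sg\ran^{b}\F_\tau w\|_{L^2}$ follows from the inversion formula \eqref{eqn:inversion} and Cauchy--Schwarz. For the second component, the discrete derivative $\dd_t u$ pairs with weight $|\xi|\lesssim \lan|\sg|+|\xi|\ran$ on the resonant region, which accounts for the loss of one derivative.

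\textbf{(II)} Multiplication by $\eta(n\tau)$ acts as convolution in $\sg$ with the periodization of $\hat\eta$, which decays rapidly. The claim then follows from Peetre's inequality $\lan a\ran^b\lesssim\lan a-c\ran^{|b|}\lan c\ran^b$, applied to both weights. The term $\lan\dt(|\sg|+|\xi|)\ran^s$ needs slightly more care, since $|\sg|$ can jump across $\sg=0$, but the same inequality applies to $\big||\sg|\pm|\xi|\big|$ because $\big||\sg|-|\sg'|\big|\le|\sg-\sg'|$.

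\textbf{(III)} Here $\eta(n\tau)e^{\pm in\tau|\xi|}\hat f(\xi)$ has transform equal to the periodized $\hat\eta(\sg\mp|\xi|)$. It is therefore concentrated at modulation $O(1)$, and the bound reduces to $\|\lan\dt\sg\ran^b\F_\tau(\eta(n\tau))\|_{L^2}\lesssim_\eta 1$. The $\lan|\sg|+|\xi|\ran^s$ factor contributes $\lan\xi\ran^s$, and the $\rd$ in the matrix $\LL$ accounts for the shift $s\mapsto s-1$; this is where $s\ge1$ is used, so that $\sin(t\rd)/\rd$ is harmless at low frequency.

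\textbf{(IV)} This is the analytic core. After conjugating by $e^{in\tau|\xi|}$, the term $\mathbf v_n$ becomes $\tfrac{1}{|\xi|}$ times a discrete primitive $\tau\sum_{m\le n}g_m$ for the $\pm$ pieces. The required estimate is the discrete analogue of $\|\eta(t)\int_0^t g\|_{H^b}\lesssim\|g\|_{H^{b-1}}$ for $1/2<b\le1$. I would prove it as in the continuous case by writing
$$\tau\sum_{m=0}^n g_m=\f{1}{2\pi}\int \F_\tau g(\sg)\,\f{e^{i(n+1)\tau\sg}-1}{e^{i\tau\sg}-1}\,\tau\, d\sg$$
and splitting into $|\sg|\le1$ and $|\sg|>1$. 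On $|\sg|\le1$, Taylor expand $\frac{e^{in\tau\sg}-1}{\dt\sg}$ into powers $(n\tau)^k\sg^{k-1}/k!$; each power, multiplied by $\eta$, is bounded in $H^b_\tau$ by $C^k/k!$, and the $\sg$-integral is controlled by Cauchy--Schwarz using $\lan\sg\ran^{b-1}$. On $|\sg|>1$, use $|\dt\sg|\simeq|\sg|$ and estimate the two resulting terms: one is a multiple of $\eta(n\tau)$, requiring $b<3/2$ for Cauchy--Schwarz, and the other via item (II). The factor $\tfrac{1}{|\xi|}$ versus $\lan\xi\ran^{-1}$ at low frequency is handled by bounding $\sin(t|\xi|)/|\xi|\lesssim t$ for $|\xi|\le1$; this is where the condition $s+b>3/2$ enters, as in \cite{geba2016introduction}.

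The continuous statements follow by the same computations with $\dt$ replaced by the identity.
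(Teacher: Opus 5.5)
Your arguments for (I)--(III) are acceptable as sketches. For (II), Peetre's inequality together with the fact that $\sigma\mapsto|\sigma|$ is $1$-Lipschitz for the distance on the circle $\mathbb{R}/(2\pi/\tau)\mathbb{Z}$ is a clean substitute for the paper's shifted-weight comparison near $\sigma=\pm\pi/\tau$.

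The gap is in the reduction that (IV) rests on. The weight $\langle d_\tau(|\sigma|+|\xi|)\rangle^{s}\langle d_\tau(|\sigma|-|\xi|)\rangle^{b}$ is \emph{not} the product of an $H^s$ weight and a one-dimensional $H^b_\tau$ weight in $\sigma\mp|\xi|$. For $|\sigma|\gg|\xi|$ it is $\simeq\langle\sigma\rangle^{s+b}$, not $\langle\xi\rangle^{s}\langle\sigma\rangle^{b}$. Consequently the two half-wave pieces of $\mathbf{v}_n$ cannot be estimated separately:
\begin{itemize}
\item Up to boundary terms, each piece has the multiplier $\bigl(2|\xi|\,d_\tau(\sigma\mp|\xi|)\bigr)^{-1}$, which is of size $(|\xi||\sigma|)^{-1}$ when $|\sigma|\gg|\xi|$.
\item Mapping $X^{s-1,b-1}_\tau$ into $X^{s,b}_\tau$ requires the gain $\langle d_\tau(|\sigma|+|\xi|)\rangle^{-1}\langle d_\tau(|\sigma|-|\xi|)\rangle^{-1}\simeq\langle\sigma\rangle^{-2}$ there.
\item For $F$ localized at $|\xi|\sim1$, $|\sigma|\sim\Lambda\le\pi/\tau$, each piece on its own therefore loses a factor $\Lambda$, which is unbounded as $\tau\to0$.
\end{itemize}
So your pointwise-in-$\xi$ primitive estimate proves a correct bound, but in the product-weight space $\langle\xi\rangle^s\langle\sigma\mp|\xi|\rangle^b$. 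That space does not control $X^{s,b}_\tau$ for $s>0$. Using the correct fixed-$\xi$ weight does not rescue a single half-wave either.

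The missing ingredient is the cancellation between the $\pm$ pieces away from the light cone:
\[
\frac{1}{d_\tau(\sigma-|\xi|)}-\frac{1}{d_\tau(\sigma+|\xi|)}=\frac{2i\tau e^{i\tau\sigma}\sin(\tau|\xi|)}{(e^{i\tau(\sigma-|\xi|)}-1)(e^{i\tau(\sigma+|\xi|)}-1)}.
\]
This makes the combined multiplier comparable to the inverse of the full discrete wave symbol, with the factor $\tau^{-1}\sin(\tau|\xi|)$ absorbing the $1/|\xi|$. It is exactly the paper's bound $|\widetilde{v}^{(3)}|\lesssim\langle d_\tau(|\sigma|+|\xi|)\rangle^{-1}\langle d_\tau(|\sigma|-|\xi|)\rangle^{-1}|\widetilde{F}|$.

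A half-wave, one-dimensional argument is legitimate only in two places:
\begin{itemize}
\item near the cone, where $|\sigma|\simeq|\xi|$ and the weight genuinely is of product type;
\item for the free-wave terms carrying $e^{\pm i(n+1)\tau|\xi|}$, which are treated as in (III).
\end{itemize}

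The free-wave terms are also where $s+b>3/2$ enters, not the low-frequency $1/|\xi|$ issue. The condition is what bounds the coefficient $|\xi|^{-1}\int\widetilde{F}(\sigma_1,\xi)\,d_\tau(\sigma_1\mp|\xi|)^{-1}\,d\sigma_1$ uniformly in $\tau$. For $|\sigma_1|\gg|\xi|$ the $X^{s-1,b-1}_\tau$ weight is only $\langle\sigma_1\rangle^{s+b-2}$, so Cauchy--Schwarz needs $\int\langle\sigma_1\rangle^{2-2s-2b}\,d\sigma_1<\infty$. Similarly, the boundary term in your one-dimensional argument requires $b>1/2$, not $b<3/2$.
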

Note that in contrast to \cite{ostermann2022fourier} we must impose a frequency truncation even to obtain the linear estimates above.

There is a slight subtlety in interpreting the inhomogeneous estimates in the discrete setting, in terms of how to define sums with negative limits. Our definition is as follows:
\begin{align}
	\sum_{k=0}^n f_k:=
	\begin{cases}
		f_0+f_1+\ldots+f_n &\text{if }n\geq0\\
		0&\text{if }n=-1\\
		-(f_{n+1}+f_{n+2}+\ldots+f_{-1})&\text{if }n\leq -2.
	\end{cases}\label{eqn:summation_convention}
\end{align}
We choose this convention so that the identity
\begin{equation}\label{eqn:geometric_prog}
	\sum_{k=0}^n r^k=\f{r^{n+1}-1}{r-1}
\end{equation}
holds for all $n\in\Z$, $r\neq1$. In applications, we will only study sums of functions supported in $\{k\geq0\}$.
\begin{proof}
	The continuous estimates are well-known, we will only prove the discrete versions.
	
(I) It suffices to show that $\|u_n\|_{\ell^\infty_nH^s_x}\lesssim\|u_n\|_{X^{s,b}_\tau}$ for any $s\geq0$. Fix $n\in\Z$. We use the partial inversion formula \eqref{eqn:inversion} and Cauchy-Schwarz to estimate
	\begin{align*}
		\|u_n\|_{H^s_x}^2&=\int_{\xi}\langle\xi\rangle^{2s}\biggl|\int_{-\pi/\tau}^{\pi/\tau}\tft(u_n)(\sg,\xi)e^{in\tau\sg}d\sg\biggr|^2d\xi\\
		&\lesssim \int_\xi\langle\xi\rangle^{2s}\biggl(\int_{-/\pi\tau}^{\pi/\tau} \langle\dt(|\sg|-|\xi|)\rangle^{2b}|\tft(u_n)(\sg,\xi)|^2d\sg\biggr)\biggl(\int_{-\pi/\tau}^{\pi/\tau}\langle\dt(|\sg|-|\xi|)\rangle^{-2b}d\sg\biggr)d\xi
	\end{align*}
	from which the result follows under the frequency constraint since $|\dt(|\sg|-|\xi|)|\gtrsim \bigl||\sg|-|\xi|\bigr|$.
	
	(II) First observe the elementary bound
	$$
	\lan \sg_1\ran^{-1}\lesssim\f{\lan\dt(|\sg|\pm|\xi|)\ran}{\lan\dt(|\sg-\sg_1|\pm|\xi|)\ran}\lesssim\lan\sg_1\ran,\hspace{3em} \text{ for }|\sg|,\,|\sg_1|\leq\pi/\tau,\ |\xi|\leq \pi/2\tau,
	$$
	which can be proved  by separately considering the cases $|\sg_1|\geq\pi/10\tau$ and $|\sg_1|\leq\pi/10\tau$.
	
	By the partial inversion formula we have
	\begin{equation}\label{eqn:point2}
	\|\eta(n\tau)u_n\|_{\xsbt}\leq\f{1}{2\pi}\int_{-\pi/\tau}^{\pi/\tau}|\F_{\tau}(\eta)(\sg_1)|\,\|e^{in\tau\sg_1}u_n\|_{X^{s,b}_\tau}\,d\sg_1,
	\end{equation}
	where
	\begin{align}
	&\|e^{in\tau\sg_1}u_n\|_{X^{s,b}_\tau}^2\nonumber\\
	&=\int_{\sg=-\pi/\tau}^{\pi/\tau}\int_\xi \langle \dt(|\sg|+|\xi|)\ran^{2s}\lan\dt(|\sg|-|\xi|)\ran^{2b}|\tft(u_n)(\sg-\sg_1,\xi)|^2d\sg d\xi\nonumber\\
	&\lesssim\lan\sg_1\ran^{2(|s|+|b|)}\int_{\sg=-\pi/\tau}^{\pi/\tau}\int_\xi\langle \dt(|\sg-\sg_1|+|\xi|)\ran^{2s}\lan\dt(|\sg-\sg_1|-|\xi|)\ran^{2b}|\tft(u_n)(\sg-\sg_1,\xi)|^2d\sg d\xi\nonumber\\
	&\lesssim\lan\sg_1\ran^{2(|s|+|b|)}\int_{\sg=-\pi/\tau-\sg_1}^{\pi/\tau-\sg_1}\int_\xi\langle \dt(|\sg|+|\xi|)\ran^{2s}\lan\dt(|\sg|-|\xi|)\ran^{2b}|\tft(u_n)(\sg,\xi)|^2d\sg d\xi.\label{eqn:shifted_Xsb_norm}
	\end{align}
	Suppose now $\sg_1\geq0$. Applying the estimate
	$$
	\lan\sg_1\ran^{-1}\lesssim \f{\lan\dt(|\sg|\pm|\xi|)\ran}{\lan\dt(|\sg+2\pi/\tau|\pm|\xi|)\ran}\lesssim\lan\sg_1\ran \quad(|\sg_1|\leq\pi/\tau,\ |\xi|\leq \pi/2\tau \text{ and } |\sg\pm\pi/\tau|\leq|\sg_1|)
	$$
	in the region $\sg\in[-\pi/\tau-\sg_1,-\pi/\tau]$, alongside the periodicity of $\tft(u_n)$, we see that
	\begin{equation}
	\|e^{in\tau\sg_1}u_n\|_{\xsbt}^2\lesssim\lan\sg_1\ran^{4(|s|+|b|)}\|u_n\|_{\xsbt}^2.\label{eqn:bound_on_shifted_Xsb}
	\end{equation}
	The same result holds for $\sg_1\leq 0$.
	Returning to \eqref{eqn:point2}, the proof is complete thanks to the rapid decay of $\tft(\eta)$,
	\begin{equation}\label{eqn:decay-F(eta)}
	|\F_\tau(\eta)(\sg_1)|\lesssim_N|\dt(\sg_1)|^{-N},
	\end{equation}
	which holds uniformly in $\tau$ \cite[Equation (41)]{ostermann2022fourier}.
	
	(III) 
	First take $\uu_0=(u_0,0)$. A direct computation shows
	$$
	\tft\bigl(\eta(n\tau)\LL_{n\tau}(u_0,0)\bigr)(\sg,\xi)=\f{1}{2i}\begin{pmatrix}
		i\bigl(\ft{\eta}(\sg+|\xi|)+\ft{\eta}(\sg-|\xi|)\bigr)\hat{u}_0(\xi)\\
		|\xi|\bigl(\ft{\eta}(\sg+|\xi|)-\ft{\eta}(\sg-|\xi|)\bigr)\hat{u}_0(\xi)\end{pmatrix}.
	$$
	Using \eqref{eqn:decay-F(eta)} and the fact that $|\xi|\leq\pi/2\tau$ we have
	\begin{align*}
		\|\eta(n\tau)\LL_{n\tau}(u_0,0)\|_{\xxsbt}
		&\lesssim\|\lan\dt(|\sg|+|\xi|)\ran^s\lan\dt(|\sg|-|\xi|)\ran^b\ft\eta(\sg\pm|\xi|)\hat{u}_0(\xi)\|_{L^2_{\sg,\xi}}\\
		&\lesssim_N\|\lan\dt(|\sg|+|\xi|)\ran^s\lan\dt(|\sg|-|\xi|)\ran^b\lan\dt(\sg\pm|\xi|)\ran^{-N}\hat{u}_0(\xi)\|_{L^2_{\sg,\xi}}.
	\end{align*}
	Crudely bounding $\lan\dt(|\sg|+|\xi|)\ran\lesssim\lan\xi\ran\lan\dt(|\sg|-|\xi|)\ran$ and $|\dt(|\sg|\pm|\xi|)|\gtrsim|\dt(|\sg|-|\xi|)|$ in the support of the integral we obtain
	\begin{align*}
		\|\eta(n\tau)\LL_{n\tau}(u_0,0)\|_{\XX^{s,b}_{\tau}}&\lesssim_N\|\lan\xi\ran^s\lan\dt(|\sg|-|\xi|)\ran^{s+b-N}\hat{u}_0(\xi)\|_{L^2_{\sg,\xi}}.
	\end{align*}
	The result follows upon choosing $N$ sufficiently large.
	
	The case $\uu_0=(0,v_0)$ can be treated similarly, distinguishing the cases $|\xi|<\half$ and $|\xi|>\half$ in order to handle the singularity in the operator $\LL_{n\tau}$ at $\xi=0$.
	
	(IV) To avoid repetition, we will only study the first component of $\vv_n$:
	$$
	v_n=\tau\sum_{m=0}^n\f{\sin((n-m)\tau\rd)}{\rd}F_m(x)
	$$
	Writing $\hat{F}_m(\xi)$ using the partial inversion formula \eqref{eqn:inversion} and then applying the identity \eqref{eqn:geometric_prog}, we calculate
	\begin{align}
		&\hat{v}_n(\xi)=\tau\sum_{m=0}^n\f{e^{i(n-m)\tau|\xi|}-e^{-i(n-m)\tau|\xi|}}{2i|\xi|}\cdot\f{1}{2\pi}\int_{-\pi/\tau}^{\pi/\tau}\widetilde{F}(\sg_1,\xi)e^{im\tau\sg_1}d\sg_1\nonumber\\
		&=-\f{i}{4\pi}\int_{-\pi/\tau}^{\pi/\tau}\left[\f{(e^{i(n+1)\tau\sg_1}-e^{i(n+1)\tau|\xi|})e^{-i\tau|\xi|}}{d_\tau(\sg_1-|\xi|)}-\f{(e^{i(n+1)\tau\sg_1}-e^{-i(n+1)\tau|\xi|})e^{i\tau|\xi|}}{d_\tau(\sg_1+|\xi|)}\right]\f{\widetilde{F}(\sg_1,\xi)}{|\xi|}d\sg_1.\label{eqn:vn-hat}
	\end{align}
	
	Given this formula, and the restriction $|\xi|\leq \pi/2\tau$ which ensures that $|d_\tau(\sg_1\pm|\xi|)|\simeq |\sg_1\pm|\xi||$ we may now proceed almost exactly as in \cite[Theorem 2.10]{geba2016introduction}. 
	
	We decompose $v_n=v_n^{(1)}+v_n^{(2)}$, where
	$$
	\hat{v}_n^{(1)}(\xi)=-\f{i}{4\pi}\int_{\substack{|\sg_1|\leq\pi/\tau\\|\sg_1|+|\xi|<\half}}\ldots d\sg_1
	$$
	and $v_n^{(2)}$ is the remainder.
	
	We start by considering $v_n^{(1)}$. Again set $g(\sg):=\F_\tau(\eta(n\tau))(\sg)$. From \eqref{eqn:vn-hat} we have
	\begin{align*}
		\tft(\eta(n\tau)v_n^{(1)})(\sg,\xi)
		&=-\f{i}{4\pi}\int_{|\sg_1|+|\xi|<\half}\left[\f{e^{-i\tau|\xi|}(e^{i\tau\sg_1} g(\sg-\sg_1)-e^{i\tau|\xi|}g(\sg-|\xi|))}{d_\tau(\sg_1-|\xi|)}\right.\\
		&\hspace{9em}-\left.\f{e^{i\tau|\xi|}(e^{i\tau\sg_1} g(\sg-\sg_1)-e^{-i\tau|\xi|}g(\sg+|\xi|))}{d_\tau(\sg_1+|\xi|)}\right]\f{\widetilde{F}(\sg_1,\xi)}{|\xi|}d\sg_1.
	\end{align*}
	Thanks to the difference structure of the expression in brackets and the rapid decay of $g$ and its derivatives, we may bound
	\begin{align*}
	|\tft(\eta(n\tau)v_n^{(1)})(\sg,\xi)|\lesssim_N\int_{|\sg_1|+|\xi|<\half}|\xi|\lan\sg\ran^{-N}\f{\widetilde{F}(\sg_1,\xi)}{|\xi|}d\sg_1.
	\end{align*}
	for any $N>0$.
	Since $|\xi|<1/2<\pi/2\tau$, we have that $\langle d_{\tau}(|\sg|\pm|\xi|)\rangle\simeq\langle\sg\rangle$. It follows by Cauchy-Schwarz that
	\begin{align*}
		\|\eta(n\tau)v^{(1)}_n\|_{X^{s,b}_\tau}^2&\lesssim_N\int_{|\sg|<\pi/\tau}\int_{|\xi|<\half}\langle\sg\rangle^{2(s+b-N)}\biggl(\int_{|\sg_1|+|\xi|<\half}|\widetilde{F}(\sg_1,\xi)|\,d\sg_1\biggr)^2d\xi\,d\sg\\
		&\lesssim \int_{|\sg_1|+|\xi|<\half}|\widetilde{F}(\sg_1,\xi)|^2d\xi\,d\sg_1
	\end{align*}
	which is more than acceptable given the domain of integration.

	
	We now turn to $v_n^{(2)}$. Rewrite
	\begin{align*}
		\hat{v}_n^{(2)}(\xi)&=\f{i}{4\pi}\int_{|\sg_1|+|\xi|>\half}e^{in\tau\sg_1}\left[e^{i(\sg_1-|\xi|)\tau}\f{(1-e^{-i(n+1)\tau(\sg_1-|\xi|)})}{d_\tau(\sg_1-|\xi|)}\right.\\
		&\hspace{9em}-\left.e^{i(\sg_1+|\xi|)\tau}\f{(1-e^{-i(n+1)\tau(\sg_1+|\xi|)})}{d_\tau(\sg_1+|\xi|)}\right]\f{\widetilde{F}(\sg_1,\xi)}{|\xi|}d\sg_1
	\end{align*}
	then further decompose $v_n^{(2)}=v_n^{(3)}+v_n^{(4)}$ with
	\begin{align}
		\hat{v}_n^{(3)}(\xi)&=\f{i}{4\pi}\int_{|\sg_1|+|\xi|>\half}e^{in\tau\sg_1}\left[e^{i(\sg_1-|\xi|)\tau}\f{(1-\chi_{(0,\infty)}(\sg_1)a(|\sg_1|-|\xi|))}{d_\tau(\sg_1-|\xi|)}\right.\nonumber\\
		&\hspace{9em}-\left.e^{i(\sg_1+|\xi|)\tau}\f{(1-\chi_{(-\infty,0)}(\sg_1)a(|\sg_1|-|\xi|))}{d_\tau(\sg_1+|\xi|)}\right]\f{\widetilde{F}(\sg_1,\xi)}{|\xi|}d\sg_1\label{eqn:int}
	\end{align}
	where $a\in C_c^\infty(\R)$ is supported on $(-1/4,1/4)$ and equal to one in a neighborhood of zero, and $\chi$ denotes a sharp indicator function. Then
	\begin{align*}
		\widetilde{v}^{(3)}(\sg,\xi)&=\f{i}{2}\chi_{|\sg|+|\xi|>\half}\left[e^{i\tau(\sg-|\xi|)}\f{(1-\chi_{(0,\infty)}(\sg)a(|\sg|-|\xi|))}{d_\tau(\sg-|\xi|)}\right.\\
		&\hspace{7em}-\left.e^{i\tau(\sg+|\xi|)}\f{(1-\chi_{(-\infty,0)}(\sg)a(|\sg|-|\xi|))}{d_\tau(\sg+|\xi|)}\right]\f{\widetilde{F}(\sg,\xi)}{|\xi|}.
	\end{align*}
	We will show that
	$$
	|\widetilde{v}^{(3)}_n(\sg,\xi)|\lesssim\lan d(|\sg|+|\xi|)\ran^{-1}\lan d(|\sg|-|\xi|)\ran^{-1}|\widetilde{F}(\sg,\xi)|,
	$$
	which is sufficient given that multiplication by $\eta(n\tau)$ is continuous on $X^{s,b}_\tau$ by \ref{item:II}.
	
	Without loss of generality we restrict to $\sg>0$. For $|\sg|+|\xi|>\half$,
	\begin{align*}
		\widetilde{v}^{(3)}_n(\sg,\xi)&=\left[e^{i\tau(\sg-|\xi|)}\f{(1-a(\sg-|\xi|))}{d_\tau(\sg-|\xi|)}-\f{e^{i\tau(\sg+|\xi|)}}{d_\tau(\sg+|\xi|)}\right]\f{\widetilde{F}(\sg,\xi)}{|\xi|}.
	\end{align*}
	When $|\sg-|\xi||>1/4$, $a(\sg-|\xi|)=0$, and we find
	\begin{align*}
		&|\widetilde{v}^{(3)}_n(\sg,\xi)|\\
		&\lesssim\Bigl(\Bigl|\cos(\tau|\xi|)\Bigl(\f{1}{d_\tau(\sg-|\xi|)}-\f{1}{d_\tau(\sg+|\xi|)}\Bigr)\Bigr|
		+\Bigl|\sin(\tau|\xi|)\Bigl(\f{1}{d_\tau(\sg-|\xi|)}+\f{1}{d_\tau(\sg+|\xi|)}\Bigr)\Bigr|\Bigr)\f{|\widetilde{F}(\sg,\xi)|}{|\xi|}
	\end{align*}
	Then
	\begin{align*}
		\f{1}{d_\tau(\sg-|\xi|)}-\f{1}{d_\tau(\sg+|\xi|)}&=2i\tau e^{i\tau\sg}\f{\sin(\tau|\xi|)}{(e^{i\tau(\sg-|\xi|)}-1)(e^{i\tau(\sg+|\xi|)}-1)}
	\end{align*}
	and since $|\xi|<\pi/2\tau$, $|\sg|\pm|\xi|>1/4$, it follows that
	$$
	\left|\f{1}{d_\tau(\sg-|\xi|)}-\f{1}{d_\tau(\sg+|\xi|)}\right|\lesssim\f{\tau^{-1}\sin(\tau|\xi|)}{\lan d_\tau(\sg+|\xi|)\ran \lan d_\tau(\sg-|\xi|)\ran}
	$$
	as required. 
	A similar estimate for the $\sin$ term completes the case $\sg>0$, $|\sg-|\xi||\geq1/4$.
	
	For $|\sg-|\xi||<1/4$, we use the smoothness of $a$ to estimate
	$$
	|1-a(\sg-|\xi|)|\lesssim|\sg-|\xi||.
	$$
	Then under the assumption $\sg+|\xi|>1/2$, we have $\lan d_\tau(|\sg|+|\xi|)\ran\simeq|\xi|$ and $\lan d_\tau(|\sg|-|\xi|)\ran\simeq1$ and we infer that
	\begin{align*}
		|\widetilde{v}^{(3)}_n(\sg,\xi)|&\lesssim\left(\f{|\sg-|\xi||}{|d_\tau(\sg-|\xi|)|}+\f{1}{|d_\tau(\sg+|\xi|)|}\right)\f{|\widetilde{F}(\sg,\xi)|}{|\xi|}\lesssim\f{|\widetilde{F}(\sg,\xi)|}{\lan d_\tau(|\sg|+|\xi|)\ran\lan d_\tau(|\sg|-|\xi|)\ran}.
	\end{align*}
	This completes the study of $v^{(3)}$.
	
	We omit the analysis of the remaining term $v^{(4)}_n$, for which the argument is similar to that in the continuous case \cite{geba2016introduction} with modifications as above. \qedhere

\end{proof}

The next result shows that we have the expected behavior of the d'Alembertian on the discrete Bourgain spaces, provided we impose a strong frequency restriction.
\begin{lemma}\label{lem4}
Let $u_n\in X^{s,b}_\tau$ with Fourier support contained in $\{|\xi|\leq\pi/(2\tau^{1/2})\}$. For any $s,b\in\R$ it holds
$$
\|\Box_\tau u_n\|_{X^{s-1,b-1}_\tau}\lesssim_{s,b}\|u_n\|_{X^{s,b}_\tau}.
$$
\end{lemma}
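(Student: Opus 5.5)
Since $\Box_\tau$ is a Fourier multiplier in $(\sg,\xi)$, the estimate reduces to a pointwise bound on its symbol. For the time part we compute $\tft\bigl(\f{u_n-2u_{n-1}+u_{n-2}}{\tau^2}\bigr)(\sg,\xi)=\f{(1-e^{-i\tau\sg})^2}{\tau^2}\tft(u_n)=e^{-2i\tau\sg}\dt(\sg)^2\,\tft(u_n)$. Hence the symbol of $\Box_\tau$ is
$$
m_\tau(\sg,\xi)=-e^{-2i\tau\sg}\dt(\sg)^2+|\xi|^2 .
$$
Because the weights in \eqref{eqn:Bourgain_norm} are multiplicative, the lemma then follows from Plancherel once we show
$$
|m_\tau(\sg,\xi)|\lesssim \lan\dt(|\sg|+|\xi|)\ran\,\lan\dt(|\sg|-|\xi|)\ran
$$
for $|\sg|\le\pi/\tau$ and $|\xi|\le\pi/(2\tau^{1/2})$. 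This bound holds for every $s,b$.

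To prove the pointwise bound, write $|\dt(\sg)|=\f{2}{\tau}|\sin(\tau\sg/2)|=:\omega_\tau(\sg)$, so $\dt(\sg)^2=-e^{i\tau\sg}\omega_\tau(\sg)^2$. Then
$$
m_\tau=e^{-i\tau\sg}\omega_\tau(\sg)^2+|\xi|^2=\bigl(|\xi|^2-\omega_\tau(\sg)^2\bigr)+(1+e^{-i\tau\sg})\omega_\tau(\sg)^2 .
$$
The first term factors as $(|\xi|-\omega_\tau)(|\xi|+\omega_\tau)$. Since $\omega_\tau(\sg)\simeq|\sg|$ on $[-\pi/\tau,\pi/\tau]$ and $|\xi|\le\pi/2\tau$, we have $|\xi|+\omega_\tau\lesssim\lan\dt(|\sg|+|\xi|)\ran$.

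For $|\xi|-\omega_\tau$, we compare with $|\xi|-|\sg|$. On the region $|\sg|\le 2|\xi|+1$ we have $\bigl||\sg|-\omega_\tau(\sg)\bigr|\lesssim\tau^2|\sg|^3\lesssim \tau^2\lan\xi\ran^3$. This is where the stronger restriction $|\xi|\lesssim\tau^{-1/2}$ enters: it makes this error $\lesssim\tau^{1/2}\lesssim 1$, so $\bigl||\xi|-\omega_\tau\bigr|\lesssim\lan|\sg|-|\xi|\ran$. On the complementary region $|\sg|>2|\xi|+1$, both modulation weights are $\simeq\lan\sg\ran$, and $|m_\tau|\lesssim\sg^2$ trivially.

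The second term, $(1+e^{-i\tau\sg})\omega_\tau^2$, is the source of the main obstacle. It has size $\lesssim|\sg|^2$, and when $|\sg|\le2|\xi|+1$ we must bound it by $\lan|\sg|+|\xi|\ran\lan|\sg|-|\xi|\ran$, which near the cone is only of size $|\xi|$. I would first try to redo the algebra, writing $e^{-i\tau\sg}=1-i\tau\sg+O(\tau^2\sg^2)$. The $O(1)$ part is already absorbed in the factorization, so in fact $m_\tau=|\xi|^2-\omega_\tau^2+(e^{-i\tau\sg}-1)\omega_\tau^2$. The leftover is $\lesssim\tau|\sg|^3\lesssim\tau\lan\xi\ran^3$. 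Using $|\xi|\lesssim\tau^{-1/2}$, this is $\lesssim\lan\xi\ran^2\tau\lan\xi\ran\lesssim\tau^{1/2}\lan\xi\ran^2$, which is still too large near the cone. The way around this is to exploit that $\lan\dt(|\sg|+|\xi|)\ran\lan\dt(|\sg|-|\xi|)\ran\ge\lan\xi\ran$, so we need the leftover to be $\lesssim\lan\xi\ran$. This requires $\tau|\xi|^3\lesssim|\xi|$, i.e. $|\xi|\lesssim\tau^{-1/2}$, which is exactly the hypothesis of the lemma.

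So the decisive step, and the only place the $\tau^{-1/2}$ cutoff is used, is controlling the $O(\tau\sg^3)$ and $O(\tau^2\sg^3)$ discretization errors near the light cone by $|\xi|$. Far from the cone, the estimate is routine, as sketched above.
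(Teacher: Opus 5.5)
Your argument is correct and is essentially the paper's proof. Both reduce to the pointwise symbol bound $|m_\tau|\lesssim\langle d_\tau(|\sigma|+|\xi|)\rangle\langle d_\tau(|\sigma|-|\xi|)\rangle$, and both use $|\xi|\lesssim\tau^{-1/2}$ only to absorb the $O(\tau|\sigma|^3)$ discretization error near the cone into $\langle\xi\rangle$; the paper Taylor-expands $|\xi|^2+d_\tau(-\sigma)^2$ over three regimes, while you use the exact $\omega_\tau$ over two.

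One slip needs fixing. The symbol is $e^{-2i\tau\sigma}d_\tau(\sigma)^2+|\xi|^2=|\xi|^2-e^{-i\tau\sigma}\omega_\tau(\sigma)^2$, with no minus sign in front. So the term $(1+e^{-i\tau\sigma})\omega_\tau^2$, which is $\approx 2|\xi|^2$ on the cone and would genuinely break the estimate, never appears. The decomposition $(|\xi|^2-\omega_\tau^2)+(1-e^{-i\tau\sigma})\omega_\tau^2$ that you end up using then follows directly, rather than from your unjustified ``redo the algebra'' step.
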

\begin{proof}
We write
$$
\F_\tau(\Box_\tau u_n)(\sg,\xi)=(|\xi|+id_\tau(-\sg))(|\xi|-id_\tau(-\sg))\widetilde{u}_n(\sg,\xi)
$$
where for $|\sg|\lesssim\tau^{-1}$, $|\xi|\lesssim\tau^{-1/2}$, Taylor expansion yields
$$
|(|\xi|+i\,d_\tau(-\sg))(|\xi|-i\,d_\tau(-\sg))|\lesssim(|\xi|^2-\sg^2)+\sg^2\tau(|\sg|+|\xi|)+(\sg^2\tau)^2.
$$
The first term here is readily seen to be acceptable. For the remaining two we must separately work in the regimes $|\sg|\leq|\xi|/2$, $|\xi|/2<|\sg|<2|\xi|$ and $|\sg|\geq2|\xi|$. The first case is straightforward to handle since
$$
\sg^2\tau\lesssim|\sg|\lesssim|\xi|\lesssim \lan\dt(|\sg|\pm|\xi|)\ran
$$
and the third case is similar. For the remaining case we bound
$$
\sg^2\tau\lesssim\sg^2/|\xi|^2\lesssim 1\lesssim \lan\dt(|\sg|\pm|\xi|)\ran.
$$
This completes the proof.
\end{proof}

The remaining results in this section are directly related to the local error analysis (Section \ref{sec:local-error}), demonstrating how regularity can be traded for convergence.
\begin{lemma}\label{linear_lemma}
Let $s_1\leq s$, $s-s_1\leq 1$ and $b\in\R$. Then for any $|t|<\tau$ it holds
$$
\|\Pi_{\pi/2\tau}(\LL_t-I)\|_{\xxsbt\rightarrow\XX^{s_1,b}_\tau}\lesssim_{s,s_1,b}\tau^{s-s_1}.
$$
In particular,
$$
\|\Pi_{\pi/2\tau}\LL_t\|_{\xxsbt\rightarrow\XX^{s,b}_\tau}\lesssim_{s,b}1.
$$
\end{lemma}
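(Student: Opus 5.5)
The operator $\Pi_{\pi/2\tau}(\LL_t-I)$ acts only in the spatial variable $x$, uniformly in $n$. Hence it commutes with the discrete time Fourier transform, and on the semidiscrete Fourier side it is multiplication by the $2\times2$ matrix symbol
$$
M_t(\xi)=\chi\Bigl(\tfrac{\xi}{\pi/2\tau}\Bigr)\begin{pmatrix}\cos(t|\xi|)-1 & \f{\sin(t|\xi|)}{|\xi|}\\ -|\xi|\sin(t|\xi|) & \cos(t|\xi|)-1\end{pmatrix},
$$
which depends only on $\xi$. The plan is to bound this symbol pointwise, entry by entry, against the ratio of the weights defining $\XX^{s,b}_\tau$ and $\XX^{s_1,b}_\tau$. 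The modulation weight $\lan\dt(|\sg|-|\xi|)\ran^b$ is the same on both sides and passes through untouched, which is why $b$ is arbitrary. So everything reduces to controlling the ratio of the weights $\lan\dt(|\sg|+|\xi|)\ran^{s_1}/\lan\dt(|\sg|+|\xi|)\ran^{s}$, with the appropriate shifts by $1$ for the off-diagonal entries.

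The key steps are as follows.

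\emph{Step 1.} Record the elementary bounds, valid for $|t|<\tau$:
$$
|\cos(t|\xi|)-1|\lesssim\min(1,\tau|\xi|)^{s-s_1},\qquad |\sin(t|\xi|)|\lesssim \min(1,\tau|\xi|)^{s-s_1},
$$
$$
\Bigl|\f{\sin(t|\xi|)}{|\xi|}\Bigr|\lesssim \tau\min(1,\tau|\xi|)^{s-s_1-1}\ \text{ for } |\xi|\gtrsim\tau^{-1},\qquad \Bigl|\f{\sin(t|\xi|)}{|\xi|}\Bigr|\lesssim\tau \ \text{ always}.
$$
These use $0\le s-s_1\le1$ and interpolation between $|1-\cos x|\le\min(2,x^2/2)$ and $|\sin x|\le\min(1,x)$.

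\emph{Step 2.} Use the frequency support $|\xi|\le\pi/2\tau$ together with $\sg\in[-\pi/\tau,\pi/\tau]$, so that $\lan\dt(|\sg|+|\xi|)\ran\simeq\lan\sg\ran+\lan\xi\ran$. Then split into the regimes $|\sg|\lesssim|\xi|$ and $|\sg|\gg|\xi|$.

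\emph{Step 3.} Handle the diagonal entries, which map $X^{s,b}_\tau\to X^{s_1,b}_\tau$ (and similarly $X^{s-1,b}_\tau\to X^{s_1-1,b}_\tau$). Here we need
$$
|\cos(t|\xi|)-1|\lesssim\tau^{s-s_1}\lan\dt(|\sg|+|\xi|)\ran^{s-s_1},
$$
which follows from Step 1 since $\tau|\xi|\le\tau\lan\xi\ran\lesssim\tau\lan\dt(|\sg|+|\xi|)\ran$.

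\emph{Step 4.} Handle the off-diagonal entries, which change regularity by one. The $(1,2)$ entry maps $X^{s-1,b}_\tau\to X^{s_1,b}_\tau$, so the required bound is
$$
\f{|\sin(t|\xi|)|}{|\xi|}\lan\dt(|\sg|+|\xi|)\ran^{s_1-s+1}\lesssim\tau^{s-s_1}.
$$
This is where care is needed, because of the singularity at $\xi=0$ and because the weight contains $\lan\sg\ran$, which can be much larger than $|\xi|$ in the regime $|\sg|\gg|\xi|$. I would write $|\sin(t|\xi|)|/|\xi|\le|t|\min(1,(\tau|\xi|)^{-1})\lesssim\tau$. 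The weight factor has exponent $1-(s-s_1)\in[0,1]$, and $\lan\dt(|\sg|+|\xi|)\ran\lesssim\tau^{-1}$, so the weight factor is at most $\tau^{-(1-(s-s_1))}$. This gives the total bound $\tau\cdot\tau^{s-s_1-1}=\tau^{s-s_1}$. The point is that the crude bound $\lan\cdot\ran\lesssim\tau^{-1}$ on the bounded $\sg$-domain rescues the large-$\sg$ regime. The $(2,1)$ entry, $|\xi|\sin(t|\xi|)$, maps $X^{s,b}_\tau\to X^{s_1-1,b}_\tau$ and is treated symmetrically: bound $|\xi|\,|\sin(t|\xi|)|\lesssim|\xi|\min(1,\tau|\xi|)^{s-s_1}$ and absorb the factor $|\xi|$ into the one-power loss, using $|\xi|\lesssim\lan\dt(|\sg|+|\xi|)\ran$.

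\emph{Step 5.} Conclude with Parseval in the norm \eqref{eqn:Bourgain_norm}. The ``in particular'' statement follows by the triangle inequality, taking $s_1=s$, together with the boundedness of the cutoff $\Pi_{\pi/2\tau}$.

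I expect the main obstacle to be the off-diagonal $(1,2)$ entry. Its treatment depends on the bounded $\sg$-range of the discrete setting, which is used to handle large $\sg$. Near $\xi=0$, one must also check that $\lan\cdot\ran$ (rather than a homogeneous weight) prevents any loss, since $|\sin(t|\xi|)|/|\xi|\le|t|$ and the weight is at most $\tau^{-1}$.
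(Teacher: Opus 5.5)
Your proposal is correct and is essentially the paper's proof: an entrywise pointwise bound on the Fourier symbol of $\LL_t-I$. The diagonal and $(2,1)$ entries are absorbed using $|\xi|\lesssim\langle d_\tau(|\sigma|+|\xi|)\rangle$. The $(1,2)$ entry is handled by $|\sin(t|\xi|)|/|\xi|\le|t|$ together with the crude bound $\langle d_\tau(|\sigma|+|\xi|)\rangle\lesssim\tau^{-1}$. The only differences are cosmetic: you interpolate via $\min(1,\tau|\xi|)^{s-s_1}$ where the paper uses Taylor bounds plus the cutoff $|\xi|\le\pi/2\tau$, and the regime split $|\sigma|\lesssim|\xi|$ versus $|\sigma|\gg|\xi|$ in your Step 2 is never actually needed.
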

\begin{proof}
Observe that the operator $\LL_t-I$ corresponds to the matrix
$$
\begin{pmatrix}
	\cos(t|\xi|)-1 & \sin(t|\xi|)/|\xi| \\
	-|\xi|\sin(t|\xi|) & \cos(t|\xi|)-1
\end{pmatrix}
$$
in Fourier space. This is bounded componentwise by
\begin{equation}\label{bound}
	\begin{pmatrix}
		|t\xi|^2 & |t| \\
		|t||\xi|^2 & |t\xi|^2
	\end{pmatrix}.
\end{equation}
It follows that
\begin{align}
	\|\Pi_{\pi/2\tau}(\LL_t-I)\uu_n\|_{\XX^{s_1,b}_\tau}^2
	&\lesssim\int_{-\pi/\tau}^{\pi/\tau}\int_{|\xi|\leq\pi/2\tau}\langle d_\tau(|\sg|+|\xi|)\rangle^{2s_1}d_\tau(|\sg|-|\xi|)\rangle^{2b}|t\xi|^4|\widetilde{u}_n(\sg,\xi)|^2\,d\sg d\xi\label{A}\\
	&\quad+\int_{-\pi/\tau}^{\pi/\tau}\int_{|\xi|\leq\pi/2\tau}\langle d_\tau(|\sg|+|\xi|)\rangle^{2s_1}d_\tau(|\sg|-|\xi|)\rangle^{2b}|t|^2|\widetilde{v}_n(\sg,\xi)|^2\,d\sg d\xi\label{B}\\
	&\quad+\int_{-\pi/\tau}^{\pi/\tau}\int_{|\xi|\leq\pi/2\tau}\langle d_\tau(|\sg|+|\xi|)\rangle^{2(s_1-1)}d_\tau(|\sg|-|\xi|)\rangle^{2b}|t|^2|\xi|^4|\widetilde{u}_n(\sg,\xi)|^2\,d\sg d\xi\label{C}\\
	&\quad+\int_{-\pi/\tau}^{\pi/\tau}\int_{|\xi|\leq\pi/2\tau}\langle d_\tau(|\sg|+|\xi|)\rangle^{2(s_1-1)}d_\tau(|\sg|-|\xi|)\rangle^{2b}|t\xi|^4|\widetilde{v}_n(\sg,\xi)|^2\,d\sg d\xi.\label{D}
\end{align}
For the first term we estimate
$$
|t\xi|^4\lesssim |t|^4 |\xi|^{4-2(s-s_1)}|\xi|^{2(s-s_1)}\lesssim \tau^{2(s-s_1)}|\xi|^{2(s-s_1)}\lesssim\tau^{2(s-s_1)}\lan\dt(|\sg|+|\xi|)\ran^{2(s-s_1)}
$$
which is acceptable.

For \eqref{B} we simply bound $\dt(|\sg|+|\xi|)$ by $\tau^{-1}$ to obtain
$$
\langle d_\tau(|\sg|+|\xi|)\rangle^{2s_1}|t|^2\lesssim \langle d_\tau(|\sg|+|\xi|)\rangle^{2(s-1)}\tau^{2(s-s_1)}.
$$
For \eqref{C} we use
$$
|t|^2|\xi|^4\lesssim |t|^2 |\xi|^{2-2(s-s_1)}|\xi|^{2(1+s-s_1)}\lesssim \tau^{2(s-s_1)}\lan\dt(|\sg|+|\xi|)\ran^{2(1+s-s_1)}
$$
and \eqref{D} is similar.
\end{proof}

Next we have a lemma controlling the embedding of continuous into discrete Bourgain spaces. 
\begin{lemma}\label{cts_dis_lem}
	Let $u$ be a function on $\R\x\R^d$ restricted to frequencies $|\xi|\leq \pi/2\tau$, $s>0$, $b>1/2$. Define $u_n(\cdot):=u(n\tau,\cdot)$. Then
	$$
	\|u_n\|_{X^{s,b}_{\tau}}\lesssim_{s,b}\|u\|_{X^{s,b}}.
	$$
	More generally, it holds
	$$
	\sup_{\theta\in\R}\|u(n\tau+\tht,\cdot)\|_{\xsbt}\lesssim_{s,b}\|u\|_{\xsbt}.
	$$
\end{lemma}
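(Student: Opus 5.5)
The plan is to use Poisson summation to compute the discrete Fourier transform of the sampled sequence, and then bound the resulting periodized sum by Cauchy--Schwarz.

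\textbf{Step 1: the formula.} Since $u_n=u(n\tau+\tht,\cdot)$, Poisson summation in time gives, for $\sg\in[-\pi/\tau,\pi/\tau]$,
$$
\tft(u_n)(\sg,\xi)=\sum_{k\in\Z}e^{i\tht(\sg+2\pi k/\tau)}\,\widetilde{u}(\sg+2\pi k/\tau,\xi).
$$
The phase $e^{i\tht(\cdot)}$ is harmless, which is why the bound will be uniform in $\tht$. It therefore suffices to treat $\tht=0$ and to control the weighted $L^2$ norm of this periodized sum. (I read the right-hand side of the second claim as $\|u\|_{X^{s,b}}$.)

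\textbf{Step 2: Cauchy--Schwarz in $k$.} Write $\sg_k=\sg+2\pi k/\tau$ and set $w(\sg',\xi)=\lan|\xi|+|\sg'|\ran^{s}\lan|\sg'|-|\xi|\ran^{b}$, the continuous weight. Then
$$
|\tft(u_n)(\sg,\xi)|^2\le\Bigl(\sum_k w(\sg_k,\xi)^{-2}\Bigr)\Bigl(\sum_k w(\sg_k,\xi)^2|\widetilde{u}(\sg_k,\xi)|^2\Bigr).
$$
Multiplying by the discrete weight $W(\sg,\xi)^2:=\lan\dt(|\sg|+|\xi|)\ran^{2s}\lan\dt(|\sg|-|\xi|)\ran^{2b}$ and integrating over $\sg\in[-\pi/\tau,\pi/\tau]$ reassembles $\|u\|_{X^{s,b}}^2$ from the second factor. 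The lemma is thus reduced to the uniform bound
$$
\sup_{|\sg|\le\pi/\tau,\ |\xi|\le\pi/2\tau}W(\sg,\xi)^2\sum_{k}w(\sg_k,\xi)^{-2}\lesssim 1.
$$

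\textbf{Step 3: the uniform bound.} On the region considered, $W(\sg,\xi)\simeq w(\sg,\xi)$ because $|\dt(x)|\simeq|x|$ there. Hence the $k=0$ term is $O(1)$.

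For $k\neq0$, the restrictions $|\sg|\le\pi/\tau$ and $|\xi|\le\pi/2\tau$ give $|\sg_k|\ge(2|k|-1)\pi/\tau$, so $|\sg_k|-|\xi|\gtrsim|k|/\tau$. Consequently
$$
w(\sg_k,\xi)\gtrsim(|k|/\tau)^{s+b},
$$
while trivially $W\lesssim\tau^{-(s+b)}$. The $k\neq0$ terms therefore sum to $\lesssim\sum_{k\ne0}|k|^{-2(s+b)}$, which converges since $s>0$ and $b>1/2$ give $2(s+b)>1$.

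\textbf{Main obstacle.} The only delicate point is this aliasing estimate: one has to check that the loss in the weight $W$ at $|\sg|\sim\tau^{-1}$ is exactly compensated by the gain in the shifted weights. The frequency restriction $|\xi|\le\pi/2\tau$ is what makes this work, because it keeps $|\sg_k|-|\xi|\gtrsim|k|/\tau$ for $k\neq0$; without it the shifted copies could sit near the light cone.
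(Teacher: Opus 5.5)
Your proposal is correct and follows essentially the same route as the paper: Poisson summation, then Cauchy--Schwarz over the aliased copies $k\neq0$, with the crude bound $\lan\dt(\cdot)\ran\lesssim\tau^{-1}$ cancelling the $\tau$-gain from the shifted continuous weights. The difference is only bookkeeping. The paper puts just the modulation weight into Cauchy--Schwarz, which needs $b>1/2$. It then controls $\lan\dt(|\sg|+|\xi|)\ran^{s}$ via the $2\pi/\tau$-periodicity of $\dt$ and $|\dt(x)|\le|x|$. You absorb the full weight at once, which needs only $s+b>1/2$ and also covers the $k=0$ term in the same estimate.
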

\begin{proof}
	By definition,
	$$
	\tft(u_n)(\sg,\xi)=\tau\sum_{n\in\Z}e^{-in\tau\sg}\hat{u}(n\tau,\xi).
	$$
	By the Poisson summation formula this can be expressed as
	\begin{equation}\label{eqn:poisson_sum}
		\tft(u_n)(\sg,\xi)=\sum_{n\in\Z}\widetilde{u}(\sg+\tfrac{2\pi}{\tau}n,\xi),
	\end{equation}
	so we have to estimate
	$$
	\int_{\sg=-\pi/\tau}^{\pi/\tau}\bigl|\sum_{n\in\Z}\lan\dt(|\sg|+|\xi|)\ran^{s}\lan\dt(|\sg|-|\xi|)\ran^{b}\widetilde{u}(\sg+\tfrac{2\pi}{\tau}n,\xi)\bigr|^2\,d\sg\,d\xi.
	$$
	The $n=0$ contribution is clearly acceptable. For the remaining portion of the sum, we estimate
	\begin{align*}
		&\Bigl|\sum_{n\neq0}\lan\dt(|\sg|-|\xi|)\ran^{b}\widetilde{u}(\sg+\tfrac{2\pi}{\tau}n,\xi) \Bigr|^2\\
		&\lesssim\Bigl(\sum_{n\neq0} \lan|\sg+\tfrac{2\pi}{\tau}n|-|\xi|\ran^{-2b}\Bigr) \Bigl(\sum_{n\neq0}\lan\dt(|\sg|-|\xi|)\ran^{2b}\lan|\sg+\tfrac{2\pi}{\tau}n|-|\xi|\ran^{2b}|\widetilde{u}(\sg+\tfrac{2\pi}{\tau}n,\xi)|^2 \Bigr)
	\end{align*}
	The first sum converges since $b>1/2$, and it is $O(\tau^{2b})$. In the second sum we use the crude bound $|\dt(|\sg|-|\xi|)|\lesssim\tau^{-1}$, and deduce
	\begin{align*}
		\Bigl|\sum_{n\neq0}\lan\dt(|\sg|-|\xi|)\ran^{b}\widetilde{u}(\sg+\tfrac{2\pi}{\tau}n,\xi) \Bigr|^2\lesssim \sum_{n\neq0}\lan|\sg+\tfrac{2\pi}{\tau}n|-|\xi|\ran^{2b}|\widetilde{u}(\sg+\tfrac{2\pi}{\tau}n,\xi)|^2.
	\end{align*}
	Integrating against the appropriate weight we therefore obtain
	\begin{align}
		&\int_{\sg=-\pi/\tau}^{\pi/\tau}\bigl|\sum_{n\neq0}\lan\dt(|\sg|+|\xi|)\ran^{s}\lan\dt(|\sg|-|\xi|)\ran^{b}\widetilde{u}(\sg+\tfrac{2\pi}{\tau}n,\xi)\bigr|^2\,d\sg\,d\xi\nonumber\\
		&\lesssim\sum_{n\neq0}\int_{-\pi/\tau}^{\pi/\tau}\int_{|\xi|\leq\pi/2\tau}\langle \dt(|\sg|+|\xi|)\ran^{2s}\lan|\sg+\tfrac{2\pi}{\tau}n|-|\xi|\ran^{2b}|\widetilde{u}(\sg+\tfrac{2\pi}{\tau}n,\xi)|^2d\sg d\xi\nonumber\\
		&= \sum_{n\neq0}\int_{(2n-1)\pi/\tau}^{(2n+1)\pi/\tau}\int_{|\xi|\leq\pi/2\tau}\lan\dt(|\sg-\tfrac{2\pi}{\tau}n|+|\xi|)\ran^{2s}\lan|\sg|-|\xi|\ran^{2b}|\widetilde{u}(\sg,\xi)|^2d\sg d\xi\label{eqn:sum_bound}
	\end{align}
	By the periodicity of $\dt$,
	$$
	\lan\dt(|\sg-\tfrac{2\pi}{\tau}n|+|\xi|)\ran=\lan\dt(\pm\sg+|\xi|)\ran\lesssim\lan|\sg|+|\xi|\ran,
	$$
	so
	$$
	\eqref{eqn:sum_bound}\lesssim\sum_{n\neq0}\int_{(2n-1)\pi/\tau}^{(2n+1)\pi/\tau}\int_{|\xi|\leq\pi/2\tau}\langle |\sg|+|\xi|\ran^{2s}\lan|\sg|-|\xi|\ran^{2b}|\widetilde{u}(\sg,\xi)|^2d\sg d\xi=\|u\|_{X^{s,b}}^2.
	$$
	The second statement of the lemma is immediate by translation invariance of the continuous Bourgain norm.
\end{proof}

We end this section with an initial estimate for the error produced by the discrete approximation of the wave operator in the nonlinearity.
\begin{lemma}\label{key_lemma}
	Let $u$ be a function defined on $\R\x\R^d$ restricted to frequencies $|\xi|\leq\pi/2\tau$ and set $u_n:=u(n\tau,\cdot)$. 
	Then for any $s,b,s_1,b_1$ with $s+b>3/2$, $b_1<1$, and $s_1>1$, it holds
	$$
	\|(\Box-\Box_\tau)u_n\|_{X^{s_1-1,b_1-1}_\tau}\lesssim_{s,s_1,b,b_1}\tau\|\dd_t^3u\|_{X^{s_1-1,b_1-1}}+\tau^{s-s_1+b}\|\dd_tu\|_{X^{s,b}}.
	$$
	Here $\Box u_n\equiv(\Box u)(n\tau,\cdot)$.
\end{lemma}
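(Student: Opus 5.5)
Since $\Box-\Box_\tau$ involves only time differences, we have
$$
(\Box-\Box_\tau)u_n=\dd_t^2u(n\tau)-\f{u(n\tau)-2u((n-1)\tau)+u((n-2)\tau)}{\tau^2},
$$
and the spatial Laplacians cancel. The plan is to compute the semidiscrete Fourier transform of this expression through the Poisson summation formula \eqref{eqn:poisson_sum}, exactly as in the proof of Lemma \ref{cts_dis_lem}. Writing $\widetilde{u}$ for the continuous spacetime transform, we get
$$
\tft\bigl((\Box-\Box_\tau)u_n\bigr)(\sg,\xi)=\sum_{k\in\Z}m_\tau\bigl(\sg+\tfrac{2\pi}{\tau}k\bigr)\,\widetilde{u}\bigl(\sg+\tfrac{2\pi}{\tau}k,\xi\bigr),\qquad m_\tau(\sg'):=-\sg'^2-\f{(1-e^{-i\tau\sg'})^2}{\tau^2},
$$
so that the problem reduces to controlling the symbol $m_\tau$ on the fundamental domain ($k=0$) and on its translates ($k\neq0$).

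\textbf{The $k=0$ term.} Here $|\sg'|\le\pi/\tau$, and Taylor expansion gives $|m_\tau(\sg')|\lesssim\tau|\sg'|^3$. We also have $\lan\dt(|\sg|\pm|\xi|)\ran\simeq\lan|\sg|\pm|\xi|\ran$ because $|\xi|\le\pi/2\tau$, so the discrete weights may be replaced by continuous ones. This contribution is then bounded by $\tau\|\dd_t^3u\|_{X^{s_1-1,b_1-1}}$, using $|\sg|^3|\widetilde{u}|=|\widetilde{\dd_t^3u}|$.

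\textbf{The aliased terms $k\neq0$.} These are handled as in Lemma \ref{cts_dis_lem}, and this is where the second term of the estimate comes from. By periodicity the discrete part of $m_\tau$ is bounded by $\tau^{-2}$, so it is natural to split $m_\tau$. On $|\sg'|\gtrsim\tau^{-1}$ the continuous part satisfies $\sg'^2\lesssim|\sg'|\cdot|\sg'|$ and the discrete part satisfies $\tau^{-2}\lesssim\tau^{-1}|\sg'|$, so $|m_\tau(\sg')|\lesssim|\sg'|^2\simeq|\sg'|\,\lan|\sg'|-|\xi|\ran$, since $|\xi|\le\pi/2\tau\le|\sg'|/2$. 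One factor $|\sg'|$ is absorbed into $\widetilde{\dd_tu}$. We then apply Cauchy--Schwarz in $k$ with the weight $\lan|\sg+\tfrac{2\pi}{\tau}k|-|\xi|\ran^{2b}$, using $b>1/2$ to get the sum of inverse weights equal to $O(\tau^{2b})$, just as in the earlier proof.

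The target weight on the fundamental domain is $\lan\dt(|\sg|+|\xi|)\ran^{s_1-1}\lan\dt(|\sg|-|\xi|)\ran^{b_1-1}\lesssim\tau^{-(s_1-1)}\tau^{-(b_1-1)_+}$, where $b_1<1$ makes the second exponent nonpositive and it is bounded by $1$. The remaining factor $\lan|\sg'|-|\xi|\ran\simeq|\sg'|$ has to be traded against $\lan|\sg'|+|\xi|\ran^{s}\lan|\sg'|-|\xi|\ran^{b}\simeq|\sg'|^{s+b}$ with $|\sg'|\gtrsim\tau^{-1}$. Collecting all powers, these should give a bound of order $\tau^{s+b-s_1}\|\dd_tu\|_{X^{s,b}}$, up to balancing the exponents; the hypotheses $s+b>3/2$ and $s_1>1$ ensure that the powers used for summability are nonnegative. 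The main obstacle I expect is this exponent bookkeeping on the aliased terms: making the Cauchy--Schwarz weight, the $\tau^{-1}$ bounds for the discrete weights, and the gain from $|\sg'|\gtrsim\tau^{-1}$ combine into exactly $\tau^{s-s_1+b}$. If a naive splitting loses a power, my first adjustment would be to choose the Cauchy--Schwarz weight as $\lan|\sg'|-|\xi|\ran^{-2\beta}$ with a different exponent $\beta$.
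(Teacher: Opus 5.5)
Your overall structure is the paper's. It uses Poisson summation, the Taylor bound $|m_\tau(\sg)|\lesssim\tau|\sg|^3$ on the fundamental domain, $|m_\tau(\sg')|\lesssim|\sg'|^2$ on the aliased copies, Cauchy--Schwarz in $k$, and the crude bounds $\lan\dt(|\sg|+|\xi|)\ran^{s_1-1}\lesssim\tau^{-(s_1-1)}$ and $\lan\dt(|\sg|-|\xi|)\ran^{b_1-1}\leq1$.

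The gap is in the aliased step, which you leave at ``should give \ldots\ up to balancing the exponents''. The weight you commit to, $\lan|\sg'|-|\xi|\ran^{2b}\simeq|\sg'|^{2b}$, needs $b>1/2$ for $\sum_{k\neq0}|\sg'|^{-2b}$ to converge. After absorbing one $|\sg'|$ into $\dd_tu$ you are left with $|\sg'|^{2+2b}|\widetilde{\dd_tu}|^2$. This is bounded by $\tau^{2(s-1)}|\sg'|^{2(s+b)}|\widetilde{\dd_tu}|^2$ only when $s\geq1$. The lemma assumes neither $b>1/2$ nor $s\geq1$; it assumes only $s+b>3/2$, and your computation never uses that hypothesis. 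For example, $(s,b)=(2,0)$ and $(1/2,11/10)$ are both admissible, and each breaks your choice of weight.

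The missing observation is that $s+b>3/2$ is exactly the summability condition for the right weight, $|\sg'|^{-2(s+b-1)}$, which is the paper's choice. For $|\sg|\leq\pi/\tau$ one has $|\sg+\tfrac{2\pi}{\tau}k|\geq(2|k|-1)\pi/\tau$, so $\sum_{k\neq0}|\sg'|^{-2(s+b-1)}\lesssim\tau^{2(s+b-1)}$. The complementary factor is $|\sg'|^{2(s+b+1)}|\widetilde u|^2=|\sg'|^{2(s+b)}|\widetilde{\dd_tu}|^2\simeq\lan|\sg'|+|\xi|\ran^{2s}\lan|\sg'|-|\xi|\ran^{2b}|\widetilde{\dd_tu}|^2$. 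This uses $|\xi|\leq|\sg'|/2$ and holds for either sign of $s,b$. Multiply by $\tau^{-2(s_1-1)}$, then unfold the sum over $k$ together with the $\sg$-integral over $[-\pi/\tau,\pi/\tau]$ into an integral over $\R$. This gives exactly $\tau^{2(s-s_1+b)}\|\dd_tu\|_{X^{s,b}}^2$.

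More generally, any exponent $\beta\in(1/2,s+b-1]$ in your proposed adjustment works and gives the same power. That is why $s+b>3/2$, rather than $b>1/2$, is the natural hypothesis.
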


\begin{proof}
	Observe that by \eqref{eqn:poisson_sum},
	$$
	\tft\bigl((\Box-\Box_\tau)u_n\bigr)(\sg,\xi)=-\sum_{n\in\Z}\Bigl(\bigl(\sg+\tfrac{2\pi}{\tau}n\bigr)^2+\Bigl(\f{e^{-i\tau\sg}-1}{\tau}\Bigr)^2\Bigr)\widetilde{u}(\sg+\tfrac{2\pi}{\tau}n,\xi).
	$$
	We have to estimate
	\begin{align}
		\|(\Box-\Box_\tau)u_n\|_{X^{s_1-1,b_1-1}_\tau}^2
		&\simeq\int_{-\pi/\tau}^{\pi/\tau}\int_{|\xi|\leq\pi/2\tau}\lan\dt(|\sg|+|\xi|)\ran^{2(s_1-1)}\lan\dt(|\sg|-|\xi|)\ran^{2(b_1-1)}\nonumber\\
		&\quad\cdot \Bigl|\sum_{n\in\Z}\Bigl(\bigl(\sg+\tfrac{2\pi}{\tau}n\bigr)^2+\Bigl(\f{e^{-i\tau\sg}-1}{\tau}\Bigr)^2\Bigr)\widetilde{u}(\sg+\tfrac{2\pi}{\tau}n,\xi)\Bigr|^2 d\sg d\xi.\label{eqn:Box-Box_tau}
	\end{align}
	For $n=0$, we use a Taylor expansion to bound
	\begin{equation*}
		\Bigl|\sg^2+\Bigl(\f{e^{-i\tau\sg}-1}{\tau}\Bigr)^2\Bigr|\lesssim|\tau\sg^3|
	\end{equation*}
	and it is immediate that the $n=0$ portion of \eqref{eqn:Box-Box_tau} is bounded by $\tau^2\|\dd_t^3 u\|_{X^{s_1-1,b_1-1}}^2$.
	
	For $n\neq0$, the continuous derivative dominates. That is to say,
	\begin{equation*}
		\bigl|\bigl(\sg+\tfrac{2\pi}{\tau}n\bigr)^2+\bigl(\tfrac{e^{-i\tau\sg}-1}{\tau}\bigr)^2\bigr|\lesssim\bigl(\sg+\tfrac{2\pi}{\tau}n\bigr)^2.
	\end{equation*}
	Then for $(\sg,\xi)$ in the support of the integral,
	\begin{align*}
		&\Bigl|\sum_{n\neq0}\bigl(\bigl(\sg+\tfrac{2\pi}{\tau}n\bigr)^2+\bigl(\tfrac{e^{-i\tau\sg}-1}{\tau}\bigr)^2\bigr)\widetilde{u}(\sg+\tfrac{2\pi}{\tau}n,\xi)\Bigr|^2\\
		&\lesssim\Bigl|\sum_{n\neq0}\bigl(\sg+\tfrac{2\pi}{\tau}n\bigr)^2\widetilde{u}(\sg+\tfrac{2\pi}{\tau}n,\xi)\Bigr|^2\\
		&\lesssim \Bigl(\sum_{n\neq0}\bigl(\sg+\tfrac{2\pi}{\tau}n\bigr)^{-2(s+b-1)}\Bigr)\Bigl(\sum_{n\neq0}\bigl(\sg+\tfrac{2\pi}{\tau}n\bigr)^{2(s+b+1)}|\widetilde{u}(\sg+\tfrac{2\pi}{\tau}n,\xi)|^2\Bigr)\\
		&\lesssim\tau^{2(s+b-1)}\sum_{n\neq0}\lan|\sg+\tfrac{2\pi}{\tau}n|+|\xi|\ran^{2s}\lan|\sg+\tfrac{2\pi}{\tau}n|-|\xi|\ran^{2b}|\widetilde{\dd_tu}(\sg+\tfrac{2\pi}{\tau}n,\xi)|^2
	\end{align*}
	where we used that $|\sg+\tfrac{2\pi}{\tau}n|\geq2|\xi|$ in the last line.
	Crudely bounding $\lan\dt(|\sg|+|\xi|)\ran\lesssim\tau^{-1}$ and $\lan\dt(|\sg|-|\xi|)\ran^{2(b_1-1)}\lesssim1$, we obtain
	\begin{align*}	
		&\int_{-\pi/\tau}^{\pi/\tau}\int_{|\xi|\leq\pi/2\tau}\lan\dt(|\sg|+|\xi|)\ran^{2(s_1-1)}\lan\dt(|\sg|-|\xi|)\ran^{2(b_1-1)} \\
		&\quad\cdot\Bigl|\sum_{n\neq0}\bigl(\bigl(\sg+\tfrac{2\pi}{\tau}n\bigr)^2+\bigl(\tfrac{e^{-i\tau\sg}-1}{\tau}\bigr)^2\bigr)\widetilde{u}(\sg+\tfrac{2\pi}{\tau}n,\xi)\Bigr|^2 d\sg d\xi\\
		&\lesssim \tau^{-2(s_1-1)}\tau^{2(s+b-1)}\sum_{n\neq0}\int_{-\pi/\tau}^{\pi/\tau}\int_{|\xi|\leq\pi/2\tau} \lan|\sg+\tfrac{2\pi}{\tau}n|+|\xi|\ran^{2s}\lan|\sg+\tfrac{2\pi}{\tau}n|-|\xi|\ran^{2b}|\widetilde{\dd_tu}(\sg+\tfrac{2\pi}{\tau}n,\xi)|^2\,d\sg\,d\xi\\
		&\lesssim \tau^{2(s-s_1+b)}\|\dd_t u\|_{X^{s,b}}^2.\qedhere
	\end{align*}
\end{proof}
In Section \ref{sec:local-error} we will improve this lemma in the case that $u$ is a (frequency truncated) wave map.

\section{Bilinear estimates}\label{sec:bilinear-estimates}
In this section we will establish product estimates in the discrete Bourgain spaces. The results are stated below, however we postpone the proofs to the end of the section since they require some preparation. Henceforth fix $d=3$.

For $k\in \N=\{0,1,\ldots\}$, we define dyadic cut-offs 
\begin{equation}\label{eqn:vp_k}
\psi_k(\xi)=\chi(2^{-k}\xi)-\chi(2^{-(k-1)}\xi)\text{ for } k\neq0.
\end{equation}
and $\vp_0=\chi$.
Here $\chi$ is as in \eqref{eqn:chi}. We then define the (inhomogeneous) Littlewood-Paley multipliers $u_k:=P_k(u)=\F^{-1}(\psi_k\,\widehat{u})$ and $P_{\leq k}=\sum_{j=0}^kP_j$. Let also $\chi_{A}(t)$ denote the sharp indicator function to the set $A\subset\R$, not to be confused with the function $\chi$ introduced above.

Our first result concerns the continuity of (sharp) time cut-offs in spaces involving sufficiently few time derivatives. This will be used to handle the discontinuity in the definition of the iterates \eqref{SS}. Compare with \cite[Lemma 2.11]{tao2006nonlinear}.

For a Schwarz function $\eta(t)$, we let $\eta_T(t):=\eta(t/T)$.
\begin{lemma}\label{lem:ctnty-time-cut-off}
	Let $s,s_1\in\R$ and $-1/2<b<1/2$ such that $s+b<1/2$, $s_1+b>0$ and $s-1/2<s_1\leq s$. For any sequence of functions $(F_n)_n$ and Schwarz function $\eta$ the following hold uniformly in $0<T<1$:
	\begin{enumerate}
		\item\label{item:1}
		$
		\|\eta_T(n\tau)\, F_n\|_{X^{s_1,b}_\tau}\lesssim_{\eta,s,s_1,b} T^{s-s_1}\|F_n\|_{X^{s,b}_\tau},
		$
		\item\label{item:2}
		$
		\|\chi_{[0,T]}(n\tau)\, F_n\|_{X^{s_1,b}_\tau}\lesssim_{s,s_1,b} T^{s-s_1}\|F_n\|_{X^{s,b}_\tau}
		$
		\item\label{item:3}
		$
		\|\chi_{[0,\infty)}(n\tau)\, F_n\|_{X^{s,b}_\tau}\lesssim_{s,b} \|F_n\|_{X^{s,b}_\tau}.
		$
	\end{enumerate}
	The analogous estimates in the continuous Bourgain spaces are also all true.
\end{lemma}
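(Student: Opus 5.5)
We follow the strategy of \cite[Lemma 2.11]{tao2006nonlinear}, adapted to the wave weights and to the discrete setting. The main reduction is to decompose the frequency--modulation plane dyadically. Write $F_n=\sum_{j}P_jF_n$ in space. On each piece, where $|\xi|\sim 2^j$, the weight $\lan\dt(|\sg|+|\xi|)\ran^s\lan\dt(|\sg|-|\xi|)\ran^b$ behaves like $2^{js}\lan\dt(|\sg|-|\xi|)\ran^b$ when $|\sg|\lesssim 2^j$, and like $\lan\sg\ran^{s+b}$ when $|\sg|\gg 2^j$. Multiplication by a time function $m(n\tau)$ acts only in $\sg$, as a convolution with $\F_\tau(m)$. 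It commutes with $P_j$, and $P_j$ has finite overlap. So it suffices to prove each estimate with fixed $\xi$, uniformly in $|\xi|\le\pi/2\tau$. The problem then reduces to a one-dimensional weighted $L^2_\sg$ estimate for convolution with $\widehat{m}$, and the weight is comparable to
\[
w_{s,b}(\sg;\xi)=\lan|\sg|+|\xi|\ran^{s}\lan|\sg|-|\xi|\ran^{b}.
\]
This holds on $[-\pi/\tau,\pi/\tau]$, with the periodicity handled as in the proof of Lemma~\ref{lem:bourgain_prop}\ref{item:II}.

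\textbf{Item (3): the Heaviside cutoff.} This is the core estimate. In the $\sg$ variable, multiplication by $\chi_{[0,\infty)}(n\tau)$ is, up to a convolution with a smooth kernel, the Riesz projection $\tfrac12(I+iH)$. Here $H$ is the Hilbert transform, discretely the periodic Hilbert transform. The estimate then follows if $w_{s,b}(\cdot;\xi)^2$ is an $A_2$ weight on $\R$ (respectively on the circle), with $A_2$ constant uniform in $\xi$. Near the cone points $\sg=\pm|\xi|$, the weight locally looks like $|\sg\mp|\xi||^{2b}$, which is $A_2$ exactly when $-1/2<b<1/2$. At large $|\sg|$ it looks like $|\sg|^{2(s+b)}$, and this is $A_2$ at infinity provided $|s+b|<1/2$. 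Near $\sg=0$ with $|\xi|$ large, the weight is smooth and comparable to a constant. The hypotheses $s+b<1/2$ and $s_1+b>0$ (hence $s+b>-1/2$, since $s_1\le s$) should give exactly this range. The uniformity in $\xi$ will come from the fact that products of such power weights centered at separated points remain $A_2$ with constants controlled by the exponents alone. For the discrete version, we decompose $\F_\tau(\chi_{[0,\infty)}(n\tau))$ into the principal-value kernel $\tau/(1-e^{-i\tau\sg})$ plus a delta. Near $\sg=0$ this kernel is $\approx 1/(i\sg)$, and it is bounded away from $0$. We then invoke the periodic Hunt--Muckenhoupt--Wheeden theorem.

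\textbf{Items (1) and (2): the $T^{s-s_1}$ gain.} For item (1), we write the weight ratio $w_{s_1,b}/w_{s,b}=\lan|\sg|+|\xi|\ran^{-(s-s_1)}$. We split $F_n$ into frequencies $|\xi|\lesssim T^{-1}$ and $|\xi|\gg T^{-1}$. On the high piece, the $\lan\xi\ran^{-(s-s_1)}\le T^{s-s_1}$ gain comes for free, and $\eta_T$ is bounded on $X^{s,b}_\tau$ by the scale-invariant version of (2.\ref{item:II}) at scale $T$. That bound follows from the $A_2$ argument above, or directly since $\widehat{\eta_T}$ is an approximate identity at scale $T^{-1}$. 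On the low piece, $|\sg|\lesssim T^{-1}$ after the smoothing. There we interpolate between:
\begin{itemize}
\item the Sobolev-type bound $\|\eta_T f\|_{H^{s_1+b}_t}\lesssim T^{s-s_1}\|f\|_{H^{s+b}_t}$, valid for $-1/2<s_1+b\le s+b<1/2$ by H\"older and Sobolev embedding, the $L^p$ norms of $\eta_T$ giving the power of $T$;
\item the modulation weights, handled by the $A_2$ structure.
\end{itemize}
Item (2) then follows by writing $\chi_{[0,T]}=\chi_{[0,\infty)}\cdot(1-\chi_{(T,\infty)})$. We combine (3) with (1) applied to a smooth $\eta_T$ equal to $1$ on $[0,T]$; note that $\chi_{[0,T]}=\chi_{[0,T]}\eta_T$. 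We use (3), translated, twice: once for $\chi_{[0,\infty)}$ and once for $\chi_{(-\infty,T]}$. Translation in $n$ is harmless for the discrete norm, since it multiplies $\widetilde F$ by a unimodular factor. The continuous statements follow by the same argument with $H$ on $\R$.

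\textbf{Main obstacle.} We expect the hardest step to be the uniform $A_2$ verification near the light cone when $|\xi|$ is large. There the weight has two singular points, $\pm|\xi|$, together with growth $|\sg|^{s+b}$ at infinity. In the discrete case there is an additional periodic wraparound at $\sg=\pm\pi/\tau$, where the weight is not periodic. We would first try to handle the wraparound by restricting to $|\xi|\le\pi/2\tau$, so that the cone points stay a distance $\gtrsim\tau^{-1}$ from $\pm\pi/\tau$. The weight is then comparable to a smooth periodic weight there, and we can splice the estimates with cutoffs.
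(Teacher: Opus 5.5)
Your route is genuinely different from the paper's. The two routes compare as follows.

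\textbf{How the two proofs are organized.} The paper proves a single multiplier criterion: any $\varphi$ satisfying \eqref{eqn:condition} obeys $\|\varphi(n\tau)F_n\|_{X^{s_1,b}_\tau}\lesssim T^{s-s_1}\|F_n\|_{X^{s,b}_\tau}$. It proves this by a Littlewood--Paley decomposition in spatial frequency and output modulation. Lemmas \ref{lem:geom4} and \ref{lem:geom5} decide which modulations of $\varphi$ and $F$ can interact across the periodic wrap-around, and the modulation Bernstein estimate does the rest. It then checks \eqref{eqn:condition} for $\eta_T$, $\chi_{[0,T]}$ and $\chi_{[0,\infty)}$ one at a time.

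You instead work fiberwise in $\xi$ with weighted norm inequalities:
\begin{itemize}
\item (3) is the weighted boundedness of the (periodic) Riesz projection.
\item Uniform boundedness of $\eta_T$ is the weighted boundedness of the Hardy--Littlewood maximal function.
\item (2) follows from (1) and (3) via $\chi_{[0,T]}=\chi_{[0,\infty)}\chi_{(-\infty,T]}\eta_T$. Using (3) at level $(s_1,b)$ is legitimate since $0<s_1+b<1/2$.
\end{itemize}
Your $A_2$ claim is correct. For $R\le\pi/2\tau$, the weight $\langle|\sigma|+R\rangle^{2a}\langle|\sigma|-R\rangle^{2b}$ is $A_2$ on $\R$, and its (even) restriction to $[-\pi/\tau,\pi/\tau]$ is $A_2$ on the circle, uniformly in $R$ and $\tau$, exactly when $|b|<1/2$ and $|a+b|<1/2$.

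What your approach buys: the hypotheses become transparent, since they are precisely the $A_2$ conditions. You need no analogue of Lemmas \ref{lem:geom4}--\ref{lem:geom5}. You also get a slightly more general (1): no need for $s-s_1<1/2$, and $s_1+b>-1/2$ suffices. What the paper's approach buys: it is self-contained, with no Hunt--Muckenhoupt--Wheeden theorem on the circle. It also sets up the modulation geometry that is reused in the bilinear and trilinear estimates.

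\textbf{The step that is not yet an argument.} This is the low piece of (1), which is exactly where the factor $T^{s-s_1}$ has to be earned. Multiplication by $\eta_T$ does not localize $\sigma$. ``Interpolating'' between the one-dimensional bound $\|\eta_Tf\|_{H^{s_1+b}}\lesssim T^{s-s_1}\|f\|_{H^{s+b}}$ and an $A_2$ bound is not a well-defined step. For $1\ll|\xi|\lesssim T^{-1}$ the weights $\langle\sigma\rangle^{s+b}$ ignore the cone, while the $A_2$ bound carries no power of $T$.

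A direct argument works instead:
\begin{itemize}
\item First discard outputs with $|\sigma|+|\xi|\gtrsim T^{-1}$. There the gain is free, and you finish with the maximal-function bound.
\item Then fix $R=|\xi|\lesssim T^{-1}$ and outputs $|\sigma|\lesssim T^{-1}$. For inputs $|\sigma'|\lesssim T^{-1}$, use $|\widehat{\eta_T}|\lesssim T$ and Cauchy--Schwarz, together with $\int_{|\sigma|\lesssim T^{-1}}w_{s_1,b}^2\,d\sigma\lesssim T^{-1-2(s_1+b)}$ and $\int_{|\sigma|\lesssim T^{-1}}w_{s,b}^{-2}\,d\sigma\lesssim T^{-1+2(s+b)}$.
\item These two integral bounds hold uniformly in $R\lesssim T^{-1}$ precisely because $|b|<1/2$, $s_1+b>-1/2$ and $s+b<1/2$. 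The product gives $T^{2}\,T^{-1-2(s_1+b)}\,T^{-1+2(s+b)}=T^{2(s-s_1)}$.
\item Inputs with $|\sigma'|\gg T^{-1}$ are handled the same way, using $|\widehat{\eta_T}(\sigma-\sigma')|\lesssim_N T\langle T\sigma'\rangle^{-N}$.
\end{itemize}

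Relatedly, your alternative justification of the uniform bound for $\eta_T$ (``$\widehat{\eta_T}$ is an approximate identity at scale $T^{-1}$'') does not suffice on its own. The weight varies on scale $1$ near $|\sigma|=|\xi|$, so you need the maximal function and $A_2$.

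Finally, in the discrete case the kernel bound is $|\mathcal{F}_\tau(\eta_T(n\tau))|\lesssim T+\tau$, so the argument gives $\max\{T,\tau\}^{s-s_1}$. This is sharp, not a defect of your method. For $T<\tau$ one has $\chi_{[0,T]}(n\tau)=\delta_{n0}$, and the best constant is $\sim\tau^{s-s_1}$. The paper's verification of \eqref{eqn:condition} for $\chi_{[0,T]}$ also tacitly assumes $T\gtrsim\tau$, which is harmless in every application in the paper.
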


The second result is the bilinear estimates which allow us to fully exploit the null structure in the wave maps nonlinearity.
\begin{theorem}[Bilinear estimates]\label{thm:bilinear_estimates}
	Let $s=3/2+s'$ and $b=1/2+b'$ such that $s'>b'>0$. Let $u=(u_n(x))_n$ and $v=(v_n(x))_n$ be sequences of functions with frequencies restricted to $|\xi|\leq\pi/4\tau$. Then for any $\ko,\kt\in \N$ and any $\eps>0$ it holds
	\begin{equation}
		\|P_{\ko}u_n\cdot P_{\kt}v_n\|_{\xsbt}\lesssim_{\eps,s,b} 2^{-(s'-b'-\eps)\min\{\ko,\kt\}}\|P_{\ko}u_n\|_{\xsbt}\|P_{\kt}v_n\|_{\xsbt}.\label{eqn:statement1}
	\end{equation}
	In particular, dyadic summation leads to
	\begin{equation}
		\|u_n\cdot v_n\|_{X^{s,b}_\tau}\lesssim_{s,b}\|u_n\|_{X^{s,b}_\tau}\|v_n\|_{X^{s,b}_\tau}.\label{eqn:algebra-property}
	\end{equation}
	If $F=(F_n(x))_n$ also has frequency restricted to $|\xi|\leq\pi/4\tau$ and $b'<1/2$, we also have the inhomogeneous estimate
	\begin{equation}
		\|P_{\ko}u_n\cdot P_{\kt}F_n\|_{X^{s-1,b-1}_\tau}\lesssim_{s,b}2^{-s'\min\{\kt,\kth\}}\|P_{\ko}u_n\|_{X^{s,b}_\tau}\|P_{\kt}F_n\|_{X^{s-1,b-1}_\tau}\label{eqn:statement2}
	\end{equation}
	which sums to
	\begin{equation}
		\|u_n\cdot F_n\|_{X^{s-1,b-1}_\tau}\lesssim_{s,b}\|u_n\|_{X^{s,b}_\tau}\|F_n\|_{X^{s-1,b-1}_\tau}.\label{eqn:xsbtm-property}
	\end{equation}
	The analogous estimates in the continuous Bourgain spaces are also all true, without the need to restrict frequencies.
\end{theorem}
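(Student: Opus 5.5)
We prove the bilinear estimates by duality and Plancherel in the semidiscrete Fourier variables, after reducing to a trilinear integral over modulation-localised pieces. Write $\widetilde{P_{\ko}u}$ and $\widetilde{P_{\kt}v}$ as functions of $(\sg_j,\xi_j)$ on $[-\pi/\tau,\pi/\tau]\x\R^3$. The transform of the product is a convolution in $\xi$ and a \emph{periodic} convolution in $\sg$, with $\sg_1+\sg_2$ taken modulo $2\pi/\tau$.

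The frequency restriction $|\xi_j|\leq\pi/4\tau$ ensures the output frequency satisfies $|\xi_1+\xi_2|\leq\pi/2\tau$, so all weights $\dt(\cdot)$ are comparable to their continuous analogues. The first step is to handle the wrap-around. If $\sg_1+\sg_2$ leaves $[-\pi/\tau,\pi/\tau]$ and is reduced by $2\pi/\tau$, then $|\sg_1|+|\sg_2|\gtrsim\tau^{-1}\gg|\xi_1|+|\xi_2|$. Hence at least one factor has modulation $\gtrsim\tau^{-1}$, larger than all spatial frequencies. There the $b$-weight on that factor pays for the output weights, since $\lan\dt(|\sg|\pm|\xi|)\ran\lesssim\tau^{-1}$, and this piece is estimated crudely. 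In the remaining non-wrapped region the integral coincides with a continuous-type convolution. There we use $|\dt(|\sg|\pm|\xi|)|\simeq||\sg|\pm|\xi||$ and $|\dt(|\sg|+|\xi|)|\simeq|\sg|+|\xi|$.

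Next we decompose dyadically: spatial frequencies $|\xi_j|\sim2^{k_j}$ and output $2^{k_3}$, and modulations $\bigl||\sg_j|-|\xi_j|\bigr|\sim2^{j_i}$, $i=1,2,3$. On each piece we reduce to the continuous-type dyadic trilinear bound, which follows Klainerman–Machedon and Selberg as in \cite{geba2016introduction}. It has the form
$$
\Bigl|\int \widetilde{F}_1\widetilde{F}_2\overline{\widetilde{G}}\Bigr|\lesssim 2^{\frac{\min(j)}{2}}2^{\frac{\mathrm{med}(j)}{4}}2^{\frac{\min(k)}{2}}2^{\frac{\mathrm{med}(k)}{4}}\ldots\|F_1\|_{L^2}\|F_2\|_{L^2}\|G\|_{L^2}.
$$
This is a thickened-cone convolution estimate, proved by Cauchy–Schwarz and volume bounds on intersections of thickened cones. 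I would prove it in the form needed: given fixed $\xi$ and output, bound the measure of the set of $(\sg_1,\xi_1)$ with both factors at the prescribed modulations. Since $\sg$ ranges over an interval and the weights are comparable to the continuous ones, the geometric volume estimates transfer directly. The low-dimensional Strichartz-type bound is also available from \cite{ruff} if needed.

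We then sum. For \eqref{eqn:statement1}, place the $X^{s,b}$ weight of the output on the highest frequency. High–high to low interactions are controlled by the gain $2^{\min k/2}$ against the loss in $s$. The modulation sums converge because $b>1/2$ and $s'>b'$, which yields the factor $2^{-(s'-b'-\eps)\min\{\ko,\kt\}}$. For \eqref{eqn:statement2} the output weight is $\lan\cdot\ran^{b-1}$ with $b-1<-1/2$, i.e.\ dual to $X^{1-s,1-b}$. We argue by duality, $\lan u F,w\ran=\lan F,\bar u w\ran$, so it suffices to prove $\|uw\|_{X^{1-s,1-b}_\tau}\lesssim\|u\|_{X^{s,b}_\tau}\|w\|_{X^{1-s,1-b}_\tau}$. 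This is again a dyadic trilinear bound of the same type, and $b'<1/2$ makes the $1-b$ modulation sums converge. Summation over $\ko,\kt$ gives \eqref{eqn:algebra-property} and \eqref{eqn:xsbtm-property}.

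The main obstacle is the geometry of the convolution near the boundary $|\sg|\approx\pi/\tau$ and the wrap-around region. There the discrete weights are non-periodic in $\sg$, so a shift by $2\pi/\tau$ could create spurious resonances, for example an output at low modulation although neither input is. I would handle this by exploiting that all spatial frequencies are at most $\pi/4\tau$. Consequently the cone $|\sg|=|\xi|$ stays at distance $\gtrsim\tau^{-1}$ from $\pm\pi/\tau$, and every wrap-around configuration has a factor with modulation $\sim\tau^{-1}\gtrsim$ every frequency. That factor absorbs every weight, as in the elementary bounds used in Lemma~\ref{lem:bourgain_prop}\ref{item:II}.
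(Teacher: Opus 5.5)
Your splitting of the periodic $\sigma$-convolution into a non-wrapped and a wrapped part is a different route from the paper's. The paper reruns the continuous dyadic frequency/modulation argument in the discrete setting: it proves the interaction Lemmas \ref{claim2}--\ref{claim4}, including the cases $(\sigma_2,\xi_2)\in S_{\pm1}$, and uses the transferred discrete Strichartz estimates of Proposition~\ref{transfer_prop}.

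For \eqref{eqn:statement1} your route works and is more economical. On the non-wrapped part, the transform of the product is exactly $\widetilde{UV}$ restricted to $|\sigma|\le\pi/\tau$, where $\widetilde U=\chi_{[-\pi/\tau,\pi/\tau]}(\sigma)\,\tft(u_n)$. Under the frequency restriction $\|U\|_{X^{s,b}}\simeq\|u_n\|_{X^{s,b}_\tau}$, so the continuous estimate applies as a black box. You need neither the thickened-cone bound (whose displayed form you leave imprecise) nor discrete Strichartz. On the wrapped part, $|\sigma_1|+|\sigma_2|>\pi/\tau$ and $|\xi_1|+|\xi_2|\le\pi/(2\tau)$ force one modulation $\ge\pi/(4\tau)$. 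That factor's weight $\gtrsim\tau^{-(s+b)}$ pays for the output weight, and the other factor goes into $\ell^\infty_\tau L^\infty_x$ at cost $2^{-s'k}$.

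The gap is in the inhomogeneous estimate \eqref{eqn:statement2}, equivalently your dual $X^{1-s,1-b}_\tau$ bound. There the claim that the high-modulation factor ``absorbs every weight'' is false. A factor in $X^{s-1,b-1}_\tau$ or $X^{1-s,1-b}_\tau$ with $|\sigma|$ and modulation $\sim\tau^{-1}$ carries weight only $\tau^{\mp(s'+b')}$, and the output weight $\langle\cdot\rangle^{s-1}\langle\cdot\rangle^{b-1}$ is largest at low output modulation.

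Concretely, take $\sigma_1\approx-\pi/\tau$, $\sigma_2\approx-\pi/\tau+|\xi_1+\xi_2|$ and $|\xi_1|\approx|\xi_2|\approx\pi/(4\tau)$. The output lands near $(|\xi|,\xi)$ on the cone and both inputs have modulation $\sim\tau^{-1}$. The output weight is $\sim\tau^{-(s-1)}$, with no help from the power $b-1$. Your crude bound (one input in $\ell^\infty_\tau L^\infty_x$, the other in $\ell^2_\tau L^2_x$) then gives only $\tau^{s+2b-3}=\tau^{s'+2b'-1/2}$. The requirement is $2^{-s'\min\{\ko,\kt\}}\sim\tau^{s'}$, so the argument fails whenever $b'<1/4$.

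What is missing is the exact geometry of the wrapped region. There $|\sigma_1|+|\sigma_2|+|\sigma|=2\pi/\tau$, so at least two of these are $\ge\pi/(2\tau)$. This gives two cases:
\begin{itemize}
\item If $u$ has modulation $\ll\tau^{-1}$, then $F$ and the output both sit at modulation $\sim\tau^{-1}$ with comparable weights $\tau^{-(s-1)}\tau^{1-b}$. Putting $u$ in $\ell^\infty_\tau L^\infty_x$ closes this case.
\item If the output has modulation $2^l\ll\tau^{-1}$ at frequency $2^k$, then both inputs are at modulation $\sim\tau^{-1}$. You must place the output (the test function), whose $\sigma$-support has measure $\sim2^l$, in $L^1$. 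This yields $\tau^{2s+2b-2}2^{(s-1)\max\{k,l\}}2^{(b-1/2)l}2^{3k/2}\lesssim\tau^{s'+b'}$.
\end{itemize}
This is the information the paper's Lemmas~\ref{claim3} and \ref{claim4} supply in the $S_{\pm1}$ cases; with it added, your approach goes through.
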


We approach the proofs of the above results by dyadic decompositions in both frequency and modulation, exploiting geometric cancellations of the product in Fourier space. The modulation cut-off $Q_l$ is defined by
$$
Q_l u(n,x)=\tft^{-1}(\vp_l(|\sg|-|\xi|)\tft(u_n)(\sg,\xi))(n,x).
$$
Note that the semi-discrete Fourier transform of $Q_l u$ is given by $\vp_l(|\sg|-|\xi|)\tft(u)(\sg,\xi)$ for $-\pi/\tau\leq\sg\leq\pi/\tau$ and extends periodically outside that region.
Observe the following modulation Bernstein estimate.
\begin{lemma}\label{lem:modulation_bernstein}
	Let $2\leq p,q\leq\infty$. Then
	$$
	\|P_kQ_ju_n\|_{\ell^p_\tau L^q_x}\lesssim_{p,q}2^{3(\half-\f{1}{q})k}2^{(\half-\f{1}{p})j}\|P_kQ_j u_n\|_{\ell^2_\tau L^2_x}
	$$
\end{lemma}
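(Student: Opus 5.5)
The plan is to prove the space and time Bernstein estimates separately and then combine them. The spatial one is standard; the temporal one is essentially Bernstein for the discrete Fourier variable. First I would reduce to $q=2$. For each fixed $n$, the function $P_kQ_ju_n(\cdot)$ has spatial Fourier support in $\{|\xi|\lesssim 2^k\}$, since the $\xi$-support is unaffected by $Q_j$. The usual Bernstein inequality in $\R^3$ therefore gives
$$
\|P_kQ_ju_n\|_{L^q_x}\lesssim 2^{3(\half-\f1q)k}\|P_kQ_ju_n\|_{L^2_x},
$$
uniformly in $n$. Taking the $\ell^p_\tau$ norm, it remains to show
$$
\|P_kQ_ju_n\|_{\ell^p_\tau L^2_x}\lesssim 2^{(\half-\f1p)j}\|P_kQ_ju_n\|_{\ell^2_\tau L^2_x}.
$$
By interpolation between $p=2$ (trivial) and $p=\infty$, it suffices to treat $p=\infty$. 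Alternatively one can interpolate directly via H\"older in $n$: $\|w\|_{\ell^p_\tau}\le\|w\|_{\ell^\infty}^{1-2/p}\|w\|_{\ell^2_\tau}^{2/p}$.

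For $p=\infty$ I would argue as in part (I) of Lemma \ref{lem:bourgain_prop}, using the partial inversion formula \eqref{eqn:inversion} in $\sg$ at fixed $\xi$. By Plancherel in $x$, Cauchy--Schwarz in $\sg$ gives
$$
\|P_kQ_ju_n\|_{L^2_x}^2\lesssim\int_\xi\Bigl(\int_{-\pi/\tau}^{\pi/\tau}|\vp_j(|\sg|-|\xi|)|\,d\sg\Bigr)\int_{-\pi/\tau}^{\pi/\tau}|\tft(P_kQ_ju_n)(\sg,\xi)|^2\,d\sg\,d\xi,
$$
where I use that $\tft(Q_ju)$ is supported where $\vp_j(|\sg|-|\xi|)\neq0$. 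Strictly, one should insert a slightly fattened cut-off $\tilde\vp_j$ equal to $1$ on $\mathrm{supp}\,\vp_j$, so that $\tft(Q_ju)=\tilde\vp_j\tft(Q_ju)$.

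The key observation is that for fixed $\xi$, the set $\{\sg\in[-\pi/\tau,\pi/\tau]: \bigl||\sg|-|\xi|\bigr|\lesssim 2^j\}$ is a union of at most two intervals. Its measure is therefore $\lesssim 2^j$, uniformly in $\xi$ and $\tau$. This holds even when $2^j$ exceeds $\tau^{-1}$, because the interval length is then trivially bounded by $2\pi/\tau\lesssim 2^j$. Parseval then gives $\sup_n\|P_kQ_ju_n\|_{L^2_x}\lesssim 2^{j/2}\|P_kQ_ju_n\|_{\ell^2_\tau L^2_x}$.

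The only point needing care is the periodic extension noted after the definition of $Q_l$. The inversion integral runs only over $[-\pi/\tau,\pi/\tau]$, where the transform is exactly $\vp_j(|\sg|-|\xi|)\tft(u)$, so periodicity causes no extra contribution. Likewise, no frequency restriction on $\xi$ is needed, since nothing here depends on $d_\tau$. Finally, I combine the two Bernstein steps and apply the H\"older interpolation in $n$ to obtain the stated estimate for all $2\le p,q\le\infty$.
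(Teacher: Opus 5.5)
Your proposal is correct and follows the paper's proof essentially step for step: spatial Bernstein reduces to $q=2$, and the $p=\infty$ case follows from the inversion formula and Cauchy--Schwarz in $\sigma$ over a set of measure $\lesssim 2^j$, with intermediate $p$ obtained by interpolation. Your explicit H\"older step in $n$ and the fattened cut-off only spell out what the paper leaves implicit.
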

\begin{proof}
	The gain in $2^k$ comes from the standard Bernstein estimate, so we may reduce to $q=2$. The case $p=2$ is trivial so it only remains to consider $p=\infty$. We compute
	\begin{align*}
		\|P_kQ_ju_n\|_{\ell^\infty_\tau L^2_x}
		&\simeq\sup_{n\in\Z}\biggl\|\int_{-\pi/\tau}^{\pi/\tau}e^{in\tau\sg}\vp_k(\xi)\vp_j(|\sg|-|\xi|)\tft(u_n)(\sg,\xi)\,d\sg\biggr\|_{L^2_\xi}\\
		&\lesssim\sup_{n\in\Z}\biggl\|2^{j/2}\biggl(\int_{-\pi/\tau}^{\pi/\tau}|\tft({P_kQ_j u_n})(\sg,\xi)|^2\,d\sg\biggr)^\half\biggr\|_{L^2_\xi}\\
		&\lesssim 2^{j/2}\|P_kQ_j u_n\|_{\ell^2_\tau L^2_x}.\qedhere
	\end{align*}
\end{proof}	

\vspace{1em}
\begin{center}
	$* \quad * \quad *$
\end{center}
\vspace{1em}

We now arrive at the main technical part of this work, which is central to studying modulation paraproducts in the discrete setting.
We first consider the interactions of a sequence of functions $w_n(x)$ with a sequence $\vp_n$ in the time variable only. To simplify notation, we will denote $w=(w_n(x))_n$, and similar for $\vp$, so that $\vp\cdot w=(\vp_n\cdot w_n(x))_n$. We aim to exhibit cancellations in the decomposition
$$
P_j\vp\cdot Q_rw=\sum_{l\geq0}Q_l(P_j\vp\cdot Q_rw).
$$
Here $P_j\vp$ refers to the dyadic cut-off in the $n$-variable, i.e. 
$$
\ft(P_j\vp)(\sg)=\psi_j(\sg)\ft(\vp)(\sg),\qquad(2^j\leq 4\pi/\tau,\ \sg\in[-\pi/\tau,\pi/\tau]).
$$
Recall
\begin{equation}\label{conv}
	\tft(\vp\cdot w)(\sg,\xi)=\f{1}{2\pi}\int_{\sg_1=-\pi/\tau}^{\pi/\tau}\ft\vp(\sg-\sg_2)\tft w(\sg_2,\xi)\,d\sg_2.
\end{equation}
In the proofs which follow, we let $\sg_1$ denote an arbitrary point in the support of $\ft\vp$ and $(\sg_2,\xi_2)$ a point in the support of $\tft w$. Then $(\sg,\xi)=(\sg_1+\sg_2,\xi_2)$ is an arbitrary point in the support of the product. By periodicity it suffices to consider $\sg,\,\sg_2\in[-\pi/\tau,\pi/\tau]$ and $\sg_1\in[-2\pi/\tau,2\pi/\tau]$. When $\sg_1\in[-\pi/\tau,\pi/\tau]$, the geometry reduces to that of the continuous case.

The cut-off $|\xi|\lesssim\tau^{-1}$ is not necessary for Lemmas \ref{lem:geom4}-\ref{claim4}.

\begin{lemma}\label{lem:geom4}
	For $l\in\Z$,
	$$
	Q_l(P_{\leq l-10}\vp\cdot Q_{\leq l-10}w)=0.
	$$
\end{lemma}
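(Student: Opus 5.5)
The plan is to prove the identity at the level of Fourier supports. By \eqref{conv}, $\tft(P_{\leq l-10}\vp\cdot Q_{\leq l-10}w)(\sg,\xi)$ is a convolution in $\sg$ only. So it suffices to show that for every $(\sg,\xi)$ reached by such a convolution, the modulation $\bigl||\sg|-|\xi|\bigr|$ lies outside the support of $\vp_l$, i.e. below $2^{l-2}$ say; then multiplying by $\vp_l(|\sg|-|\xi|)$ gives zero. I work with representatives $\sg,\sg_2\in[-\pi/\tau,\pi/\tau]$ and $\sg_1=\sg-\sg_2\in[-2\pi/\tau,2\pi/\tau]$, as set up before the statement. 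The key point is that $P_{\leq l-10}\vp$ has $\ft$-support in $|\sg_1|\lesssim 2^{l-10}$ modulo $2\pi/\tau$, and $Q_{\leq l-10}w$ has support in $\bigl||\sg_2|-|\xi|\bigr|\lesssim 2^{l-10}$.

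\textbf{Case 1: $\sg_1$ is an honest small frequency.} Suppose the representative of $\sg_1$ in $[-\pi/\tau,\pi/\tau]$ is $\sg_1$ itself, so $|\sg_1|\leq 2^{l-9}$. The reverse triangle inequality gives $\bigl||\sg|-|\sg_2|\bigr|\leq|\sg_1|$. Hence
\[
\bigl||\sg|-|\xi|\bigr|\leq|\sg_1|+\bigl||\sg_2|-|\xi|\bigr|\leq 2^{l-9}+2^{l-9}<2^{l-2}.
\]
This lies below the support of $\vp_l$ when $l\geq1$. For $l=0$ the cut-offs $P_{\leq -10}$ and $Q_{\leq-10}$ are empty, so the statement is trivial; this also covers negative $l$.

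\textbf{Case 2: wrap-around.} Here $\sg_1=\sg_1'\pm 2\pi/\tau$ with $|\sg_1'|\leq 2^{l-9}$. This occurs when $\sg_2$ is near $\pm\pi/\tau$ and the sum $\sg_2+\sg_1'$ leaves $[-\pi/\tau,\pi/\tau]$, so that $\sg=\sg_2+\sg_1'\mp2\pi/\tau$. I use that $|\cdot|$ behaves well under this reflection: if $\sg_2+\sg_1'>\pi/\tau$ then $\sg=\sg_2+\sg_1'-2\pi/\tau\in[-\pi/\tau,-\pi/\tau+2^{l-9}]$. Then
\[
|\sg|=2\pi/\tau-\sg_2-\sg_1', \qquad \bigl||\sg|-|\sg_2|\bigr|=\bigl|2\pi/\tau-2\sg_2-\sg_1'\bigr|.
\]
Since $\pi/\tau-\sg_2<|\sg_1'|$, this is at most $3|\sg_1'|$. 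Alternatively, I can note directly that $|\sg|$ and $|\sg_2|$ are both within $|\sg_1'|$ of $\pi/\tau$, so their difference is at most $2|\sg_1'|$. Either way the modulation of $(\sg,\xi)$ is at most $3\cdot2^{l-9}+2^{l-9}<2^{l-2}$, and the conclusion is the same.

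The main obstacle is this wrap-around case, which is where the non-periodic weights discussed after \eqref{eqn:Bourgain_norm} could a priori cause trouble. It is resolved by the observation that $\sg\mapsto|\sg|$ is $1$-Lipschitz on the circle $\R/(2\pi/\tau)\Z$ when measured as distance to $0$. I also need to check that $2^{l-10}$ is compatible with the constraint $2^j\leq4\pi/\tau$ implicit in the definition of $P_j\vp$. If $2^{l-10}\gtrsim\tau^{-1}$, then $P_{\leq l-10}\vp=\vp$, but in that regime $\bigl||\sg|-|\xi|\bigr|\leq|\sg|+|\xi|$. I expect this is why frequency bounds are noted as unnecessary. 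If needed, I would handle it by observing that $\vp_l(|\sg|-|\xi|)$ vanishes whenever $2^{l-1}>\pi/\tau+|\xi|$, and otherwise falls back on the Lipschitz argument with the distance to $0$ on the circle being at most $\pi/\tau$.
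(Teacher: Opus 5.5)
Your proof is correct and is essentially the paper's argument. You use the same Fourier-support reduction with $\sg,\sg_2\in[-\pi/\tau,\pi/\tau]$ and the same triangle inequality when $\sg_1$ lies in the fundamental domain; in the wrap-around case you make the same observation that $|\sg|$ and $|\sg_2|$ are both pinned within $O(2^{l-10})$ of $\pi/\tau$. Your circle-Lipschitz remark gives $\bigl||\sg|-|\sg_2|\bigr|\leq|\sg_1'|$ with no smallness of $2^{l-10}$ relative to $\tau^{-1}$ needed, so your closing worry is moot. (The $\pm$/$\mp$ in your Case 2 parametrization should carry the same sign, since $\sg=\sg_1+\sg_2$, but your explicit computation is consistent.)
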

\begin{proof}
	Let $\sg,\sg_2\in[-\pi/\tau,\pi/\tau]$.
	Then in the classical case $\sg_1\in[-\pi/\tau,\pi/\tau]$, we estimate
	\begin{align*}
		||\sg_1+\sg_2|-|\xi_2||&\leq||\sg_2|-|\xi_2||+|\sg_1|\leq 2^{l-9}
	\end{align*}
	and the result is proved.
	
	If $\sg_1\in[\pi/\tau,2\pi/\tau]$, we have $|\sg_1-2\pi/\tau|\leq2^{l-10}$ and $0\leq\sg_1+\sg_2\leq \pi/\tau$. It follows that $||\sg_2|-\pi/\tau|\leq 2^{l-10}$ and so $||\xi_2|-\pi/\tau|\leq2^{l-10}$.
	On the other hand, since $\sg_2\geq-\pi/\tau$,
	$$
	\pi/\tau\geq \sg_1+\sg_2\geq\pi/\tau-2^{l-10}.
	$$
	Therefore $|\xi_2|,\,\sg_1+\sg_2\in[\pi/\tau-2^{l-10},\pi/\tau+2^{l-10}]$ and it follows that $||\sg_1+\sg_2|-|\xi_2||\leq 2^{l-7}$, and so the output to modulation $\sim 2^l$ vanishes.
\end{proof}

\begin{lemma}\label{lem:geom5}
	For $l\in\Z$, $r\geq l+10$
	$$
	Q_l(P_{\leq r-10}\vp\cdot Q_rw)=0.
	$$
\end{lemma}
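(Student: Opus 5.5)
The plan is to argue exactly as in Lemma \ref{lem:geom4}, via the convolution formula \eqref{conv}. Take a point $(\sg,\xi)=(\sg_1+\sg_2,\xi_2)$ in the support of the product, with $\sg_1$ in the support of $\ft(P_{\leq r-10}\vp)$ and $(\sg_2,\xi_2)$ in the support of $\tft(Q_rw)$. We have $\sg_2\in[-\pi/\tau,\pi/\tau]$, and by periodicity we may assume $\sg\in[-\pi/\tau,\pi/\tau]$ and $\sg_1\in[-2\pi/\tau,2\pi/\tau]$. The claim follows once we show that $\bigl||\sg|-|\xi|\bigr|\geq 2^{r-2}$ at every such point. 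Since $r\geq l+10$, this modulation is far from the support of $\vp_l$, so $Q_l$ annihilates the product.

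\emph{Classical case $\sg_1\in[-\pi/\tau,\pi/\tau]$.} Here $|\sg_1|\leq 2^{r-10}$, while $\bigl||\sg_2|-|\xi_2|\bigr|\in[2^{r-2},2^{r}]$ (up to the convention for $\vp_r$). The reverse triangle inequality gives
$$
\bigl||\sg_1+\sg_2|-|\xi_2|\bigr|\geq\bigl||\sg_2|-|\xi_2|\bigr|-|\sg_1|\geq 2^{r-2}-2^{r-10}\geq 2^{r-3},
$$
and $2^{r-3}\geq 2^{l+7}$, which lies outside the support of $\vp_l$.

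\emph{Wrap-around case $\sg_1\in[\pi/\tau,2\pi/\tau]$.} The case $\sg_1\in[-2\pi/\tau,-\pi/\tau]$ is symmetric. By $2\pi/\tau$-periodicity of $\ft\vp$, we have $|\sg_1-2\pi/\tau|\leq2^{r-10}$. Replace $\sg_1$ by $\sg_1'=\sg_1-2\pi/\tau$, so $|\sg_1'|\leq 2^{r-10}$. The output frequency is then $\sg'=\sg_1'+\sg_2=\sg-2\pi/\tau$, which generally lies outside $[-\pi/\tau,\pi/\tau]$. The condition $\sg=\sg'+2\pi/\tau\in[-\pi/\tau,\pi/\tau]$ forces $\sg_2\in[-\pi/\tau,-\pi/\tau+2^{r-10}]$, hence $|\sg_2|\geq\pi/\tau-2^{r-10}$ and $\sg\in[\pi/\tau-2^{r-10},\pi/\tau]$. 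Both $|\sg|$ and $|\sg_2|$ lie within $2^{r-10}$ of $\pi/\tau$, so $\bigl||\sg|-|\sg_2|\bigr|\leq 2^{r-9}$. Therefore
$$
\bigl||\sg|-|\xi_2|\bigr|\geq\bigl||\sg_2|-|\xi_2|\bigr|-2^{r-9}\geq2^{r-3}.
$$
This is again incompatible with modulation $\sim2^l$.

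The main point requiring care is the wrap-around case. Unlike Lemma \ref{lem:geom4}, where both modulations were small, here we must check that the periodic shift does not reduce the large modulation $2^r$ of $w$. It does not, because the shift only ever relates $\sg$ and $\sg_2$ near $\pm\pi/\tau$, where $|\cdot|$ changes by at most $2^{r-9}$. A second point to confirm is that $2^r\leq 4\pi/\tau$ is allowed, since modulations are bounded by $\sim\tau^{-1}$. If $2^{r-10}>\pi/\tau$, then $P_{\leq r-10}\vp$ contains all frequencies, and this edge case needs a separate check. In that regime $2^{r-2}$ exceeds every possible modulation $\bigl||\sg_2|-|\xi_2|\bigr|\lesssim\tau^{-1}$, because the frequency restriction gives $|\xi_2|\lesssim\tau^{-1}$. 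So $Q_rw=0$ and the statement is trivial.
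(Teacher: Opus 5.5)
Your proof is correct and is essentially the paper's argument: the classical case follows from the reverse triangle inequality, and in the wrap-around case you observe that $\sigma\approx\pi/\tau$ and $\sigma_2\approx-\pi/\tau$, so $|\sigma|\approx|\sigma_2|$ and the modulation $\sim 2^{r}$ of $w$ is preserved (the paper phrases this as a bound on $|(\sigma_1+\sigma_2)-(-\sigma_2)|$). Your separate edge case $2^{r-10}>\pi/\tau$ is unnecessary and need not use any frequency restriction, which the paper notes is not required for this lemma: there $|\sigma_1|\le\pi/\tau$ in the classical case and $\bigl||\sigma|-|\sigma_2|\bigr|\le\pi/\tau$ in the wrap-around case, both below $2^{r-10}$, so your main estimates apply verbatim.
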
 
\begin{proof}
	Again assume $\sg,\sg_2\in[-\pi/\tau,\pi/\tau]$. If $\sg_1\in[-\pi/\tau,\pi/\tau]$, the result follows from the inequality $||\sg_1+\sg_2|-|\xi_2||\geq||\sg_2|-|\xi_2||-|\sg_1|$.
	
	If $\sg_1\in[\pi/\tau,2\pi/\tau]$, we use that $\sg_2\leq\pi/\tau-\sg_1\leq -\pi/\tau+2^{r-10}$ to see that
	$$
	|(\sg_1+\sg_2)-(-\sg_2)|\leq|\sg_1-2\pi/\tau|+2|\sg_2+\pi/\tau|\leq 2^{r-9}.
	$$
	Then
	$
	||\sg_1+\sg_2|-|\xi_2||\geq|(-\sg_2)-|\xi_2||-|(\sg_1+\sg_2)-(-\sg_2)|\geq 2^{r-3}
	$,
	which proves the result.
\end{proof}

Next we consider the product of spacetime sequences $u=(u_n(x))_n$ and $v=(v_n(x))_n$, via the decomposition
$$
P_{\ko}u\cdot P_{\kt}v=\sum_{l\geq0}Q_l(P_{\ko}u\cdot P_{\kt}v).
$$
Here the $P_{k}$ represent spatial frequency cut-offs.

We let $(\sg_1,\xi_1)$ and $(\sg_2,\xi_2)$ denote points in the Fourier support of $P_{\ko}u$ and $P_{\kt}v$ respectively, and $(\sg,\xi)=(\sg_1+\sg_2,\xi_1+\xi_2)$ an arbitrary point in the support of the product. Again it suffices to consider $\sg,\,\sg_1\in[-\pi/\tau,\pi/\tau]$ and $\sg_2\in[-2\pi/\tau,2\pi/\tau]$. It is helpful to denote
$$
S_k:=[(2k-1)\pi/\tau,(2k+1)\pi/\tau]\times\R^3.
$$

\begin{lemma}\label{claim2}
	For $l\geq\kt+10$, $\ko\geq\kt+10$, it holds
	$$
	Q_l (Q_{\leq l-10}P_{\ko}u\cdot Q_{\leq l-10}P_{\kt}v)=0.
	$$
\end{lemma}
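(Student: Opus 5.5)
We argue on the Fourier side, following the pattern of Lemmas \ref{lem:geom4} and \ref{lem:geom5}. Let $(\sg_1,\xi_1)$ lie in the support of $\tft(Q_{\leq l-10}P_{\ko}u)$ and $(\sg_2,\xi_2)$ in the support of $\tft(Q_{\leq l-10}P_{\kt}v)$, with $\sg_1\in[-\pi/\tau,\pi/\tau]$ and $\sg_2\in[-2\pi/\tau,2\pi/\tau]$. The output point is $(\sg,\xi)=(\sg_1+\sg_2,\xi_1+\xi_2)$. The goal is to show that, after reducing $\sg$ to $[-\pi/\tau,\pi/\tau]$ by periodicity, $||\sg|-|\xi||\leq 2^{l-2}$. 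This forces $\vp_l(|\sg|-|\xi|)=0$.

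The hypotheses give $||\sg_i|-|\xi_i||\leq 2^{l-9}$ and $|\xi_2|\leq 2^{\kt}\leq 2^{l-10}$. Since $\ko\geq\kt+10$, we also have $|\xi|=|\xi_1|+O(2^{\kt})$.

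\textbf{Classical case $\sg_2\in[-\pi/\tau,\pi/\tau]$.} Here no wrapping occurs. We have $|\sg_2|\leq|\xi_2|+2^{l-9}\lesssim 2^{l-9}$, so
$$
||\sg|-|\xi||\leq ||\sg_1|-|\xi_1||+|\sg_2|+|\xi_2|\leq 2^{l-9}+2^{l-8}+2^{l-10}<2^{l-6}.
$$
The modulation of the output therefore cannot be of size $2^l$, and this case is done.

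\textbf{Wrapped case $\sg_2\in[\pi/\tau,2\pi/\tau]$.} The case $\sg_2\leq-\pi/\tau$ is symmetric. Since the $\sg_2$-support is periodic, the point $\sg_2-2\pi/\tau\in[-\pi/\tau,0]$ is also in the support, and it satisfies $||\sg_2-2\pi/\tau|-|\xi_2||\leq 2^{l-9}$. Thus $|\sg_2-2\pi/\tau|\leq 2^{l-8}$, and the shifted frequency $\sg_1+\sg_2-2\pi/\tau$ equals $\sg_1$ up to an error of $2^{l-8}$.

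There are two sub-cases.
\begin{itemize}
\item If $\sg=\sg_1+\sg_2-2\pi/\tau$ already lies in $[-\pi/\tau,\pi/\tau]$, the estimate is exactly as in the classical case.
\item Otherwise $\sg_1+\sg_2-2\pi/\tau$ lies just outside $[-\pi/\tau,\pi/\tau]$, within $2^{l-8}$ of $\pm\pi/\tau$. Then $\sg_1$ is within $2^{l-8}$ of $\pm\pi/\tau$, so $|\xi_1|$ is within $2^{l-7}$ of $\pi/\tau$. The output frequency is then represented either by $\sg_1+\sg_2-2\pi/\tau$ or by $\sg_1+\sg_2-4\pi/\tau$ (respectively $\sg_1+\sg_2$), and both have absolute value within $2^{l-7}$ of $\pi/\tau$. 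Since $|\xi|$ is within $2^{l-7}+2^{l-10}$ of $\pi/\tau$, we get $||\sg|-|\xi||\leq 2^{l-5}$. This is the same boundary argument as in Lemma \ref{lem:geom4}.
\end{itemize}
Note that in the statement's setting the frequency truncation $|\xi|\leq\pi/4\tau$ is not needed, consistent with the remark before Lemma \ref{lem:geom4}.

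\textbf{Main obstacle.} The delicate step is this wrap-around bookkeeping: correctly identifying the representative of $\sg$ in $[-\pi/\tau,\pi/\tau]$ when $\sg_1$ sits near $\pm\pi/\tau$. The point to exploit is that $|\sg_1|$ and $|\xi_1|$ agree to within $2^{l-9}$, so the boundary value $\pi/\tau$ is reached simultaneously by $|\sg|$ and $|\xi|$. The remaining constant-chasing should close with margin, because all errors are at most $2^{l-5}$, well below $2^{l-1}$.
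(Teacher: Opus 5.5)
Your proposal is correct and follows essentially the paper's route: a case analysis on the Fourier supports, where the aliased configuration (the paper's $(\sg_2,\xi_2)\in S_{\pm1}$, your wrapped sub-case) is handled by showing that $|\sg|$ and $|\xi|$ both lie within about $2^{l-7}$ of $\pi/\tau$. One small slip: your ``classical'' case is not wrap-free, since for $\sg_1$ within $2^{l-8}$ of $\pm\pi/\tau$ the sum $\sg_1+\sg_2$ can leave $[-\pi/\tau,\pi/\tau]$. That configuration is covered verbatim by your second wrapped sub-case. It is also covered at once by noting that the $2\pi/\tau$-periodic extension of $\sg\mapsto|\sg|$ is $1$-Lipschitz, so with $\sg_2$ taken in $[-\pi/\tau,\pi/\tau]$ your classical-case bound holds for the reduced output in every configuration.
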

\begin{proof}
	Let $(\sg,\xi),\ (\sg_1,\xi_1)\in S_0$. If also $(\sg_2,\xi_2)\in S_0$, it is easy to see that
	\begin{align*}
		||\sg_1+\sg_2|-|\xi_1+\xi_2||\leq||\sg_1|-|\xi_1||+||\sg_2|-|\xi_2||+2|\xi_2|\leq 2^{l-7}.
	\end{align*}
	It is thus impossible for the output to fall at modulation $2^{l-2}\leq ||\sg|-|\xi||\leq 2^l$.
	
	The case $(\sg_2,\xi_2)\in S_{+1}$ is demonstrated in Figure \ref{fig:modulation}.
	\begin{figure}[t]
		\includegraphics[scale=0.3]{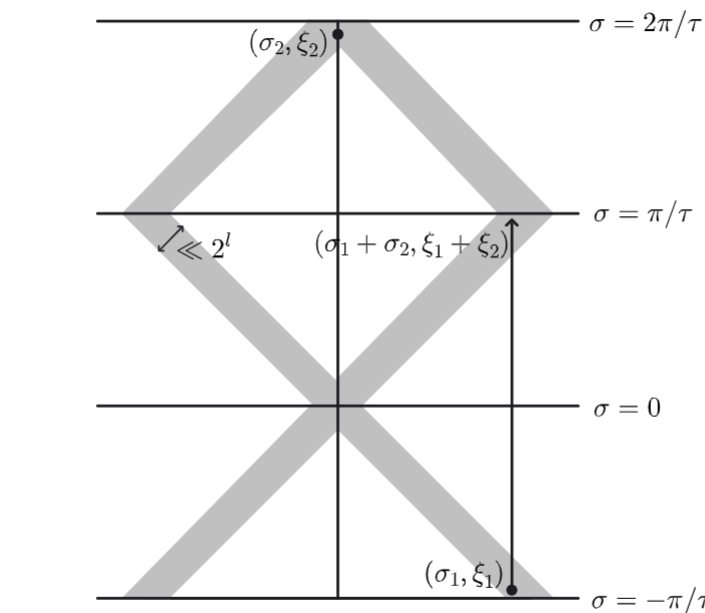}
		\caption{\label{fig:modulation}}
	\end{figure}
	Here the modulation cut-off on $v$ restricts to
	$$
	||\sg_2-2\pi/\tau|-|\xi_2||\leq2^{l-10}.
	$$
	Thus
	\begin{align}
		||\sg_1+\sg_2|-|\xi_1+\xi_2||&\leq||\sg_1+\sg_2|-\pi/\tau|+||\xi_1+\xi_2|-\pi/\tau|\nonumber\\
		&\leq ||\sg_2-2\pi/\tau|-|\xi_2||+|\sg_1+\pi/\tau|+||\xi_1|-\pi/\tau|+2|\xi_2|\nonumber\\
		&\leq ||\sg_2-2\pi/\tau|-|\xi_2||+2|\sg_1+\pi/\tau|+||\xi_1|-|\sg_1||+2|\xi_2|. \label{po9}
	\end{align}
	Then $\sg_1+\sg_2\leq\pi/\tau$ implies
	$$
	|\sg_1+\pi/\tau|\leq|2\pi/\tau-\sg_2|\leq ||2\pi/\tau-\sg_2|-|\xi_2||+|\xi_2|\leq 2^{l-9}.
	$$
	Substituting this into \eqref{po9}, we find $||\sg_1+\sg_2|-|\xi_1+\xi_2||\leq 2^{l-6}$, so it is impossible for the output to be at modulation $\geq 2^{l-2}$. The case $(\sg_2,\xi_2)\in S_{-1}$ can be treated similarly.
\end{proof}

\begin{lemma}\label{claim1}
	Let $\kt\leq\ko\leq\kt+10$, $l\geq\ko+10$. Then
	$$
	Q_l(Q_{\leq l-10}P_{\ko}u\cdot Q_{\leq l-10}P_{\kt}v)=0.
	$$
\end{lemma}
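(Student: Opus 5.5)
The plan is to follow the proof of Lemma \ref{claim2} closely. The only change is in the $S_0$ case: there the high-frequency input no longer helps, and I will instead use that both spatial frequencies are small compared to the output modulation $2^l$. Throughout, $(\sg_1,\xi_1)$ and $(\sg_2,\xi_2)$ lie in the Fourier supports of $Q_{\leq l-10}P_{\ko}u$ and $Q_{\leq l-10}P_{\kt}v$. By periodicity we may take $(\sg,\xi),(\sg_1,\xi_1)\in S_0$ and $\sg_2\in[-2\pi/\tau,2\pi/\tau]$. The support of $\psi_k$ gives $|\xi_i|\leq 2^{k_i}\leq 2^{\ko}\leq 2^{l-10}$. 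The goal in every case is
$$
\bigl||\sg_1+\sg_2|-|\xi_1+\xi_2|\bigr|\leq 2^{l-5},
$$
which is incompatible with the output lying in the support of $\vp_l(|\sg|-|\xi|)$, i.e.\ at modulation $\gtrsim 2^{l-2}$.

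\emph{Case $(\sg_2,\xi_2)\in S_0$.} This is the classical geometry. The modulation restriction gives $|\sg_i|\leq|\xi_i|+2^{l-10}$, so
$$
\bigl||\sg_1+\sg_2|-|\xi_1+\xi_2|\bigr|\leq|\sg_1|+|\sg_2|+|\xi_1|+|\xi_2|\leq 2(|\xi_1|+|\xi_2|)+2^{l-9}\leq 4\cdot2^{l-10}+2^{l-9}\leq 2^{l-6}.
$$
Unlike Lemma \ref{claim2}, we do not compare with $\bigl||\sg_1|-|\xi_1|\bigr|$. Both frequencies are $\lesssim 2^{\ko}\ll 2^l$, so every term is crudely bounded by a small multiple of $2^l$.

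\emph{Case $(\sg_2,\xi_2)\in S_{+1}$.} Here the modulation restriction on $v$ reads $\bigl||\sg_2-2\pi/\tau|-|\xi_2|\bigr|\leq 2^{l-10}$. I will reuse the chain of inequalities \eqref{po9} verbatim, since it only used the triangle inequality and the constraints $\sg_1+\sg_2\leq\pi/\tau$, $\sg_1\geq-\pi/\tau$. From $\sg_1+\sg_2\leq\pi/\tau$ we get
$$
|\sg_1+\pi/\tau|\leq|2\pi/\tau-\sg_2|\leq 2^{l-10}+|\xi_2|\leq 2^{l-9}.
$$
Substituting this into \eqref{po9}, together with $\bigl||\xi_1|-|\sg_1|\bigr|\leq 2^{l-10}$ and $2|\xi_2|\leq 2^{l-9}$, gives a total bound of at most $2^{l-10}+2^{l-8}+2^{l-10}+2^{l-9}\leq 2^{l-6}$. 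The case $S_{-1}$ is symmetric, with $\sg\mapsto-\sg$.

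The only point needing care is the bookkeeping of dyadic constants, so that every contribution is genuinely $\leq 2^{l-5}$ while the support of $\vp_l$ starts at $2^{l-2}$. One should also check that the wrap-around case really reduces to \eqref{po9}. In Lemma \ref{claim2} the term $2|\xi_2|$ was controlled using $l\geq\kt+10$. Here the same hypothesis follows from $l\geq\ko+10\geq\kt+10$, and $|\xi_1|$ enters only through $\bigl||\xi_1|-|\sg_1|\bigr|$, which the modulation cut-off controls. So no new geometric input is needed beyond Lemma \ref{claim2}.
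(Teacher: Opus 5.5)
Your proof is correct. In the case $(\sg_2,\xi_2)\in S_0$ it is the same crude triangle-inequality bound the paper uses.

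In the wrap-around case $(\sg_2,\xi_2)\in S_{+1}$ you argue differently from the paper.

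\emph{The paper's route.} It first uses that the output sits at modulation $\geq 2^{l-2}$ while $|\xi|\leq 2^{l-9}$, which gives $\pi/\tau\geq|\sg|\geq 2^{l-3}$. It then combines $|\sg_2-2\pi/\tau|\leq 2^{l-9}$ with $|\sg_1|\leq 2^{l-9}$ to force $\sg_1+\sg_2>\pi/\tau$. This contradicts $(\sg,\xi)\in S_0$, so wrap-around interactions are simply absent.

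\emph{Your route.} You recycle the chain \eqref{po9} and bound the output modulation by $2^{l-7}$ directly. This is valid. Note, however, that \eqref{po9} is small here for a different reason than in Lemma \ref{claim2}. There, $|\xi_1|$ is close to $\pi/\tau$ because $\ko$ is large. Here, your two bounds $|\sg_1+\pi/\tau|\leq 2^{l-9}$ and $|\sg_1|\leq|\xi_1|+2^{l-10}\leq 2^{l-9}$ together force $\pi/\tau\leq 2^{l-8}$. Consequently every term in the chain, including the intermediate $\bigl||\xi_1|-\pi/\tau\bigr|$, is $O(2^{l-8})$.

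\emph{Comparison.} Your version is more economical: no new computation and one uniform bound covering all cases. The paper's version makes the underlying dichotomy explicit. A wrap-around requires $\pi/\tau\leq 2^{l-8}$, whereas an output at modulation $2^l$ requires $\pi/\tau\geq 2^{l-3}$, and these are incompatible.
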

\begin{proof}
	As before we assume that $(\sg,\xi),\ (\sg_1,\xi_1)\in S_0$. If $(\sg_2,\xi_2)\in S_0$, the argument is as in the continuous case:
	\begin{align*}
		||\sg_1+\sg_2|-|\xi_1+\xi_2||\leq||\sg_2|-|\xi_2||+|\sg_1|+|\xi_1|	
		\leq 2^{l-8}
	\end{align*}
	showing that the output $(\sg,\xi)$ cannot be at large modulation.
	
	When $(\sg_2,\xi_2)\in S_{+1}$, we note that the restriction $\sg\in S_0$ enforces $2^l\leq8\pi\tau^{-1}$. Indeed, under the assumptions we have
	$$
	\pi/\tau\geq|\sg_1+\sg_2|\geq||\sg_1+\sg_2|-|\xi_1+\xi_2||-|\xi_1+\xi_2|\geq 2^{l-2}-2^{l-9}\geq 2^{l-3}.
	$$
	Then
	$$
	|\sg_2-2\pi/\tau|\leq||\sg_2-2\pi/\tau|-|\xi_2||+|\xi_2|\leq 2^{l-9}
	$$
	and similarly
	$$
	|\sg_1|\leq||\sg_1|-|\xi_1||+|\xi_1|\leq 2^{l-9}.
	$$
	It follows that
	\begin{align*}
		\sg_1+\sg_2\geq2\pi/\tau-(2\pi/\tau-\sg_2)-|\sg_1|\geq2\pi/\tau-2^{l-9}-2^{l-9}> \pi/\tau,
	\end{align*}
	contradicting $(\sg,\xi)\in S_0$. The case $(\sg_2,\xi_2)\in S_{-1}$ is similar.
\end{proof}

\begin{lemma}\label{claim3}
	Let $j\geq\max\{r,\ko,\kt\}+10$ and $|j-l|\geq10$. Then
	$$
	Q_r(Q_jP_{\ko}u\cdot Q_lP_{\kt}v)=0.
	$$
\end{lemma}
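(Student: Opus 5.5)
Following Lemmas \ref{claim2} and \ref{claim1}, I would take $(\sg,\xi),(\sg_1,\xi_1)\in S_0$ and $\sg_2\in[-2\pi/\tau,2\pi/\tau]$. I will show that the output of $Q_jP_{\ko}u\cdot Q_lP_{\kt}v$ can never have modulation $\le 2^r$. The input modulations are $||\sg_1|-|\xi_1||\simeq 2^j$, and for $\sg_2\in S_0$ also $||\sg_2|-|\xi_2||\simeq 2^l$. For $\sg_2\in S_{\pm1}$ the second condition reads instead $||\sg_2\mp 2\pi/\tau|-|\xi_2||\simeq 2^l$. By symmetry it suffices to treat $l\le j-10$ and $l\ge j+10$ separately. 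In both cases the spatial frequencies satisfy $|\xi_1|,|\xi_2|\le 2^{\max\{\ko,\kt\}+1}\le 2^{j-9}$, and $|\xi|\le 2^{j-8}$.

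\textbf{Classical case $\sg_2\in S_0$.} Since $|\xi_1|\le 2^{j-9}$ while $||\sg_1|-|\xi_1||\ge 2^{j-2}$, we get $|\sg_1|\simeq 2^j$, indeed $|\sg_1|\in[2^{j-3},2^{j+1}]$. If $l\le j-10$, then $|\sg_2|\le 2^{l}+|\xi_2|\le 2^{j-8}$. Hence $|\sg_1+\sg_2|\ge 2^{j-3}-2^{j-8}$, while $|\xi_1+\xi_2|\le 2^{j-8}$, so the output modulation is $\gtrsim 2^{j-4}\ge 2^{r+6}$. If $l\ge j+10$, the roles are reversed: $|\sg_2|\simeq 2^l$ and $|\sg_1|\le 2^{j+1}\le 2^{l-9}$. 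Then $||\sg|-|\xi||\ge |\sg_2|-|\sg_1|-|\xi|\gtrsim 2^{l-3}\gg 2^r$. In either case $Q_r$ kills the product.

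\textbf{Aliased case $\sg_2\in S_{+1}$ ($S_{-1}$ is symmetric).} Write $\sg_2'=\sg_2-2\pi/\tau\in[-\pi/\tau,\pi/\tau]$. Its modulation is $||\sg_2'|-|\xi_2||\simeq 2^l$. The output frequency in $S_0$ is $\sg=\sg_1+\sg_2$, since $\sg_1+\sg_2\le\pi/\tau$ is forced. Equivalently, in the periodic picture, $\sg\equiv \sg_1+\sg_2'$ mod $2\pi/\tau$, and $\sg_1+\sg_2'\in[-2\pi/\tau,-\pi/\tau]$.

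Take first $l\le j-10$. Then $|\sg_2'|\le 2^{j-8}$, so $\sg_2\ge 2\pi/\tau-2^{j-8}$. The constraint $\sg\le\pi/\tau$ gives $\sg_1\le -\pi/\tau+2^{j-8}$, so $|\sg_1|\ge \pi/\tau-2^{j-8}$. Combined with $|\sg_1|\le 2^{j+1}$, this forces $2^j\gtrsim\tau^{-1}$. Now $\sg=\sg_1+\sg_2$ lies within $2^{j-7}$ of $\pi/\tau$, and so $|\sg|\ge \pi/\tau-2^{j-7}$. The key use of the frequency bound is $|\xi|\le 2^{j-8}$. This gives
$$
||\sg|-|\xi||\ge \pi/\tau-2^{j-6}\ge 2^{j-4}\gg 2^r,
$$
using $2^j\lesssim \tau^{-1}$, which holds because $|\sg_1|\le\pi/\tau$ and $|\xi_1|$ is small. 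The case $l\ge j+10$ is symmetric, now with $|\sg_1|$ small and $|\sg_2'|\simeq 2^l\simeq\tau^{-1}$.

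\textbf{Main obstacle.} The delicate point is the aliased configuration, where the output sits near $\sg\approx\pm\pi/\tau$. There I must check carefully that the constants work when $2^j$ is comparable to $\tau^{-1}$, that is, that $\pi/\tau-2^{j-6}$ is still $\gtrsim 2^{j-4}$. This is where the restriction $2^j\le 4\pi/\tau$ and the smallness $|\xi|\le 2^{j-8}$ must be balanced. If the constants are too tight, I would redo the estimate with the exact dyadic ranges, $2^{j-2}\le\cdot\le 2^j$. An alternative is to note that $|\sg|\ge|\sg_1|-|\sg_2'|$ holds directly via $2\pi/\tau$-periodicity. Indeed $|\sg|=|\sg_1+\sg_2'+2\pi/\tau|$, and on $S_0$ this is at least $\min(|\sg_1+\sg_2'|,\,2\pi/\tau-|\sg_1+\sg_2'|)$. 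Both quantities are at least $2^{j-4}$, since $2^{j+1}+2^{j-8}<2\pi/\tau-2^{j-4}$ whenever $2^{j}\le\pi/\tau$.
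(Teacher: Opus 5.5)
Your proof is correct and follows essentially the paper's argument. In the unaliased case $|\sigma_1|\simeq 2^j$ dominates everything else. In the aliased case the output satisfies $|\sigma|\geq \pi/\tau-O(2^{j-8})$, while $\pi/\tau\geq|\sigma_1|\geq 2^{j-3}$ dominates $|\xi|$. The paper reduces to $l\leq j-10$ by swapping the roles of $u$ and $v$, rather than treating $l\geq j+10$ separately.

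Your main constants already close, since $\pi/\tau-2^{j-6}\geq 2^{j-3}-2^{j-6}\geq 2^{j-4}$, so the worry in your last paragraph is unnecessary. The alternative offered there is in fact wrong as stated. It uses $|\sigma_1|\leq 2^{j+1}$, and its inequality fails for $2^j$ near $\pi/\tau$ (while $2^j\leq\pi/\tau$ is not known in any case). Using $|\sigma_1|\leq\pi/\tau$ instead, as the paper does, gives $|\sigma|\geq\pi/\tau-2^{j-8}$ directly.
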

\begin{proof}
	By symmetry it suffices to consider $l\leq j-10$.
	As usual suppose that $(\sg,\xi),\ (\sg_1,\xi_1)\in S_0$. If $(\sg_2,\xi_2)\in S_0$ also, then $||\sg_1|-|\xi_1||\geq 2^{j-2}\geq2^{-(j-8)}|\xi_1|$. Therefore $|\sg_1|>|\xi_1|$ and
	\begin{align*}
		||\sg_1+\sg_2|-|\xi_1+\xi_2||\geq||\sg_1|-|\xi_1||-|\sg_2|-|\xi_2|
		\geq 2^{j-2}-2^{j-8}>2^{r+7}.
	\end{align*}
	The result follows.
	
	Now suppose $(\sg_2,\xi_2)\in S_{+1}$. Note that $|\sg_1|\geq|\xi_1|$ so
	$
	\pi/\tau\geq|\sg_1|\geq ||\sg_1|-|\xi_1||\geq2^{j-1}.
	$
	Since $(\sg_2,\xi_2)\in S_{+1}$, the modulation cut-off imposes $|\sg_2-2\pi/\tau|\leq 2^{l}+2^{\kt}\leq 2^{j-9}$. We deduce that
	\begin{align*}
		||\sg_1+\sg_2|-|\xi_1+\xi_2||&\geq |\sg_1+\sg_2-2\pi/\tau+2\pi/\tau|-|\xi_1|-|\xi_2|\\
		&\geq 2\pi/\tau-|\sg_1|-|\sg_2-2\pi/\tau|-2^{\ko}-2^{\kt}\\
		&\geq 2\pi/\tau-\pi/\tau-2^{j-9}-2^{j-9}\geq2^{r+8}
	\end{align*}
	which completes the proof.
\end{proof}

\begin{lemma}\label{claim4}
	Suppose $\kt\geq\ko+10$, $j\geq \ko+10$. Then for $l\leq j-10$ and any $r\leq j-10$,
	$$
	Q_l(Q_rP_{\ko}u\cdot Q_jP_{\kt}v)=0.
	$$
\end{lemma}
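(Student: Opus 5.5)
The plan is to argue exactly as in Lemmas \ref{lem:geom5} and \ref{claim3}: show directly that every point in the Fourier support of the product has modulation much larger than $2^l$. This works because the high-modulation factor $Q_jP_{\kt}v$ dominates everything else. Let $(\sg_1,\xi_1)$ be a point in the support of $Q_rP_{\ko}u$ and $(\sg_2,\xi_2)$ a point in the support of $Q_jP_{\kt}v$, so the output point is $(\sg,\xi)=(\sg_1+\sg_2,\xi_1+\xi_2)$. By periodicity I will normalise $(\sg,\xi),(\sg_2,\xi_2)\in S_0$ and $\sg_1\in[-2\pi/\tau,2\pi/\tau]$. Here the roles are swapped relative to the convention stated before Lemma \ref{claim2}; this is harmless, since the product is symmetric and the low-frequency, low-modulation factor $u$ plays the role of $\vp$ in Lemma \ref{lem:geom5}.

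The constraints available are as follows. The support of $\psi_j$ gives $\bigl||\sg_2|-|\xi_2|\bigr|\geq 2^{j-2}$. We also have $|\xi_1|\leq 2^{\ko}\leq 2^{j-10}$, and $\bigl||\sg_1'|-|\xi_1|\bigr|\leq 2^r\leq 2^{j-10}$, where $\sg_1'$ is the representative of $\sg_1$ in $[-\pi/\tau,\pi/\tau]$. Together these give $|\sg_1'|\leq 2^{j-9}$. The hypothesis $\kt\geq\ko+10$ is not really needed beyond guaranteeing the above, since we only use $j\geq \ko+10$.

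\emph{Case $\sg_1\in[-\pi/\tau,\pi/\tau]$.} This is the classical case. Since $\xi=\xi_1+\xi_2$ and $\sg=\sg_1+\sg_2$, the triangle inequality gives
$$\bigl||\sg|-|\xi|\bigr|\geq \bigl||\sg_2|-|\xi_2|\bigr|-|\sg_1|-|\xi_1|\geq 2^{j-2}-2^{j-9}-2^{j-10}>2^{j-3}\geq 2^{l+7}.$$
So the output lies outside the support of the $Q_l$ multiplier.

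\emph{Case $\sg_1\in[\pi/\tau,2\pi/\tau]$; the case $[-2\pi/\tau,-\pi/\tau]$ is symmetric.} Now $|\sg_1-2\pi/\tau|\leq 2^{j-9}$. Since $\sg\leq\pi/\tau$ and $\sg_2\geq-\pi/\tau$, we get $\sg_2+\pi/\tau\leq \pi/\tau-\sg_1+\pi/\tau\leq 2^{j-9}$, so $\sg_2$ lies within $2^{j-9}$ of $-\pi/\tau$. As in Lemma \ref{lem:geom5}, this yields
$$|\sg-(-\sg_2)|\leq|\sg_1-2\pi/\tau|+2|\sg_2+\pi/\tau|\leq 3\cdot2^{j-9},$$
and hence $\bigl||\sg|-|\sg_2|\bigr|\leq 2^{j-7}$. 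Therefore
$$\bigl||\sg|-|\xi|\bigr|\geq\bigl||\sg_2|-|\xi_2|\bigr|-\bigl||\sg|-|\sg_2|\bigr|-|\xi_1|\geq 2^{j-2}-2^{j-7}-2^{j-10}>2^{j-3}\geq 2^{l+7},$$
and again the output misses modulation $\sim2^l$.

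The only delicate step is this wrap-around case. There one must check that translating $\sg_1$ by $2\pi/\tau$ forces $\sg_2$ (and hence $\sg$) into a $2^{j-9}$-neighbourhood of $\mp\pi/\tau$, so that $|\sg|$ and $|\sg_2|$ still agree up to $O(2^{j-7})$. Once that is established, the estimate is the same triangle inequality as in the classical case. The numerical constants (the losses of $2^{-7},2^{-9},2^{-10}$) need to be tracked so that the final lower bound stays above $2^{l}$ given $l\leq j-10$.
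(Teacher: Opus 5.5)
Your proof is correct. It differs from the paper's only in which factor is allowed to wrap around.

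\textbf{The paper's route.} It keeps the output and the low factor $(\sg_1,\xi_1)$ in $S_0$, following its convention from Lemma \ref{claim2} on, and lets the high-modulation factor $(\sg_2,\xi_2)$ fall in $S_{\pm1}$. It must then split on the sign of $|\sg_2-2\pi/\tau|-|\xi_2|$. If this is positive, then $|\sg_2|-|\xi_2|\geq 2^{j-2}$ and the classical triangle inequality applies. If it is negative, smallness of $\sg_1$ gives $2\pi/\tau-\sg_2\geq\pi/\tau-2^{j-9}$. This forces $|\xi_2|$, and hence $|\xi|$, to exceed $\pi/\tau\geq|\sg|$ by $\gtrsim 2^j$.

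\textbf{Your route.} You normalise the high-modulation factor into $S_0$ and let the low-frequency, low-modulation factor wrap, which is exactly the mechanism of Lemma \ref{lem:geom5}. Its representative lies within $2^{j-9}$ of $0$, so the wrap pins $\sg$ and $\sg_2$ near opposite endpoints $\pm\pi/\tau$. Then $|\sg|$ and $|\sg_2|$ agree to within $3\cdot2^{j-9}$, and one triangle inequality finishes. Your constants check out: $253\cdot2^{j-10}$ and $249\cdot2^{j-10}$ both exceed $2^{j-3}\geq2^{l+7}$.

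\textbf{Comparison.} Your argument avoids the case split and the appeal to spatial frequencies above $\pi/\tau$. That subcase is vacuous under the restriction $|\xi|\leq\pi/4\tau$ used in Theorem \ref{thm:bilinear_estimates}, but the paper proves the lemma without that cut-off, and so do you. The paper's route has the advantage of a single normalisation across Lemmas \ref{claim2}--\ref{claim4}. Either normalisation is legitimate, since the periodic convolution can be parametrised by either factor's representative in $[-\pi/\tau,\pi/\tau]$. Neither argument actually uses $\kt\geq\ko+10$.
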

\begin{proof}
	We again start by assuming that all three pairs of Fourier variables belong to $S_0$. In this case the analysis is standard and we have
	\begin{align*}
		||\sg_1+\sg_2|-|\xi_1+\xi_2||\geq||\sg_2|-|\xi_2||-|\sg_1|-|\xi_1|\geq 2^{j-3},
	\end{align*}
	proving the result.
	
	Next we assume $(\sg_2,\xi_2)\in S_{+1}$, separately considering the cases $|\sg_2-2\pi/\tau|>|\xi_2|$ and $|\sg_2-2\pi/\tau|<|\xi_2|$. In the former case, since $\sg_2>\pi/\tau$, we have
	$$
	|\sg_2|-|\xi_2|\geq|\sg_2-2\pi/\tau|-|\xi_2|\geq2^{j-2}
	$$
	and the argument can be completed as in the case $(\sg_2,\xi_2)\in S_0$.
	
	If $|\sg_2-2\pi/\tau|<|\xi_2|$, first note that from $|\sg_1|\leq 2^{j-9}$ we have
	$$
	2\pi/\tau-\sg_2=2\pi/\tau-\sg+\sg_1\geq \pi/\tau-2^{j-9}.
	$$
	Then by the assumption $|\xi_2|-|2\pi/\tau-\sg_2|\geq 2^{j-2}$ we see that $|\xi_2|\geq\pi/\tau+2^{j-1}$, and so
	$$
	|\xi_1+\xi_2|\geq\pi/\tau+2^{j-1}-2^{\ko+1}\geq\pi/\tau+2^{j-2}.
	$$
	This yields
	$$
	||\xi_1+\xi_2|-|\sg_1+\sg_2||\geq(\pi/\tau+2^{j-2})-\pi/\tau= 2^{j-3}
	$$
	and the result follows.
\end{proof}

\subsection{Proof of Lemma \ref{lem:ctnty-time-cut-off}.}\label{subsec:ctny_time_cut_off}
We are now ready to prove Lemma \ref{lem:ctnty-time-cut-off}. We will only prove the discrete estimates, the continuous versions being similar. We will in fact prove the more general statement that whenever $\vp=(\vp_n)_n$ is a sequence satisfying
		\begin{equation}\label{eqn:condition}
		\|P_j\vp\|_{L^2_t}\lesssim_{s-s_1} (2^jT)^{s-s_1}2^{-j/2} \text{ for all  }j>0, \qquad \|P_{\leq k+10}\vp\|_{L^\infty_t}\lesssim_{s-s_1} (2^kT)^{s-s_1}
		\end{equation}
	for all $0<T<1$, $2^j,2^k\leq4\pi/\tau$ and some $s$, $s_1$ as in the statement of the lemma, it holds
	\begin{equation}\label{eqn:generalised_result}
		\|\vp (n\tau)\, F_n\|_{X^{s_1,b}_\tau}\lesssim_{s-s_1} T^{s-s_1}\|F_n\|_{\xsbt}.
	\end{equation}
	Note the conditions \eqref{eqn:condition} essentially require $\vp\in \dot{H}^{1/2-}$.
	
	We first check that the three functions under consideration indeed satisfy \eqref{eqn:condition}.
	\begin{enumerate}
		\item $\vp=\eta_T$: In this case $\|P_j\eta_T\|_{L^2_t}\lesssim (2^jT)^{s-s_1}2^{-j/2}$ holds for all $j\geq0$ and all $s-s_1\leq1$ thanks to the rapid decay of $\ft{\eta}$. The $L^\infty_t$ condition follows by Bernstein's inequality.
		\item $\vp=\chi_{[0,T]}$: Let $N=\lfloor T/\tau\rfloor$. An explicit computation shows that
		$$
		\ft{\chi}_{[0,T]}(\sg)=-2ie^{-i(N+1)\tau\sg/2}\f{\sin((N+1)\tau\sg /2)}{\dt(-\sg)}
		$$
		and it follows that \eqref{eqn:condition} is satisfied for any $0\leq s-s_1\leq 1$, again using Bernstein's inequality for the $L^\infty$ estimate.
		\item $\vp=\chi_{[0,\infty)}$:  The sharp cut-off has discrete Fourier transform
		$$
		\ft\chi_{[-,\infty)}(\sg)=\pi\dl_0-\text{p.v.}\Bigl(\f{1}{d_\tau(-\sg)}\Bigr)
		$$
		(extended periodically). The principal value is defined on $[-\pi/\tau,\pi/\tau]$ by
		$$
		\text{p.v.}\int_{-\pi/\tau}^{\pi/\tau}\frac{f(\sg)}{d_\tau(-\sg)}d\sg=\lim_{\dl\to0}\int_{\dl\leq|\sg|\leq\pi/\tau}\frac{f(\sg)}{d_\tau(-\sg)}d\sg.
		$$
		Thus $\|P_j\chi\|_{L^2_t}\lesssim 2^{-j/2}$ for all $j>0$, and since $\|\chi\|_{L^\infty_t}=1$ we deduce that \eqref{eqn:condition} is satisfied for $s-s_1=0$.
	\end{enumerate}
		
	\begin{proof}[Proof of \eqref{eqn:generalised_result}]
	Write
	$$
	\|\vp F\|_{X^{s_1,b}_\tau}^2\simeq\sum_{k,j\geq0}\|P_kQ_j(\vp F)\|_{X^{s_1,b}_\tau}^2
	$$
	and split into cases. 
	In an abuse of notation we will use $l\gg j$ to mean $l\geq j+10$ (or sometimes a larger constant if necessary), so that really $2^l\gg 2^j$. Similarly $l\sim j$ means $|l-j|<10$.
	\begin{enumerate}
		\item $k\ll j$ (in particular $j>0$): In this case
		$$
		\|P_kQ_j(\vp F)\|_{X^{s_1,b}_\tau}\lesssim 2^{(s_1+b)j}\|P_kQ_j(\vp F)\|_{L^2_tL^2_x}.
		$$
		First suppose $F=Q_{\ll j}F$. In this case we use Lemma \ref{lem:geom4} to restrict the frequencies of $\vp$ and use the Bernstein estimate Lemma \ref{lem:modulation_bernstein} to bound
		\begin{align*}
			2^{(s_1+b)j}\|P_kQ_j(\vp Q_{\ll j}F)\|_{L^2_tL^2_x}
			&\lesssim 2^{(s_1+b)j} \|P_{\gtrsim j}\vp\|_{L^2_t}\|P_kQ_{\ll j}F\|_{L^\infty_tL^2_x}\\
			&\lesssim 2^{(s_1+b)j}\bigl(\sum_{r\gtrsim j} (2^rT)^{s-s_1}2^{-r/2}\bigr)\bigl(\sum_{l\ll j}2^{l/2}\|P_kQ_lF\|_{L^2_tL^2_x}\bigr)\\
			&\lesssim T^{s-s_1}\sum_{l\ll j}2^{(s+b-1/2)(j-l)}\|P_kQ_lF\|_{\xsbt}.
		\end{align*}
		Thus by Cauchy-Schwarz we deduce
		$$
		\sum_{k\ll j}\|P_kQ_j(\vp Q_{\ll j}F)\|_{X^{s_1,b}_\tau}^2\lesssim T^{s-s_1}\|F\|_{X^{s,b}_\tau}
		$$
		since $s+b<1/2$.
		
		Next consider $F=Q_{\gtrsim j}F$. Here we use Lemma \ref{lem:geom5} to see that
		\begin{align*}
			2^{(s_1+b)j}\|P_kQ_j(\vp Q_{\gtrsim j}F)\|_{L^2_tL^2_x}&\lesssim 2^{(s_1+b)j}\sum_{l\gtrsim j}\|P_{\lesssim l}\vp\|_{L^\infty_t}\|P_kQ_lF\|_{L^2_tL^2_x}\\
			&\lesssim T^{s-s_1}\sum_{l\gtrsim j}2^{(s_1+b)(j-l)}\|P_kQ_lF\|_{\xsbt}
		\end{align*}
		which is again acceptable thanks to $s_1+b>0$.
		
		\item We now suppose $k\gtrsim j$, so that
		$$
		\|P_kQ_j(\vp F)\|_{X^{s_1,b}_\tau}\lesssim 2^{s_1k}2^{bj}\|P_kQ_j(\vp F)\|_{L^2_tL^2_x}.
		$$
		If $F=Q_{\gg j}F$, we apply Bernstein's inequality on the outer modulation and use Lemma \ref{lem:geom5} bound
		\begin{align*}
			2^{s_1k}2^{bj}\|P_kQ_j(\vp Q_{\gg j}F)\|_{L^2_tL^2_x}
			&\lesssim 2^{s_1k}2^{(b+1/2)j} \sum_{l\gg j} \|P_{\sim l}\vp\|_{L^2_t}\|P_kQ_lF\|_{L^2_tL^2_x}\\
			&\lesssim 2^{s_1k}2^{(b+1/2)j}\sum_{l\gg j} (2^lT)^{s-s_1}2^{-l/2}2^{-bl-s_1k-(s-s_1)l}\|P_kQ_lF\|_{\xsbt}\\
			&\lesssim T^{s-s_1}\sum_{l\gg j}2^{(b+1/2)(j-l)}\|P_kQ_lF\|_{\xsbt}
		\end{align*}
		which is again acceptable since $b>-1/2$.
		
		When $F=Q_{\ll j}F$, so in particular $j>0$, we use Lemmas \ref{lem:geom4} and \ref{lem:geom5} to estimate
		\begin{align*}
			2^{s_1k}2^{bj}\|P_kQ_j(\vp Q_{\ll j}F)\|_{L^2_tL^2_x}
			&\lesssim 2^{s_1k}2^{bj}\|P_{\sim j}\vp\|_{L^2_t}\sum_{l\ll j}\|P_kQ_lF\|_{L^\infty_t L^2_x}\\
			&\lesssim 2^{s_1k}2^{bj}(2^{j}T)^{s-s_1}2^{-j/2}\sum_{l\ll j}2^{l/2}2^{-sk}2^{-bl}\|P_kQ_lF\|_{\xsbt}\\
			&\lesssim T^{s-s_1}2^{(s-s_1)(j-k)}\sum_{l\ll j}2^{(b-1/2)(j-l)}\|P_kQ_lF\|_{\xsbt}
		\end{align*}
		which is acceptable since $b<1/2$.
		
		Lastly we study the case $F=Q_{\sim j}F$, where
		\begin{align*}
			2^{s_1k}2^{bj}\|P_kQ_j(\vp Q_{\sim j}F)\|_{L^2_tL^2_x}
			&\lesssim 2^{s_1k}2^{bj}\|P_{\lesssim j}\vp\|_{L^\infty_t}\|P_kQ_{\sim j}F\|_{L^2_tL^2_x}\\
			&\lesssim T^{s-s_1}2^{(s-s_1)(j-k)}\|P_kQ_{\sim j}F\|_{\xsbt}.
		\end{align*}
	\end{enumerate}
	This completes the proof of  \eqref{eqn:generalised_result} and so of Lemma \ref{lem:ctnty-time-cut-off}.
	\end{proof}

\subsection{Proof of Theorem \ref{thm:bilinear_estimates}.}
We will prove the discrete bilinear estimates following the argument laid out in \cite{marsden2024global} for the continuous case. We require one additional ingredient not necessary for Lemma \ref{lem:ctnty-time-cut-off}: the Strichartz estimates. These were proved for discrete wave equations by Ruff and Schnaubelt \cite{ruff} in dimension three. We use the notation
$$
\|f\|_{p,q}\equiv\|f_n(x)\|_{\ell^p_\tau L^q_x}= \Bigl(\tau\sum_n\|f_n(x)\|_{L^q_x}^p\Bigr)^{1/p}.
$$
\begin{theorem}[Semi-discrete Strichartz estimates for the wave operator, {\cite[Theorem 2.5]{ruff}}]\label{thm:strichartz}
	Let $2< p\leq\infty$, $2\leq q<\infty$ with $p^{-1}+q^{-1}\leq\half$. Set $\gamma=\f{3}{2}-\f{1}{p}-\f{3}{q}$.
	For $k\geq0$, $\tau\in(0,1)$ it holds
	\begin{align*}
		\|P_{k}e^{\pm in\tau|\na|}f\|_{\ell^p_\tau L^q_x}&\lesssim_{p,q}\|P_kf\|_{\dot{H}^\gamma_x}.
	\end{align*}
	Here $\F_x(e^{\pm in\tau|\na|}f)(\xi):=e^{in\tau|\xi|}\hat{f}(\xi)$. 
\end{theorem}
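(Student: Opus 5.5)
The statement is quoted from \cite[Theorem 2.5]{ruff}, so in the paper it can simply be invoked. If I had to prove it myself, I would compare the sampled flow with the continuous flow. The main tool would be the continuous Strichartz estimates for $e^{\pm it|\na|}$ on $\R\x\R^3$, which are available in the stated range $p^{-1}+q^{-1}\leq\half$ with the same $\gamma$. I would add a Bernstein-type argument in the time variable and, where that fails, a direct $TT^*$ argument on the lattice $\tau\Z$.

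\textbf{Step 1: the regime $2^k\tau\lesssim1$.} Set $g(t):=\|P_ke^{\pm it|\na|}f\|_{L^q_x}$. For $t\in[n\tau,(n+1)\tau]$ the fundamental theorem of calculus gives
$$
g(n\tau)\leq \tau^{-1}\int_{n\tau}^{(n+1)\tau}g(t)\,dt+\int_{n\tau}^{(n+1)\tau}\|\dd_tP_ke^{\pm it|\na|}f\|_{L^q_x}dt.
$$
Since $\dd_t e^{\pm it|\na|}P_k=\pm i|\na|e^{\pm it|\na|}P_k$, the second term is, after Bernstein, morally $2^k\tau$ times an average of $\|e^{\pm it|\na|}\tilde P_kf\|_{L^q_x}$ over the interval. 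Here $\tilde P_k$ is a fattened projection. I would apply H\"older on each interval and sum in $n$ in $\ell^p_\tau$. This gives
$$
\|P_ke^{\pm in\tau|\na|}f\|_{\ell^p_\tau L^q_x}\lesssim(1+2^k\tau)\|e^{\pm it|\na|}\tilde P_kf\|_{L^p_tL^q_x}.
$$
The continuous Strichartz estimate then bounds the right-hand side by $\|P_kf\|_{\dot H^\gamma}$. This settles every frequency relevant to the paper, since all our sequences live at $|\xi|\lesssim\tau^{-1}$.

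\textbf{Step 2: the regime $2^k\tau\gg1$.} Here sampling loses the factor $2^k\tau$, so I would argue directly by $TT^*$ on the measure space $(\tau\Z,\tau\cdot\text{counting})$. Consider the operator $T:f\mapsto (P_ke^{\pm in\tau|\na|}f)_n$. Then $TT^*$ has kernel $P_k^2e^{\pm i(n-m)\tau|\na|}$, which obeys the energy bound and the dispersive bound
$$
\|P_k^2e^{\pm it|\na|}\|_{L^1\to L^\infty}\lesssim 2^{3k}(1+2^k|t|)^{-1}.
$$
For $n\neq m$ this is $\lesssim 2^{2k}|t|^{-1}$ with $|t|=|n-m|\tau\geq\tau$. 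I would interpolate with energy and apply a discrete Hardy--Littlewood--Sobolev inequality in $\ell^p_\tau$. Alternatively, I would verify the hypotheses of the abstract Keel--Tao theorem with decay exponent $\sigma=1$, which works on any measure space. Either route bounds the off-diagonal part of $TT^*$ by $2^{2\gamma k}$ after rescaling $x\mapsto2^kx$.

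\textbf{Main obstacle: the diagonal $n=m$ in Step 2.} This is the step I expect to be hardest. The diagonal term is the operator $F_n\mapsto\tau P_k^2F_n$. Bounding it crudely from $\ell^{p'}_\tau L^{q'}$ to $\ell^p_\tau L^q$, using Bernstein in $x$ and the inclusion $\ell^{p'}_\tau\subset\ell^p_\tau$, gives $\tau^{2/p}2^{3k(1-2/q)}$. This exceeds $2^{2\gamma k}$ by the factor $(2^k\tau)^{2/p}$. So for $2^k\tau\gg1$ the argument has to avoid the diagonal, by applying $TT^*$ directly rather than splitting it off. The first thing I would try is to bound $T$ itself via a Christ--Kiselev or Keel--Tao bilinear decomposition, where only separations $|n-m|\tau\geq\tau$ enter the dyadic pieces and each single time slice is controlled by Bernstein. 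If that fails, I would accept a restricted statement, namely the estimate for $2^k\lesssim\tau^{-1}$ from Step 1. That restricted version suffices everywhere in this paper, since all frequencies are truncated to $|\xi|\leq\pi/4\tau$.
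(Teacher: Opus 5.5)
\textbf{Step 1 (correct, and a different route).} The paper gives no proof of this statement; it simply imports it from \cite{ruff}. Your Step 1 is correct and gives a self-contained, elementary alternative. Because the operator is the exact flow sampled on $\tau\Z$, the fundamental theorem of calculus on each $[n\tau,(n+1)\tau]$, together with Bernstein and H\"older, gives
\begin{equation*}
\|P_ke^{\pm in\tau|\na|}f\|_{\ell^p_\tau L^q_x}\lesssim(1+2^k\tau)\,\|e^{\pm it|\na|}P_kf\|_{L^p_tL^q_x}.
\end{equation*}
The continuous Strichartz estimate then finishes the argument. To land on $\|P_kf\|_{\dot H^\gamma}$ rather than $\|\tilde P_kf\|_{\dot H^\gamma}$, run the argument on $e^{\pm it|\na|}(P_kf)$ and use that $|\na|\tilde P_k$ has an $L^1$ kernel of size $\lesssim 2^k$. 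This proves the estimate for $2^k\lesssim\tau^{-1}$, which is exactly the range used in Proposition~\ref{transfer_prop}, where $2^k\leq\pi/2\tau$.

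\textbf{Step 2 (cannot be completed).} The diagonal is not a technical obstruction that a Christ--Kiselev or bilinear Keel--Tao decomposition could avoid. The inequality is false for $2^k\tau\gg1$ whenever $p<\infty$. Keeping only the slice $n=0$ gives
\begin{equation*}
\|P_ke^{\pm in\tau|\na|}f\|_{\ell^p_\tau L^q_x}\geq\tau^{1/p}\|P_kf\|_{L^q_x}.
\end{equation*}
Now take $\widehat{P_kf}$ to be a fixed bump rescaled to the annulus $|\xi|\sim2^k$. Then $\|P_kf\|_{L^q}\simeq2^{3k(\frac12-\frac1q)}\|P_kf\|_{L^2}$, while $\|P_kf\|_{\dot H^\gamma}\simeq2^{3k(\frac12-\frac1q)}2^{-k/p}\|P_kf\|_{L^2}$. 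The ratio is therefore $\gtrsim(2^k\tau)^{1/p}$, which is unbounded as $k\to\infty$ with $\tau$ fixed. Since a single time slice already violates the bound, no decomposition of $TT^*$ can help.

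For the same reason, the abstract Keel--Tao theorem does not hold ``on any measure space'' in the time variable. Time atoms of mass $\tau$ that are large compared with the dispersive time scale $2^{-k}$ are precisely what breaks it.

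Your own $TT^*$ bookkeeping is actually sharp. The off-diagonal part is fine (discrete HLS applies since $1-\frac2q\geq\frac2p$), and the diagonal yields exactly the loss $(1+2^k\tau)^{1/p}$, matching the example above.

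\textbf{Consequence.} The statement as transcribed, ``for $k\geq0$'', must be read with the restriction $2^k\lesssim\tau^{-1}$, or with the factor $(1+2^k\tau)^{1/p}$ on the right. Your ``fallback'' is therefore not a weaker substitute but the correct theorem. You should state it that way and drop the attempt to prove the unrestricted version.
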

The next proposition rephrases the Strichartz estimates in the $X^{s,b}_\tau$ setting. Our proof closely follows that of \cite[Proposition 26]{burzio2020long}.

\begin{proposition}[Semi-discrete transferred Strichartz estimates.]\label{transfer_prop}
	Let $b>1/2$, $\tau\in(0,1)$ and $k\geq0$ with $2^k\leq\pi/2\tau$. Then for $p$, $q$ as in Theorem \ref{thm:strichartz}, $s=3/2+s'$, it holds
	\begin{equation}\label{eqn:MB}
	\|P_ku_n\|_{\ell^p_\tau L^q_x}\lesssim_{p,q,s,b}2^{-(\f{1}{p}+\f{3}{q}+s')k}\|P_ku_n\|_{X^{s,b}_\tau}.
	\end{equation}
	Moreover, for $m\in\Z\backslash\{0\}$,
	\begin{equation}\label{eqn:D_MB}
	\|P_k D_{m\tau}u_n\|_{\ell^p_\tau L^q_x}\lesssim_{p,q,s,b}2^{(1-\f{1}{p}-\f{3}{q}-s')k}\|P_k u_n\|_{\xsbt}.
	\end{equation}
\end{proposition}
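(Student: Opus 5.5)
We follow the standard transference argument, as in \cite[Proposition 26]{burzio2020long}. The idea is to represent $P_ku_n$ as a superposition of modulated discrete free waves and then apply Theorem \ref{thm:strichartz} to each wave.

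First we split $\tft(P_ku_n)$ into its restrictions to $\{\sg\ge0\}$ and $\{\sg<0\}$, and treat the piece with $\sg\ge0$; the other is symmetric. Substituting $\sg=|\xi|+\lambda$ in the inversion formula \eqref{eqn:inversion} gives
$$
P_ku_n(x)=\f{1}{2\pi}\int_{\R} e^{in\tau\lambda}\,\bigl(e^{in\tau|\na|}f_\lambda\bigr)(x)\,d\lambda,\qquad \hat f_\lambda(\xi):=\mathbf 1_{\{0\le |\xi|+\lambda\le\pi/\tau\}}\,\tft(P_ku_n)(|\xi|+\lambda,\xi).
$$
Since $|\xi|\le 2^{k+1}\le\pi/\tau$, only $|\lambda|\lesssim\tau^{-1}$ contributes. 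The factor $e^{in\tau\lambda}$ has modulus one, so Minkowski's inequality and Theorem \ref{thm:strichartz} give
$$
\|P_ku_n\|_{\ell^p_\tau L^q_x}\lesssim\int\|P_kf_\lambda\|_{\dot H^\gamma}\,d\lambda\lesssim 2^{\gamma k}\int\|\hat f_\lambda\|_{L^2_\xi}\,d\lambda .
$$
Next we apply Cauchy--Schwarz in $\lambda$ with weight $\lan\lambda\ran^{b}$. Because $b>1/2$, the integral $\int\lan\lambda\ran^{-2b}d\lambda$ is finite uniformly in $\tau$. Undoing the change of variables then yields
$$
\int\|\hat f_\lambda\|_{L^2_\xi}\,d\lambda\lesssim\Bigl\|\lan|\sg|-|\xi|\ran^{b}\,\tft(P_ku_n)\Bigr\|_{L^2_{\sg,\xi}}.
$$
Here we must compare $\lan|\sg|-|\xi|\ran$ with $\lan\dt(|\sg|-|\xi|)\ran$. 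On the support, $|\xi|\le\pi/2\tau$ and $|\sg|\le\pi/\tau$, so $\bigl||\sg|-|\xi|\bigr|\le\pi/\tau$ and $|\sin x/x|$ stays bounded below. Hence $|\dt(|\sg|-|\xi|)|\simeq\bigl||\sg|-|\xi|\bigr|$, exactly as noted after \eqref{eqn:Bourgain_norm}.

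It remains to recover the factor $2^{-sk}$. On the Fourier support of $P_ku_n$ we have $\lan\dt(|\sg|+|\xi|)\ran\gtrsim\lan\xi\ran\simeq2^k$, so
$$
\|P_ku_n\|_{\ell^p_\tau L^q_x}\lesssim 2^{(\gamma-s)k}\|P_ku_n\|_{X^{s,b}_\tau}.
$$
The exponent is $\gamma-s=\tfrac32-\tfrac1p-\tfrac3q-\tfrac32-s'$, which is the one claimed in \eqref{eqn:MB}. For $k=0$ we use the inhomogeneous $\lan\xi\ran$; low frequencies are handled by $\|P_0f\|_{\dot H^\gamma}\lesssim\|P_0f\|_{L^2}$, valid since $\gamma\ge0$.

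For \eqref{eqn:D_MB} we have $\ft(D_{m\tau}u)(\sg)=\f{1-e^{-im\tau\sg}}{m\tau}\,\ft u(\sg)$, a multiplier of size at most $\min(|\sg|,2/(|m|\tau))\lesssim|\sg|$. We want to trade this multiplier for $2^k$, but $\sg$ can be as large as $\tau^{-1}$ while $\xi\sim2^k$. This is the \textbf{main obstacle}. To handle it we split $\sg$ into two regions.
\begin{enumerate}
\item In the region $|\sg|\le 4\cdot2^k$, the multiplier is $O(2^k)$, and we run the argument above with it included. The transference is unaffected: the multiplier depends on $\sg=|\xi|+\lambda$, and we absorb it into $\hat f_\lambda$ before taking norms.
\item In the region $|\sg|>4\cdot2^k$, we have $\bigl||\sg|-|\xi|\bigr|\simeq|\sg|$. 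Hence the multiplier is $\lesssim\lan\dt(|\sg|-|\xi|)\ran$, and $D_{m\tau}P_ku_n$ restricted to this region has $X^{s,b-1}_\tau$ norm $\lesssim\|P_ku_n\|_{X^{s,b}_\tau}$. We must then bound this piece in $\ell^p_\tau L^q_x$ without the modulation weight. Here we use that $\lan\dt(|\sg|+|\xi|)\ran\simeq|\sg|$ in this region, so the $s$-weight itself supplies the gain: we bound the multiplier by $|\sg|\lesssim 2^{k}\lan\dt(|\sg|+|\xi|)\ran/2^k$ and run the transference with the $s$-weight carrying the extra factor.
\end{enumerate}
If the bookkeeping in the high-$\sg$ region does not close, the fallback is Lemma \ref{lem:modulation_bernstein} (Bernstein in modulation) combined with Sobolev in $x$, summing dyadically in the modulation variable. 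In that region the $\langle\sg\rangle^{s}$ weight decays fast enough that this should give the same exponent.
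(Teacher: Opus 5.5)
Your proposal is correct. For \eqref{eqn:MB} it is the paper's argument: the paper decomposes $a=\sg\mp|\xi|$ dyadically, applies Cauchy--Schwarz on each block and sums $2^{(1/2-b)j}$, which is equivalent to your single weighted Cauchy--Schwarz. Your low-$|\sg|$ region for \eqref{eqn:D_MB} is likewise the paper's case of modulation $\lesssim 2^k$.

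You genuinely differ in the high-$|\sg|$ region. There the paper takes your stated fallback: it applies Lemma~\ref{lem:modulation_bernstein} with spatial Bernstein and sums over modulations $2^j\gg 2^k$. This gives $2^{3(1/2-1/q)k}\sum_{j\gg k}2^{(3/2-1/p-s-b)j}\|P_kQ_ju_n\|_{\xsbt}$, which beats the required bound by a factor $2^{(1/2-b)k}$ provided $s+b>3/2-1/p$.

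Your primary route, rerunning the transference, also closes, so you should state the step rather than hedge. For $|\sg|>4\cdot 2^k$ one has $|d_{m\tau}(-\sg)|\lesssim|\sg|\simeq\lan\dt(|\sg|+|\xi|)\ran$. Since $\lan\dt(|\sg|+|\xi|)\ran\gtrsim 2^k$ on the support and $s\geq 1$, it follows that $\lan\dt(|\sg|+|\xi|)\ran\lesssim 2^{(1-s)k}\lan\dt(|\sg|+|\xi|)\ran^{s}$. The transference then gives exactly $2^{(\gamma+1-s)k}\|P_ku_n\|_{\xsbt}$. This chain holds for every $\sg$ in the support, so you do not need to split at all.

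Drop the intermediate remark about $X^{s,b-1}_\tau$. With $b-1<1/2$ you cannot control $\int\|\hat f_\lambda\|_{L^2_\xi}\,d\lambda$, so that line is a dead end.

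Your argument is uniform and needs no modulation decomposition, but it uses $s\geq1$ and ignores the modulation weight in the high region. The paper's argument needs only Bernstein there and exploits $b$ to gain, at the cost of a case split and the condition $s+b>3/2-1/p$. Both hypotheses hold in the regime $s'>0$ where the proposition is applied.
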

In fact, by Bernstein's inequality we can also take $q=\infty$.
\begin{proof}
	We first consider \eqref{eqn:MB}. Write 
	\begin{align}
		P_ku_n(x)&=(2\pi)^{-4}\int_{\sg=-\pi/\tau}^{\pi/\tau}\int_{\xi\in\R^3}e^{i(x\cdot\xi+n\tau\sg)}\psi_k(\xi)\tft(u)(\sg,\xi)d\sg\,d\xi\nonumber\\
		&=(2\pi)^{-4}\iint e^{i(x\cdot\xi+n\tau\sg)}\psi_k(\xi)[\tft(u_{+})+\tft(u_{-})](\sg,\xi)d\sg\,d\xi\label{align2}
	\end{align}
	where
	$$
	\ft u_{\pm}(\sg,\xi):=\chi_{[0,\infty)}(\pm\sg)\tft(u)(\sg,\xi),
	$$
	extended periodically beyond $[-\pi/\tau,\pi/\tau]$. Make the change of variables $\sg=a\pm|\xi|$ to write
	\begin{align*}
		P_ku_n(x)&=(2\pi)^{-4}\int_{a=-\pi/\tau}^{\pi/\tau}e^{in\tau a}\int_{\xi} e^{i(x\cdot\xi+n\tau|\xi|)}\psi_k(\xi)\tft u_{+}(a+|\xi|,\xi)d\xi\,da\\
		&\quad+(2\pi)^{-4}\int_{a=-\pi/\tau}^{\pi/\tau}e^{in\tau a}\int_{\xi} e^{i(x\cdot\xi-n\tau|\xi|)}\psi_k(\xi)\tft u_{-}(a-|\xi|,\xi)d\xi\,da\\
		&=\f{1}{2\pi}\int_{a=-\pi/\tau}^{\pi/\tau}e^{in\tau a}[e^{in\tau|\na|}P_ku_{+}^{(a)}+e^{-in\tau|\na|}P_ku_{-}^{(a)}](x)da.
	\end{align*}
	Here
	$$
	\F_x u_{\pm}^{(a)}(\xi):=(\tft u_{\pm})(a\pm|\xi|,\xi).
	$$
	Now, Minkowski's inequality followed by the discrete Strichartz estimates yields
	\begin{align*}
		\|P_ku_n\|_{\ell^p_\tau L^q_x}&\lesssim\int_{a}\|P_ku_{+}^{(a)}\|_{\dot{H}^{\gamma}}+\|P_ku_{-}^{(a)}\|_{\dot{H}^\gamma}da\\
		&\lesssim\int_{a}\||\xi|^\gamma\psi_k(\xi)\tft u_{+}(a+|\xi|,\xi)\|_{L^2_\xi}+\||\xi|^\gamma\psi_k(\xi)\tft u_{-}(a-|\xi|,\xi)\|_{L^2_\xi}\,da.
	\end{align*}
	for $\gamma$ as in Theorem \ref{thm:strichartz}.
	Then
	\begin{align*}
		\int_{a=-\pi/\tau}^{\pi/\tau}\||\xi|^\gamma \psi_k(\xi)\tft u_+(a+|\xi|,\xi)\|_{L^2_\xi}\,da
		&\lesssim\sum_{j\geq0}\int_{a=-\pi/\tau}^{\pi/\tau}\psi_j(a)\||\xi|^\gamma\psi_k(\xi)\tft u_+(a+|\xi|,\xi)\|_{L^2_\xi}\,da\\
		&\lesssim\sum_{j\geq0}2^{j/2}\||\xi|^\gamma \psi_j(a)\psi_k(\xi)\tft u_+(a+|\xi|,\xi)\|_{L^2_{a,\xi,|a|\leq\pi/\tau}}\\
		&\lesssim\sum_{j\geq0}2^{j/2}\||\xi|^\gamma \psi_j(\sg-|\xi|)\psi_k(\xi)\tft u_+(\sg,\xi)\|_{L^2_{\sg,\xi,|\sg-|\xi||\leq\pi/\tau}}\\
		&\lesssim\sum_{j\geq0}2^{(\half-b)j}\|\lan\dt(\sg-|\xi|)\ran^b|\xi|^\gamma \psi_k(\xi)\tft u_+(\sg,\xi)\|_{L^2_{\sg,\xi,|\sg-|\xi||\leq\pi/\tau}}.
	\end{align*}
	By periodicity of the integrand in $\sigma$, this is equal to
	\begin{align*}
		\sum_{j\geq0}2^{(\half-b)j}\|\lan\dt(|\sg|-|\xi|)\ran^b|\xi|^\gamma \psi_k(\xi)\tft u_+(\sg,\xi)\|_{L^2_{\sg,\xi,|\sg|\leq\pi/\tau}}
	\end{align*}
	Since $b>1/2$ and $|\xi|\leq 2^k\leq\pi/2\tau$, this is bounded by $2^{(\gamma-s)k}\|P_ku\|_{X^{s,b}_\tau}$ as required. The term involving $u_-$ can be treated similarly.
	
	We now turn to \eqref{eqn:D_MB}. If the modulation of $u_n$ is at most comparable to $k$, the bound reduces immediately to the first statement using that $\tft(D_{m\tau}u_n)(\sg,\xi)=-d_{m\tau}(-\sg)\tft(u_n)(\sg,\xi)$.
	When the sequence $u_n$ is concentrated far from the lightcone the result follows from a direct application of Lemma \ref{lem:modulation_bernstein}.
\end{proof}

We can now prove Theorem \ref{thm:bilinear_estimates}. Having established Lemmas \ref{claim2}--\ref{claim4}, the proof is identical to the analysis in the continuous case. For this reason, we will only prove the first part of the theorem, \eqref{eqn:statement1}--\eqref{eqn:algebra-property}, using Lemmas \ref{claim2} and \ref{claim1}. The reader may consult for instance \cite[Appendix 2.C]{2024probabilistic} or \cite[Theorem 2.12]{geba2016introduction} for the proof of the second statement which requires Lemmas \ref{claim3} and \ref{claim4}.
\begin{proof}[Proof of \eqref{eqn:statement1}.]
	We may assume without loss of generality that $\ko\geq\kt$. We first consider the case $\ko\geq\kt+10$, so the whole term is at frequency $\sim 2^{\ko}$. First suppose the overall modulation is $\gg 2^{\ko}$, so we have to bound
	$$
	\Bigl\|\sum_{l\geq\ko+10}Q_lP_{\sim\ko}(P_{\ko}u\cdot P_{\kt}v)\Bigr\|_{\xsbt}.
	$$
	By Lemma \ref{claim2} we know that interactions of the type
	$$
	Q_l P_{\sim \ko}(Q_{\leq l-10}P_{\ko}u\cdot Q_{\leq l-10}P_{\kt}v)
	$$
	vanish, so at least one factor must be at relatively high modulation. First suppose that it is $P_{\ko}u$.
	Since $P_{\ko}u$ and $P_{\kt}v$ have frequencies truncated to $\pi/4\tau$, their product is at frequency at most $\pi/2\tau$. Thus $\lan\dt(|\sg|\pm|\xi|)\ran\simeq\lan|\sg|\pm|\xi|\ran$ we can estimate
	\begin{align*}
		\|Q_lP_{\sim\ko}(Q_{>l-10}P_{\ko}u\cdot P_{\kt}v)\|_{\xsbt}
		&\lesssim\sum_{j> l-10}2^{(s+b)l}\|Q_jP_{\ko}u\cdot P_{\kt}v\|_{\ell^2_\tau L^2_x}\\
		&\lesssim\sum_{j> l-10}2^{(s+b)l}\|Q_jP_{\ko}u\|_{\ell^2_\tau L^2_x}\|P_{\kt}v\|_{\ell^\infty_\tau L^\infty_x}\\
		&\lesssim2^{-s'\kt}\|P_{\kt}v\|_{\xsbt}\sum_{j>l-10}2^{(s+b)(l-j)}\|Q_jP_{\ko}u\|_{\xsbt}
	\end{align*}
	where we used the transferred Strichartz estimates to place $v$ into $X^{s,b}_\tau$, and directly placed $u$ into the Bourgain space using the frequency restrictions.
	Thus by the almost-orthogonality of the modulation cut-offs followed by the Cauchy-Schwarz inequality we obtain
	\begin{align*}
	\Bigl\|\sum_{l\geq\ko+10}Q_lP_{\sim\ko}(Q_{>l-10}P_{\ko}u\cdot P_{\kt}v)\Bigr\|_{\xsbt}^2&\lesssim\sum_{l\geq k+10}\|Q_lP_{\sim\ko}(Q_{>l-10}P_{\ko}u\cdot P_{\kt}v)\|_{\xsbt}^2\\
	&\lesssim2^{-s'\kt}\|P_{\ko}u\|_{\xsbt}\|P_{\kt}v\|_{\xsbt},
	\end{align*}
	which is better than required.
	When instead $P_{\kt}v$ is at high modulation, the same argument applies with the roles of $u$ and $v$ reversed.
	
	When the modulation is lower than in the previous case but still large relative to $\kt$, we again use Lemma \ref{claim2} to deduce that one of the two factors must be at comparable modulation, suppose it is $P_{\kt}v$. Then for $\kt+10\leq l\leq \ko+10$ we estimate
	\begin{align*}
	\|P_{\sim\ko}Q_l(P_{\ko}u\cdot Q_{> l-10}P_{\kt}v)\|_{\xsbt}&\lesssim\sum_{j> l-10}2^{s\ko}2^{bl}\|P_{\ko}u\|_{\infty,2}\|Q_{j}P_{\kt}v\|_{2,\infty}\\
	&\lesssim \sum_{j>l-10}2^{b(l-j)}2^{-s'\kt}\|P_{\ko}u\|_{\xsbt}\|Q_jP_{\kt}v\|_{\xsbt}
	\end{align*}
	where we use the Strichartz estimate to bound $\|P_{\ko}u\|_{\infty,2}\lesssim 2^{-s\ko}\|P_{\ko}u\|_{\xsbt}$, and Bernstein's inequality to estimate $\|Q_jP_{\kt}v\|_{2,\infty}\lesssim 2^{3\kt/2}\|Q_jP_{\kt}v\|_{2,2}$ and place $v$ into $\xsbt$. We thus obtain
	\begin{align*}
	\|P_{\sim\ko}Q_{\kt\ll\cdot\lesssim\ko}(P_{\ko}u\cdot Q_{> l-10}P_{\kt}v)\|_{\xsbt}\lesssim2^{-s'\kt}\|P_{\ko}u\|_{\xsbt}\|P_{\kt}v\|_{\xsbt}.
	\end{align*}
	
	A similar argument applies when instead $P_{\ko}u$ is at high modulation, placing $P_{\ko}u$ into $\ell^2_\tau L^2_x$ and $P_{\kt}v$ into $\ell^\infty_\tau L^\infty_x$.
	
	When the output modulation is small, $\leq 2^{\kt+10}$, there are no cancellations to consider but we must separately consider the cases where $P_{\ko}u$ appears at low or high modulation relative to the output. Let $l<\kt+10$. When $u$ appears at high modulation we estimate
	\begin{align*}
		\|P_{\sim \ko}Q_l(Q_{\gtrsim l}P_{\ko}u\cdot P_{\kt}v)\|_{\xsbt}&\lesssim 2^{s\ko}2^{bl}\|Q_{\gtrsim l}P_{\ko}u\|_{2,2}\|P_{\kt}v\|_{\infty,\infty}\\
		&\lesssim 2^{-s'\kt}\|P_{\kt}v\|_{\xsbt}\sum_{j\gtrsim l}2^{b(l-j)}\|Q_jP_{\ko}u\|_{\xsbt}
	\end{align*}
	which is acceptable. 
	
	When $u$ appears at low modulation, let $M>2$ and estimate
	\begin{align*}
		\|P_{\sim \ko}Q_l(Q_{\ll l}P_{\ko}u\cdot P_{\kt}v)\|_{\xsbt}&\lesssim\sum_{j\ll l}2^{s\ko}2^{bl}\|Q_jP_{\ko}u\|_{M,2}\|P_{\kt}v\|_{\f{2M}{M-2},\infty}\\
		&\lesssim 2^{b(l-\kt)}2^{-(s'-b'-1/M)\kt}\|P_{\ko}u\|_{\xsbt}\|P_{\kt}v\|_{\xsbt}.
	\end{align*}
	This gives the required upper bound upon setting $\eps=1/M$ and square-summing over $l<\kt+10$.
	
	We now turn to the case $\kt\leq\ko\leq\kt+10$. Note in this case the whole term is at frequency at most $2^{\ko+1}$. If the output modulation is at most comparable to $2^{\ko}$, we again take $M>2$ and estimate
	\begin{align*}
		\|P_{\lesssim\ko}Q_{\lesssim\ko}(P_{\ko}u\cdot P_{\kt}v)\|_{\xsbt}
		&\lesssim2^{(s+b)\ko}\|P_{\ko}u\|_{\f{2M}{M-2},\f{1}{\eps}}\|P_{\kt}v\|_{M,\f{2M}{M-2}}\\
		&\lesssim 2^{-(s'-b')\ko}\|P_{\ko}u\|_{\xsbt}\|P_{\kt}v\|_{\xsbt}
	\end{align*}
	using that $\ko\simeq\kt$.
	
	The final case to consider is when the output modulation $2^l$ is much larger than $2^{\ko}$. In this case Lemma \ref{claim1} shows that one of $u$ or $v$ must also be at high modulation. By symmetry we may suppose it is $u$. Then for $l\geq \ko+10$ we estimate
	\begin{align*}
		\|P_{\lesssim\ko}Q_{l}(Q_{\gtrsim l}P_{\ko}u\cdot P_{\kt}v)\|_{\xsbt}
		&\lesssim2^{(s+b)l}\|Q_{\gtrsim l}P_{\ko}u\|_{2,2}\|P_{\kt}v\|_{\infty,\infty}\\
		&\lesssim 2^{-s'\kt}\|P_{\kt}v\|_{\xsbt}\sum_{j\gtrsim l}2^{(s+b)(l-j)}\|Q_jP_{\ko}u\|_{\xsbt}
	\end{align*} 
	which is acceptable once square-summing $l\geq \ko+10$.
\end{proof}

To complete the proof of the first part of Theorem \ref{thm:bilinear_estimates}, i.e. \eqref{eqn:algebra-property}, we write
\begin{align*}
\|u\cdot v\|_{\xsbt}\lesssim\sum_{k\geq0}\|P_k(u\cdot v)\|_{\xsbt}^2
&\lesssim \sum_{k\geq0}\|P_k(P_{\ll k}u\cdot P_{\sim k}v)\|_{\xsbt}^2+\sum_{k\geq0}\|P_k(P_{\sim k}u\cdot P_{\ll k}v)\|_{\xsbt}^2\\
&\quad+\sum_{k\geq0}\|P_k(P_{\gtrsim k}u\cdot P_{\gtrsim k}v)\|_{\xsbt}^2
\end{align*}
where 
$$
P_k(P_{\gtrsim k}u\cdot P_{\gtrsim k}v)=\sum_{j\gtrsim k}P_k(P_j u\cdot P_{\sim j}v).
$$
The estimate then follows from \eqref{eqn:statement1} and liberal application of the Cauchy-Schwarz inequality.
	
\section{Trilinear estimates with gain in time}\label{sec:trilinear-estimates}
The goal of this section is to prove trilinear estimates which improve upon Theorem \ref{thm:bilinear_estimates} by a gain in time over small intervals, necessary for the bootstrap argument in Section \ref{section:main_theorem}. We will follow the argument laid out in \cite{marsden2024global}. 

First observe that multiplication by a smooth cut-off $\eta_T$ with rapidly decaying Fourier transform does not affect the geometry of the interactions in a serious way. This is expressed through the estimate
$$
\|P_l \eta_T(n\tau)\|_{\ell^p_\tau}\lesssim_{N,p}T^{1/p}(2^lT)^{-N}
$$
for $2\leq p\leq\infty$, $l\geq0$ and $N>0$. The case $p=2$ was already discussed in Section \ref{subsec:ctny_time_cut_off}, and the general case is straightforward via the Haussdorff--Young inequality.

The main result of this section is the following trilinear estimate which includes a gain over small time intervals. The upper limits on $s$ and $b$ ensure that no more than $1/2$ time derivatives ever hit the cut-off $\eta_T$, which can thus be controlled as in Lemma \ref{lem:ctnty-time-cut-off}. Note however that unlike in that lemma, the estimates below see no loss of regularity. This is thanks to the specific nonlinear structure of the expression being estimated.

Fix $p\in\R^3$. For a function $f\in p+X^{s,b}_\tau$, we abusively use $\|f\|_{\xsbt}$ to denote $\|f-p\|_{\xsbt}$.
\begin{theorem}\label{thm:trilinear_thm}
Let $s=3/2+s'$, $b=1/2+b'$ with $0<b'<s'$ and $s'+b'<1/2$.
Let $f,\,g,\,h\in p+X^{s,b}_{\tau}$, each with frequency supported in $\{|\xi|\leq \pi/(8\tau^{1/2})\}$, and let $\eta$ be a Schwarz function.
Denote
$$
\T(f,g,h)_n=-f_n(\Box_\tau(g\cdot h)_n-g_n\,\Box_\tau h_n-h_n\,\Box_\tau g_n)
$$
and let $T>0$. 
Then
\begin{equation}
\|\eta_T(n\tau) \T(f,g,h)_n\|_{\xsbtm}\lesssim_{\eta,s,b} T^{\alpha(s,b)} (1+\|f\|_{X^{s,b}_{\tau}})\|g\|_{X^{s,b}_{\tau}}\|h\|_{X^{s,b}_{\tau}}.\label{eqn:trilinear}
\end{equation}
where $\alpha(s,b)=\min\{b',(s'-b')/4,(1/2-(s'+b'))/2\}$.

If in fact $g$ or $h\in X^{s,b}$ directly (as opposed to $p+\xsbt$), the estimate still holds. When $f\in \xsbt$ we can replace $(1+\|f\|_{\xsbt})$ with $\|f\|_{\xsbt}$.

The analogous continuous estimates are also true.
\end{theorem}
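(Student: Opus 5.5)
We would prove \eqref{eqn:trilinear} by combining the algebra estimates of Theorem \ref{thm:bilinear_estimates} with the cut-off estimates of Lemma \ref{lem:ctnty-time-cut-off}, placing the factor $\eta_T$ so that it always costs a small power of $T$. First reduce to $p=0$: writing $f=p+\tilde f$ gives $\T(f,g,h)=p\,\T_0(g,h)+\T(\tilde f,g,h)$, where $\T_0(g,h)=-(\Box_\tau(gh)-g\Box_\tau h-h\Box_\tau g)$. Replacing $g,h$ by $g-p$, $h-p$ only changes $\T_0$ by terms of the form $p\cdot\Box_\tau(\cdot)-p\cdot\Box_\tau(\cdot)$, which cancel since $\Box_\tau p=0$. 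So it suffices to prove the bound with $(1+\|f\|)$ replaced by $\|f\|$ for $f\in\xsbt$, together with a separate bound for $\eta_T\T_0(g,h)$, which is the case ``$f=1$''.

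For the main term we plan to dyadically decompose in frequencies, $g=\sum P_{\ko}g$, $h=\sum P_{\kt}h$, and in output and input modulations, exactly as in the proof of \eqref{eqn:statement1}. We then split $\T_0$ into its three pieces. The pieces $g\,\Box_\tau h$ and $h\,\Box_\tau g$ are controlled by Lemma \ref{lem4}, which gives $\Box_\tau h\in\xsbtm$; this is valid because the frequency support is $\lesssim\tau^{-1/2}$. We then apply \eqref{eqn:xsbtm-property}. The piece $\Box_\tau(gh)$ is controlled by \eqref{eqn:algebra-property} followed by Lemma \ref{lem4}.

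The gain in $T$ comes from where we place $\eta_T$. In each term we put $\eta_T$ on a factor measured in $\xsbt$ with a slightly lowered exponent. Lemma \ref{lem:ctnty-time-cut-off}\eqref{item:1} cannot be applied directly, since it requires $s+b<1/2$. Instead we use the generalized estimate \eqref{eqn:generalised_result}, mimicking its proof with the dyadic geometry of Lemmas \ref{lem:geom4}--\ref{lem:geom5}. We split $\eta_T$ as $P_{\le j_0}\eta_T+P_{>j_0}\eta_T$ with $2^{j_0}\sim T^{-1}$. The low-modulation part is essentially constant on the relevant scale and acts through $\|\eta_T\|_{\ell^p_\tau}\lesssim T^{1/p}$. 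We exploit this via the transferred Strichartz estimates (Proposition \ref{transfer_prop}) with $p<\infty$: in the $\ell^p_\tau$ norms used in the bilinear proof we put one factor in $\ell^{p}$ rather than $\ell^\infty$ and gain $T^{1/p-}$. This is what produces the exponents $b'$ and $(s'-b')/4$, by interpolating the Strichartz exponents against the dyadic decay $2^{-(s'-b'-\eps)\min k}$. The high-modulation part of $\eta_T$ contributes at most half a time derivative, controlled since $s'+b'<1/2$, yielding the exponent $(1/2-(s'+b'))/2$.

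The main obstacle is the high-modulation output case of $\Box_\tau(gh)$. There the crude bound via Lemma \ref{lem4} loses the null structure, and it is not obvious where to extract the power of $T$. We would first try to write the discrete null form using \eqref{discreteNS} as
\[
2(D_\tau g\cdot D_\tau h_{\cdot-1}-\na g\cdot\na h)+\text{(remainders)}.
\]
We would then estimate this at low output modulation with \eqref{eqn:D_MB} in $\ell^{p}_\tau$ norms with $p<\infty$, which gives a $T$-gain. At high output modulation we would use the $\Box_\tau$ representation, where $2^{(b-1)l}$ absorbs $\eta_T$. The continuous analogue follows identically.
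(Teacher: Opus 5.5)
Your reduction to $p=0$ (via $\Box_\tau p=0$) and the no-gain bound $\|\T(f,g,h)\|_{\xsbtm}\lesssim\|f\|_{\xsbt}\|g\|_{\xsbt}\|h\|_{\xsbt}$ from \eqref{eqn:algebra-property}, \eqref{eqn:xsbtm-property} and Lemma \ref{lem4} are fine. But the proposal never actually produces the factor $T^{\alpha}$, which is the content of the theorem, and your plan for the critical interactions is inverted.

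\textbf{Placing $\eta_T$ on an input factor.} This cannot produce a gain. Since $b>1/2$, localizing a function in $\xsbt$ to a time interval of length $T$ costs $T^{1/2-b}$ (already for free waves). Both \eqref{eqn:generalised_result} and Lemma \ref{lem:ctnty-time-cut-off} require $b<1/2$ and $s+b<1/2$ for exactly this reason. The paper applies Lemma \ref{lem:ctnty-time-cut-off} only on the output space $\xsbtm$, with $s_1=s$, where $(s-1)+(b-1)=s'+b'<1/2$. There it removes $\eta_T$ at no cost, but it gains nothing.

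\textbf{The decisive low-modulation regime.} This is \eqref{eqn:LHH1} at output modulation $\lesssim 2^{\kt}$. You propose to use the expanded form \eqref{discreteNS} with \eqref{eqn:D_MB} in $\ell^p_\tau$, $p<\infty$. Estimating $\eta_T f\,Dg_{\kt}\cdot Dh_{\kth}$ factor by factor gives at best $T^{\frac12-\frac1{2M}}2^{(\frac12+\frac1{2M}-s')\kth}$. This is a positive power of $2^{\kth}$, since $s'<1/2$. It is summable only for $2^{\kth}\lesssim T^{-1}$. For $2^{\kth}\gg T^{-1}$ no Strichartz argument can work, since $s<2$ is exactly the range where the null structure is indispensable.

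The missing idea is the split at $2^{\kth}\sim T^{-1}$. Above it, you discard $\eta_T$ as just described and keep the $\Box_\tau$ form. The power of $T$ then comes from the dyadic decay in \eqref{eqn:statement1}--\eqref{eqn:statement2}, namely $2^{-(s'-b')\kth/2}\le T^{(s'-b')/2}$, or $T^{(s'-b')/4}2^{-(s'-b')\kt/4}$ when $\kt\gg k$. Only below the threshold does $\|\eta_T\|_{\ell^{2M/(M-1)}_\tau}\lesssim T^{\frac12-\frac1{2M}}$, combined with Strichartz on the low-frequency derivative, give the gain. Your phrase ``interpolating against the dyadic decay'' does not supply this split. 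Your explicit plan assigns the expanded form to precisely the regime where it fails.

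\textbf{The high output modulation regime.} Here the $\Box_\tau$ representation is unusable. $\eta_T$ does not commute with $\Box_\tau$. In the subcases where $\eta_T$ or $f$ carries the high modulation, the remaining factor $\Box_\tau(g\cdot h)$ is only controlled in $\xsbtm$ with $b-1<0$. It cannot be placed in the $\ell^\infty_\tau$-type norms that H\"older's inequality requires, so ``$2^{(b-1)l}$ absorbs $\eta_T$'' has no mechanism behind it.

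The paper instead expands into $f\,Dg\cdot Dh$. It uses Lemmas \ref{claim2}, \ref{claim1} and \ref{lem:geom4} to force one of the four factors $\eta_T,f,Dg,Dh$ to modulation (or time-frequency) $\gtrsim 2^{l-20}$. Each case is then closed with modulation Bernstein and \eqref{eqn:D_MB}.

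\textbf{The case \eqref{eqn:HLL}.} You also need to treat separately the case where $f$ carries the top frequency. There the null structure is of no help. The gain $T^{b'}$ comes from $\|\eta_T\|_{\ell^M_\tau}$ with $1/M=b'$, because the derivatives fall on the low frequencies.
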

\begin{proof}
Our first step is to perform a frequency decomposition on $\T_\tau(f,g,h)$. To reduce notation we will write $u_k$ for $P_ku$, and similar for $P_{\lesssim k}u$ etc.. Observe that
\begin{align}
\T(f,g,h)=\sum_{k\geq0}P_k \T_\tau(f,g,h)
&=\sum_{k\geq0}\sum_{\ko\geq0}P_k\T(f_{\ko},g_{\lesssim\ko},h_{\lesssim \ko})\label{eqn:HLL}\\
&\quad+\sum_{k\geq0}\sum_{\kt\geq0}P_k\T(f_{\ll \kt},g_{\kt},h_{\leq\kt})\label{eqn:LHH1}\\
&\quad+\sum_{k\geq0}\sum_{\kth\geq0}P_k\T(f_{\ll\kth},g_{\leq\kth},h_{\kth}).\label{eqn:LHH2}
\end{align}
The second and third lines above can be treated identically, so it suffices to study \eqref{eqn:HLL} and \eqref{eqn:LHH1}.

We start with \eqref{eqn:HLL}. We must bound
$$
\sum_{k\geq0}\Bigl(\sum_{\ko\geq0}\|P_k(\eta_T\, \T(f_{\ko},g_{\lesssim\ko},h_{\lesssim \ko}))\|_{\xsbtm}\Bigr)^2.
$$

This is one of the most straightforward cases since the derivatives appear at low frequency. Here the null structure is not helpful, so we reverse it via the identity
\begin{align}
	\T(f,g,h)_n
	&=f_n[2(D_\tau g_n\cdot D_\tau h_{n-1}-\na_x g_n\cdot\na_x h_n)-2 D_\tau g_n\cdot D_{2\tau}h_n+2D_\tau g_{n-1}D_{2\tau}h_n]\nonumber\\
	&\simeq f Dg\cdot Dh,\label{eqn:heuristic}
\end{align}
where $D$ could represent any of the derivatives $\na_x$ or $D_{\tau}$, $D_{2\tau}$, and we neglect the subscripts on $f$, $g$ and $h$ which may be shifted by an index.

First suppose that the term outputs to low modulation $\leq 2^{\ko+30}$. Noting that the highest frequency $\ko$ must be at least comparable to $k$, we have
\begin{align*}
&\sum_{k\geq0}\Bigl(\sum_{\ko\geq0}\|P_kQ_{\lesssim\ko}(\eta_T\, \T(f_{\ko},g_{\lesssim\ko},h_{\lesssim \ko}))\|_{\xsbtm}\Bigr)^2\\
&\lesssim \sum_{k\geq0}\Bigl(\sum_{\ko\gtrsim k}\sum_{l\lesssim\ko}2^{(s-1)\ko}2^{(b-1)l}2^{l/2}\|P_kQ_{\lesssim\ko}(\eta_T\, \T(f_{\ko},g_{\lesssim\ko},h_{\lesssim \ko}))\|_{\ell^1_\tau L^2_x}\Bigr)^2
\end{align*}
by the modulation Bernstein estimate.

Let $1/M:=b'$. Then, first assuming $\ko\neq0$ so that $f_{\ko}=(f-p)_{\ko}$ (while trivially $Dg_{\ll k}=D(g-p)_{\ll k}$, $Dh_{\ll k}=D(h-p)_{\ll k}$), we use the modulation Bernstein estimate on scale $2^l$ followed by the transferred Strichartz estimate \eqref{eqn:D_MB} to estimate
\begin{align*}
&\sum_{k\geq0}\Bigl(\sum_{\ko>0}\|P_kQ_{\lesssim\ko}(\eta_T\, \T(f_{\ko},g_{\lesssim\ko},h_{\lesssim \ko}))\|_{\xsbtm}\Bigr)^2\\
&\lesssim \sum_{k\geq0}\Bigl(\sum_{\substack{\ko\gtrsim k,\\\ko\neq0}}\sum_{l\lesssim\ko}2^{(s-1)\ko}2^{(b-1)l}2^{l/2}\|\eta_T\|_{M}\|f_{\ko}\|_{\infty,2}\|Dg_{\lesssim\ko}\|_{\f{2M}{M-1},\infty}\|Dh_{\lesssim\ko}\|_{\f{2M}{M-1},\infty}\Bigr)^2\\
&\lesssim (T^{1/M}\|f\|_{\xsbt}\|g\|_{\xsbt}\|h\|_{\xsbt})^2\sum_{k\geq0}\Bigl(\sum_{\substack{\ko\gtrsim k,\\\ko\neq0}}\sum_{l\lesssim\ko}2^{(s-1)\ko}2^{b'l}2^{-s\ko}2^{2(\half+\f{1}{2M}-s')\ko}\Bigr)^2\\
&\lesssim (T^{b'}\|f\|_{\xsbt}\|g\|_{\xsbt}\|h\|_{\xsbt})^2
\end{align*}
which is as required.

Since $f\in p+X^{s,b}$, we cannot place $f_{\ko}$ into $\ell^\infty_\tau L^2_x$ if $\ko=0$. However this is a small issue to overcome since such a case can only arise for $k\leq20$. Taking again $1/M=b'$, we then have
\begin{align*}
&\sum_{k\leq 20}\|P_kQ_{\leq30}(\eta_T\, \T(f_{0},g_{\leq10},h_{\leq10}))\|_{\xsbtm}^2\\
&\lesssim\sum_{k\leq20}(2^{2(s-1)k}\|\eta_T\|_{M}\|f_0\|_{\infty,\infty}\|Dg_{\leq10}\|_{\f{2M}{M-2},\infty}\|Dh_{\leq10}\|_{\infty,2})^2\\
&\lesssim (T^{b'}(1+\|f\|_{\xsbt})\|g\|_{\xsbt}\|h\|_{\xsbt})^2
\end{align*}
where we used that $\|f_0\|_{\infty,\infty}\leq|p|+\|f\|_{X^{s,b}_{\tau}}$ (with the understanding that $\|f\|_{\xsbt}$ means $\|f-p\|_{\xsbt}$) and $\|Dh\|_{\infty,2}\lesssim\|Dh_r\|_{\xsbt}$.
Henceforth we will not comment on the exceptions at $\ko=0$.

Now suppose the term is at large modulation, so we are studying
\begin{align}
&\sum_{k\geq0}\Bigl(\sum_{\ko\gtrsim k}\|P_kQ_{\gg\ko}(\eta_T\, \T(f_{\ko},g_{\lesssim\ko},h_{\lesssim \ko}))\|_{\xsbtm}\Bigr)^2\nonumber\\
&\leq \sum_{k\geq0}\Bigl(\sum_{\ko\gtrsim k}\sum_{l\gg\ko}\|P_kQ_l(\eta_T\, \T(f_{\ko},g_{\lesssim\ko},h_{\lesssim \ko}))\|_{\xsbtm}\Bigr)^2\label{eqn:high-mod}
\end{align}
By Lemmas \ref{claim2} and \ref{claim1} we have $Q_l(Q_{\leq l-10}u_r\cdot Q_{\leq l-10}v_s)=0$ whenever $l\geq\max\{r,s\}+10$. Iterating this we find that
$$
Q_l(Q_{\leq l-10}u_{\ko} \cdot Q_{\leq l-20}v_{\lesssim \ko}\cdot Q_{\leq l-20} w_{\lesssim \ko})=0
$$
for $l\geq \ko+30$.
Then by Lemma \ref{lem:geom4}, $Q_{>l-10}(P_{\leq l-20}\eta_T \cdot Q_{\leq l-20}f_{\sim \ko})=0$, so in fact
$$
Q_l(P^{(t)}_{\leq l-20}\eta_T \cdot Q_{\leq l-20}f_{\ko} \cdot Q_{\leq l-20}Dg_{\lesssim \ko}\cdot Q_{\leq l-20} Dh_{\lesssim \ko})=0.
$$
Therefore in \eqref{eqn:high-mod} at least one of the 4 factors must be at modulation (or frequency in the case of $\eta_T$) at least $2^{l-20}$. 

First suppose $\eta_T$ is at high frequency. If $s'\geq1/6$, define $N=(1/2+s'+b')/2$. If $s'<1/6$, set $N=(1/2(1+b'-2s')$. In both cases $N>s'+b'$. Then 
\begin{align*}
&\|P_kQ_l(P_{\gtrsim l}\eta_T\, \T(f_{\ko},g_{\lesssim\ko},h_{\lesssim \ko}))\|_{\xsbtm}\\
&2^{(s-1+b-1)l}\|P_{\gtrsim l}\eta_T\|_2\|f_{\ko}\|_{\infty,2}\|Dg_{\lesssim \ko}\|_{\infty,\infty}\|Dh_{\lesssim \ko}\|_{\infty,\infty}\\
&\lesssim 2^{(s'+b')l}T^{1/2}(2^lT)^{-N}2^{-s\ko}2^{2(1-s')\ko}\|f\|_{\xsbt}\|g\|_{\xsbt}\|h\|_{\xsbt}\\
&\lesssim T^{\min\{s'-b'/2,(1/2-(s'+b'))/2\}}2^{(s'+b'-N)l}2^{(1/2-3s')\ko}\|f\|_{\xsbt}\|g\|_{\xsbt}\|h\|_{\xsbt}
\end{align*}
which is acceptable when summed as in \eqref{eqn:high-mod}.

If rather $f_{\ko}$ appears at high modulation, set $1/M=s'$. We place the high modulation factor into $\ell^\f{2M}{M-2}_\tau L^2_x$ and then directly into $\xsbt$ via the modulation Bernstein estimate to bound
\begin{align*}
\|P_kQ_l(\eta_T\, Q_{\gtrsim l}f_{\ko}\, Dg_{\lesssim\ko}\, Dh_{\lesssim \ko}))\|_{\xsbtm}
&\lesssim2^{(s'+b')l}\|\eta_T\|_{M}\|Q_{\gtrsim l}f_{\ko}\|_{\f{2M}{M-2},2}\|Dg_{\lesssim \ko}\|_{\infty,\infty}\|Dh_{\lesssim\ko}\|_{\infty,\infty}\\
&\lesssim T^{s'}2^{-(2-s')l}2^{2(1-s')\ko}\|f\|_{\xsbt}\|g\|_{\xsbt}\|h\|_{\xsbt}
\end{align*}
which is acceptable.

When $g_{\lesssim\ko}$ is at high modulation, we take $1/M=s'+b'$ and have
\begin{align*}
	\|P_kQ_l(\eta_T\, f_{\ko}\, Q_{\gtrsim l}Dg_{\lesssim\ko}\, Dh_{\lesssim \ko}))\|_{\xsbtm}
	&\lesssim T^{s'+b'}\|f_{\ko}\|_{\infty,\infty}\|Q_{\gtrsim l}Dg_{\lesssim\ko}\|_{\f{2M}{M-2},2}\|Dh_{\lesssim\ko}\|_{\infty,\infty}\\
	&\lesssim T^{s'+b'}2^{-l}2^{(1-2s')\ko}(1+\|f\|_{\xsbt})\|g\|_{\xsbt}\|h\|_{\xsbt}
\end{align*}
The case where $h_{\lesssim\ko}$ is at high modulation is analogous.

This completes the study of \eqref{eqn:HLL} and we now turn to \eqref{eqn:LHH1}. 
We start with the more delicate case in which the output falls at low modulation $\lesssim 2^{\kt}$,
\begin{equation}\label{eqn:sum}
\sum_{k\geq0}\Bigl(\sum_{\kt\gtrsim k}\|P_kQ_{\lesssim\kt}(\eta_T\,\T(f_{\ll \kt},g_{\kt},h_{\leq \kt}))\|_{\xsbtm}\Bigr)^2.
\end{equation}
The most interesting interactions occur when $h=h_{\leq \kt}$ is at high frequency relative to $T^{-1}$. Here Strichartz arguments are insufficient and we must appeal to the null structure to replace the high frequency derivatives with low frequency derivatives along the lightcone. Re-writing $\T(f,g,h)$ in terms of the wave operator, we first use Lemma \ref{lem:ctnty-time-cut-off} to discard of the time cut-off:
\begin{align*}
	&\sum_{k\geq0}\Bigl(\sum_{\kt\gtrsim k}\|P_kQ_{\lesssim\kt}(\eta_T\,\T(f_{\ll \kt},g_{\kt},h_{-\log_2T\leq\cdot\leq \kt}))\|_{\xsbtm}\Bigr)^2\\
	&\lesssim\sum_{k\geq0}\Bigl(\sum_{\kt\gtrsim k}\sum_{-\log_2T\leq\kth\leq \kt}\|\T(f_{\ll \kt},g_{\kt},h_{\kth})\|_{\xsbtm}\Bigr)^2.
\end{align*}
We then apply the bilinear estimates of Theorem \ref{thm:bilinear_estimates} followed by Lemma \ref{lem4} with $\eps=(s'-b')/2$ to estimate
\begin{align*}
	\|\T(f_{\ll \kt},g_{\kt},h_{\kth})\|_{\xsbtm}
	&=\|f_{\ll \kt}\left(\Box_\tau(g_{\kt}\cdot h_{\kth})-g_{\kt}\cdot\Box_\tau h_{\kth}-h_{\kth}\cdot \Box_\tau g_{\kt}\right)\|_{\xsbtm}\\
	&\lesssim (1+\|f_{\ll \kt}\|_{\xsbt})\|\Box_\tau(g_{\kt}\cdot h_{\kth})-g_{\kt}\cdot\Box_\tau h_{\kth}-h_{\kth}\cdot \Box_\tau g_{\kt}\|_{\xsbtm}\\
	&\lesssim 2^{-(s'-b')\kth/2}(1+\|f\|_{\xsbt})\|g_{\kt}\|_{\xsbt}\|h_{\kth}\|_{\xsbt}.
\end{align*}
When $\kt\simeq k$ we directly estimate $2^{-(s'-b')\kth/2}\leq T^{(s'-b')/2}$ and the desired bound follows upon summing the estimate.
On the other hand when $\kt\gg k$ we must have that $\kth \simeq \kt$ and we bound
$$
2^{-(s'-b')\kth/2}\lesssim T^{(s'-b')/4}2^{-(s'-b')\kt/4},
$$
which again gives the desired result upon summation.

When $h$ falls rather at frequency $2^{\kth}\leq T^{-1}$, i.e. $h=h_{\leq \min\{-\log_2T,\kt\}}$, we set $\f{1}{M}=\f{s'+3b'}{4}\in(b',1/2)$ and estimate
\begin{align*}
&\|P_kQ_{\lesssim\kt}(\eta_T\,\T(f_{\ll \kt},g_{\kt},h_{\leq \min\{-\log_2T,\kt\}}))\|_{\xsbtm}\\
&\lesssim \sum_{l\lesssim\kt}2^{(s-1)\kt}2^{(b-1)l}2^{(\half-\f{1}{M})l}\|P_kQ_{\lesssim\kt}(\eta_T\,\T(f_{\ll \kt},g_{\kt},h_{\leq \min\{-\log_2T,\kt\}}))\|_{\ell^{\f{M}{M-1}}_\tau L^2_x}\\
&\lesssim \sum_{l\geq0}2^{(s-1)\kt}2^{(b'-\f{1}{M})l}\|\eta_T\|_{\f{2M}{M-1}}\|f_{\ll\kt}\|_{\infty,\infty}\|Dg_{\kt}\|_{\infty,2}\|Dh_{\leq \min\{-\log_2T,\kt\}}\|_{\f{2M}{M-1},\infty}\\
&\lesssim\sum_{\kth\lesssim -\log_2T} T^{\half-\f{1}{2M}}2^{(\half+\f{1}{2M}-s')\kth}(1+\|f\|_{\xsbt})\|g_{\kt}\|_{\xsbt}\|h_{\kth}\|_{\xsbt}.
\end{align*}
Now, if $\kt\simeq k$, we estimate
\begin{align*}
\sum_{\kth\lesssim -\log_2T} T^{\half-\f{1}{2M}}2^{(\half+\f{1}{2M}-s')\kth}(1+\|f\|_{\xsbt})\|g_{\kt}\|_{\xsbt}\|h_{\kth}\|_{\xsbt}
&\lesssim T^{3(s'-b')/4}(1+\|f\|_{\xsbt})\|g_{\kt}\|_{\xsbt}\|h\|_{\xsbt}
\end{align*}
which is acceptable upon summing as in \eqref{eqn:sum}.

If $\kt\gg k$, then $h$ must fall at a comparable frequency and we estimate
\begin{align*}
&\sum_{\substack{\kth\lesssim-\log_2T,\\\kth\sim\kt}}T^{\half-\f{1}{2M}}2^{(\half+\f{1}{2M}-\half(s'+b'))\kth}2^{-\half(s'-b')\kth}(1+\|f\|_{\xsbt})\|g_{\kt}\|_{\xsbt}\|h_{\kth}\|_{\xsbt}\\
&\lesssim T^{\half(s'+b')-\f{1}{M}}2^{-\f{1}{2}(s'-b')\kt}(1+\|f\|_{\xsbt})\|g\|_{\xsbt}\|h\|_{\xsbt}\\
&\lesssim T^{(s'-b')/4}(1+\|f\|_{\xsbt})\|g\|_{\xsbt}\|h\|_{\xsbt}
\end{align*}
as required. 
This concludes the study of \eqref{eqn:LHH2} with a low modulation output. 

The final case to consider is \eqref{eqn:LHH2} with a high modulation output,
\begin{equation}\label{eqn:sum2}
	\sum_{k\geq0}\Bigl(\sum_{\kt\gtrsim k}\sum_{l\gg\kt}\|P_kQ_l(\eta_T\,\T(f_{\ll \kt},g_{\kt},h_{\leq \kt}))\|_{\xsbtm}\Bigr)^2.
\end{equation}
As in the study of \eqref{eqn:LHH1}, we see that one of the four factors must be at comparably high modulation.

When $\eta_T$ is at high modulation, we consider two cases. If $s'>1/4$, we set $N=1/4+(s'+b')/2$. If $s'\leq1/4$, set $N=1/2-(s'-b')/4$. In either case, $N>s'+b'$. Then
\begin{align*}
	&
	\|P_kQ_l(P_{\gtrsim l}\eta_T\T(f_{\ll \kt},g_{\kt},h_{\leq\kt})\|_{\xsobot}\\
	&\lesssim 2^{(s'+b')l}\|P_{\gtrsim l}\eta_T\|_2\|f_{\ll\kt}\|_{\infty,\infty}\|Dg_{\kt}\|_{\infty,2}\|Dh_{\leq\kt}\|_{\infty,\infty}\\
	&\lesssim2^{(s'+b')l}T^{1/2}(2^lT)^{-N}2^{(1-s)\kt}2^{(1-s')\kt}(1+\|f\|_{\xsbt})\|g\|_{\xsbt}\|h\|_{\xsbt}\\
	&\lesssim (T^{\min\{(s'-b')/4,(1/2-(s'+b'))/2\}}2^{(s'+b'-N)l}2^{(1/2-2s')\ko}(1+\|f\|_{\xsbt})\|g\|_{\xsbt}\|h\|_{\xsbt})^2
\end{align*}
which is acceptable when summed as in \eqref{eqn:sum2}.

When $f_{\ll\kt}$ is at high modulation, set $1/M=b'$ and use Bernstein's inequality on the spatial variable of $f$ to estimate
\begin{align*}
	\|P_kQ_l(\eta_T\T(Q_{\gtrsim l}f_{\ll \kt},g_{\kt},h_{\leq\kt})\|_{\xsobot}
	&\lesssim2^{(s'+b')l}\|\eta_T\|_M\|Q_{\gtrsim l}f_{\ll\kt}\|_{\f{2M}{M-2},\infty}\|Dg_{\kt}\|_{\infty,2}\|Dh_{\leq\kt}\|_{\infty,\infty}\\
	&\lesssim T^{b'}2^{-(2-b')l}2^{(2-2s')\kt}(1+\|f\|_{\xsbt})\|g\|_{\xsbt}\|h\|_{\xsbt}.
\end{align*}

When $g_{\kt}$ is at high modulation we use the same choice of $M$ to bound
\begin{align*}
	\|P_kQ_l(\eta_T\T(f_{\ll \kt},Q_{\gtrsim l}g_{\kt},h_{\leq\kt})\|_{\xsobot}
	&\lesssim2^{(s'+b')l}\|\eta_T\|_M\|f_{\ll\kt}\|_{\infty,\infty}\|Q_{\gtrsim l}Dg_{\kt}\|_{\f{2M}{M-2},2}\|Dh_{\leq\kt}\|_{\infty,\infty}\\
	&\lesssim T^{b'}2^{-(1-b')l}2^{(1-s')\kt}(1+\|f\|_{\xsbt})\|g\|_{\xsbt}\|h\|_{\xsbt}.
\end{align*}

Finally, when $h_{\leq\kt}$ is at high modulation, we again take $1/M=b'$ and estimate
\begin{align*}
	\|P_kQ_l(\eta_T\T(f_{\ll \kt},g_{\kt},Q_{\gtrsim l} h_{\leq\kt})\|_{\xsobot}
	&\lesssim2^{(s'+b')l}\|\eta_T\|_M\|f_{\ll\kt}\|_{\infty,\infty}\|Dg_{\kt}\|_{\infty,\infty}\|Q_{\gtrsim l}Dh_{\leq\kt}\|_{\f{2M}{M-2},2}\\
	&\lesssim T^{b'}2^{-(1-b')l}2^{(1-s')\kt}(1+\|f\|_{\xsbt})\|g\|_{\xsbt}\|h\|_{\xsbt}
\end{align*}
which is again acceptable when summed as in \eqref{eqn:sum2}.

This completes the proof of \eqref{eqn:trilinear}. The remark which follows can be verified by inspection of the proof.
\end{proof}

\section{Error representation}\label{sec:error_representation}
Henceforth fix $0<\tau<1$, and $3/2<s_1<s<2$ with $s-s_1<2-s$. Then choose $b$ and $b_1$ such that $s-s_1<b-1/2<\min\{s-3/2,2-s\}$ and $1/2<b_1<b-(s-s_1)$. Note $s+b<5/2$ and $s_1-3/2>b_1-1/2>0$.

Fix $\uu_0\in (p+H^s(\R^3))\x H^{s-1}(\R^3)$ and $\uu\in p+\XX^{s,b}_{\text{loc}}([0,T_{\text{max}}))$ the maximal solution to the wave maps equation obtained in Theorem \ref{lwp}. Fix $0<T<T_{\text{max}}$ and let $M=\|\uu\|_{\XX^{s,b}([0,T])}$ (from now on, we omit any mention of the constant $p$ to reduce notation). Further fix a global extension $\uu^g$ of $\uu$ from $[0,T]$ to $\R$ such that $\|\uu^g\|_{\XX^{s,b}(\R)}\leq 2M$.

Henceforth $\eta$ denotes a fixed smooth function equal to $1$ on $[-1,1]$ and vanishing on $[-2,2]^c$. All implicit constants may depend on $s,\,s_1,\,b,\,b_1$ and $\uu_0$. Observe that $M$ depends only on $T$ (up to implicit constants).

Let $\uu_n$ denote the iterates obtained by the scheme \eqref{SS}.
Our first goal is to show that
\begin{equation}\label{eqn:goal}
\|\uu_n-\uu(n\tau)\|_{X^{s_1,b_1}_\tau([0,T_1])}\leq C(T) \tau^{(s-s_1)/2}
\end{equation}
for some $T_1(T)$ sufficiently small. At the end of the paper we will extend this to $[0,T]$ by iteration.
We will estimate \ref{eqn:goal} via the intermediate function $\uu_\tau$, solving 
a truncated version of \eqref{WM3},
\begin{equation*}
	\dd_t\uu_\tau=
	\begin{pmatrix}
		0 & 1\\
		\D & 0
	\end{pmatrix}
	\uu_\tau+\NN_\Pi(u_\tau),\hspace{3em}\uu_\tau(0)=\Pi\uu(0).
\end{equation*}
Here $\NN_\Pi=\Pi\NN\Pi$ for $\Pi:=\Pi_{\pi/(8\tau^{1/2})}$.
This admits the Duhamel formulation
\begin{equation}\label{eqn:u_tau_duhamel}
	\uu_\tau(t)=\LL_t\Pi\uu_0+\int_0^t\LL_{t-s}\NN_\Pi(u_\tau(s))ds.
\end{equation}
We first show that $\uu_\tau$ is a suitable approximation of $\uu$. Let $C_0$ denote the maximum of the constants appearing in \eqref{item1} and its continuous analogue, and $C_1$ as the maximum of the constants in \eqref{item2} and the continuous analogue. In a minor abuse of notation we will still denote $\T(f,g,h)=-f(\Box(gh)-g\Box h-h\Box g)$ in the continuous setting.
\begin{theorem}\label{thm:truncation_theorem}
	For $0<T_1(T)<\min\{T,1\}$ sufficiently small, \eqref{eqn:u_tau_duhamel} admits a unique solution $\uu_\tau\in X^{s,b}([0,T_1])$.	
	The solution has a global extension $\uu^g_\tau$ solving
	\begin{equation}\label{eqn:u^g_tau}
		\uu^g_\tau=\eta(t)\Bigl(\LL_t\Pi\uu_0+\int_0^t\LL_{t-s}\eta_{T_1}(s)\NN_\Pi(u^g_\tau(s))\,ds\Bigr)
	\end{equation}
	with $\|\uu^g_\tau\|_{\xxsb(\R)}\leq 2C_0C_1M$.
	
	Moreover, $\uu_\tau$ converges to $\uu$ as $\tau\to0$ in the sense that
	\begin{equation*}
		\|\uu-\uu_\tau\|_{X^{s_1,b_1}([0,T_1])}\leq 2C_1\|\uu(0)-\uu_\tau(0)\|_{H^{s_1}\times H^{s_1-1}}+C_2(1+M)^2\|(1-\Pi)u^g\|_{X^{s_1,b_1}(\R)}
		\lesssim_T\tau^{(s-s_1)/2}.
	\end{equation*}
	Here $C_2>0$ is a constant depending only on the parameters $s,\,s_1,\,b$ and $b_1$, and $\uu^g$ is the global extension of $\uu$ introduced at the start of this section.
\end{theorem}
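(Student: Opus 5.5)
We will run a contraction argument for the truncated equation in the continuous Bourgain space, using the continuous versions of Lemma \ref{lem:bourgain_prop}, the null-form identity \eqref{eqn:NS}, and the continuous trilinear estimate of Theorem \ref{thm:trilinear_thm}. The gain $T_1^{\alpha}$ in that estimate provides the smallness.

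\emph{Existence and the global extension.} Consider the map $\Phi$ given by the right-hand side of \eqref{eqn:u^g_tau}, acting on the ball $\{\|\vv\|_{\xxsb(\R)}\leq 2C_0C_1M\}$ in $p+\xxsb$.
\begin{itemize}
\item The linear part is bounded by $C_1\|\Pi\uu_0\|_{H^s\x H^{s-1}}\leq C_1C_0M$. Here we use \eqref{item2} for the continuous analogue, and \eqref{item1} together with $\|\uu\|_{\XX^{s,b}([0,T])}=M$ to bound the data.
\item For the Duhamel part we use \eqref{item3}. Since $\Pi$ is a bounded Fourier multiplier on every $X^{s,b}$, writing $\NN(u)=\T(u,u,u)$ through \eqref{WM2} gives
$$
\|\eta_{T_1}\NN_\Pi(v)\|_{X^{s-1,b-1}}\lesssim T_1^{\alpha}(1+\|v\|)\|v\|^2
$$
by the continuous version of \eqref{eqn:trilinear}. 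Because $b'$ and $s'+b'$ satisfy its hypotheses by the choice $b-1/2<\min\{s-3/2,2-s\}$, the right-hand side is at most $C_0C_1M$ for $T_1$ small depending on $M$.
\item Differences $\NN(v)-\NN(w)$ expand trilinearly into terms like $\T(v-w,v,v)$ and $\T(w,v-w,v)$. By the remark after Theorem \ref{thm:trilinear_thm}, these also carry the factor $T_1^\alpha$, so $\Phi$ is a contraction.
\end{itemize}
On $[0,T_1]$ we have $\eta=\eta_{T_1}=1$, so the fixed point restricts to a solution of \eqref{eqn:u_tau_duhamel}. Uniqueness in $X^{s,b}([0,T_1])$ follows by the standard argument: take extensions of two solutions and apply the same difference estimate on possibly shorter intervals, then continue.

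\emph{Convergence.} The exact solution satisfies the analogous identity on $[0,T_1]$,
$$
\uu^g=\eta(t)\Bigl(\LL_t\uu_0+\int_0^t\LL_{t-s}\eta_{T_1}\NN(u^g)\,ds\Bigr),
$$
after possibly re-choosing the extension, which is harmless since only restrictions matter. Subtracting from \eqref{eqn:u^g_tau} and working in $X^{s_1,b_1}$, the linear term gives $C_1\|\uu(0)-\Pi\uu(0)\|_{H^{s_1}\x H^{s_1-1}}$. For the nonlinear difference we write
$$
\NN(u^g)-\Pi\NN(\Pi u^g_\tau)=(1-\Pi)\NN(u^g)+\Pi\bigl(\NN(u^g)-\NN(\Pi u^g)\bigr)+\Pi\bigl(\NN(\Pi u^g)-\NN(\Pi u^g_\tau)\bigr).
$$

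The last term expands trilinearly in $\Pi(u^g-u^g_\tau)$. The trilinear estimate at the level $(s_1,b_1)$, whose parameters also satisfy its hypotheses since $s_1-3/2>b_1-1/2>0$, bounds it by $T_1^{\alpha(s_1,b_1)}(1+M)^2\|u^g-u^g_\tau\|_{X^{s_1,b_1}}$. Here we use $X^{s,b}\subset X^{s_1,b_1}$ for the other factors, and this term is absorbed into the left-hand side for $T_1$ small.

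The middle term similarly gives $C(1+M)^2\|(1-\Pi)u^g\|_{X^{s_1,b_1}}$.

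The first term, $(1-\Pi)\NN(u^g)$, is the main obstacle, since it must be bounded in $X^{s_1-1,b_1-1}$ by a positive power of $\tau$. I would estimate it crudely as $\tau^{(s-s_1)/2}\|\NN(u^g)\|_{X^{s-1,b_1-1}}$. This loses $b-b_1$ in modulation, but $b_1<b$ is harmless: the estimate only needs the weight $\lan|\sg|-|\xi|\ran^{b_1-1}\leq\lan|\sg|-|\xi|\ran^{b-1}$, so the trilinear bound at level $(s,b)$ suffices. Since the spatial frequency exceeds $\sim\tau^{-1/2}$ on the support of $1-\Pi$, the gain $\lan\xi\ran^{s_1-s}\lesssim\tau^{(s-s_1)/2}$ is legitimate.

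Finally, $\|(1-\Pi)u^g\|_{X^{s_1,b_1}}\lesssim\tau^{(s-s_1)/2}\|u^g\|_{X^{s,b}}$ and the data error is likewise $O(\tau^{(s-s_1)/2})$, which gives the stated bound with $C_2$ depending only on the exponents.
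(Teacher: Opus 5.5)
Your proposal is correct and follows the paper's skeleton: Picard iteration in the ball of radius $2C_0C_1M$ via the continuous trilinear estimate and its $T_1^{\alpha}$ gain, then a difference estimate at level $(s_1,b_1)$ with absorption. Where it departs from the paper is the treatment of the pure truncation error.

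For $(1-\Pi)\NN(u^g)$ you use the crude gain $\lan\xi\ran^{s_1-s}\lesssim\tau^{(s-s_1)/2}$ on the support of $1-\Pi$, together with the trilinear bound at level $(s,b)$. The paper instead notes that an output at frequency $\gtrsim\tau^{-1/2}$ forces one of the three inputs to frequency $\gtrsim\tau^{-1/2}$; this is its ``flexibility in the interpretation of $\Pi$''. Hence the whole difference $\T(u^g,u^g,u^g)-\Pi\T(\Pi u^g,\Pi u^g,\Pi u^g)$ is bounded by $T_1^{\alpha}(1+M)^2\|(1-\Pi)u^g\|_{X^{s_1,b_1}}$.

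Your route is simpler and yields the same rate. However, it leaves an extra term of size $\tau^{(s-s_1)/2}(1+M)^3$ that is not controlled by $\|(1-\Pi)u^g\|_{X^{s_1,b_1}}$. So you obtain the final bound $\lesssim_T\tau^{(s-s_1)/2}$, but not literally the first displayed inequality, contrary to your last sentence. The paper's form expresses the error purely through the data and the tail of $u^g$. That also makes the $s=s_1$ variant used in Section \ref{section:main_theorem} immediate, although your extra term tends to zero there too, by dominated convergence.

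A smaller difference is in the absorption step. Working with global norms forces you to re-choose the extension of $\uu$ as the global fixed point of the cut-off untruncated Duhamel map, built by the same iteration and identified with $\uu$ on $[0,T_1]$ by uniqueness. The paper instead keeps its fixed $\uu^g$ with $\|\uu^g\|\leq 2M$. It writes the nonlinear difference through an arbitrary extension $\mathbf{w}^g$ of $\uu-\uu_\tau$ and absorbs after taking the infimum over such extensions.
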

This lemma also holds for $s=s_1$.
\begin{proof}
	We obtain existence of a solution to \eqref{eqn:u^g_tau} for $T_1\lesssim \bigl(M(1+M)\bigr)^{-1/\alpha(s,b)}$ sufficiently small by Picard iteration in the space
	$$
	B_M:=\{\uu\in \XX^{s,b}(\R):\|\uu\|_{\XX^{s,b}(\R)}\leq 2C_0C_1M\}
	$$
	using the continuous trilinear estimate. It is clear that $\uu^g_\tau$ solves \eqref{eqn:u_tau_duhamel} on $[0,T_1]$; uniqueness in $\XX^{s,b}([0,T_1])$ follows by standard arguments.
	
	We now turn to the question of convergence.	For $T_1$ as above let $\uu_\tau\in \XX^{s,b}([0,T_1])$ be the unique solution to \eqref{eqn:u_tau_duhamel} and $\uu^g_\tau$ be the global extension.	
	Let $\mathbf{w}^g$ be an arbitrary global extension of $\uu-\uu_\tau$ in $\XX^{s,b}(\R)$. Then 
	\begin{align*}
		\eta(t)\Bigl(&\LL_{t}(\uu(0)-\uu_\tau(0))\\
		&+\int_0^t\LL_{t-s}\eta_{T_1}(s)\chi_{[0,\infty)}(s)[\T(u^g,u^g,u^g)-\Pi\T(\Pi u^g,\Pi u^g,\Pi u^g)]\,ds\\
		&+\int_{0}^t\LL_{t-s}\,\eta_{T_1}(s)\chi_{[0,\infty)}(s)\Pi[\mathcal{T}(\Pi w^g,\Pi u^g,\Pi u^g)+\mathcal{T}( \Pi u^g_\tau,\Pi w^g,\Pi u^g)+\mathcal{T}(\Pi u^g_\tau,\Pi u^g_\tau,\Pi w^g)]\,ds\Bigr)\\
	\end{align*}
	is also a global extension of $\uu-\uu_\tau$ from $[0,T_1]$.
	For $s_1$ and $b_1$ as in the theorem statement, we apply the linear estimates of Lemma \ref{lem:bourgain_prop} followed by the sharp time cut-off estimate in Lemma \ref{lem:ctnty-time-cut-off} to obtain 
	\begin{align*}
		&\|\mathbf{u}-\mathbf{u}_\tau\|_{\XX^{s_1,b_1}([0,T_1])}\\
		&\leq C_1\|\uu(0)-\uu_\tau(0)\|_{H^{s_1}\times H^{s_1-1}}+C\|\eta_{T_1}(s)[\T(u^g,u^g,u^g)-\Pi\T(\Pi u^g,\Pi u^g,\Pi u^g)]\|_{X^{s_1-1,b_1-1}(\R)}\\
		&\quad+C\|\eta_{T_1}(s)\bigl(\mathcal{T}( \Pi w^g, \Pi u^g, \Pi u^g)+\mathcal{T}(\Pi u^g_\tau, \Pi w^g, \Pi u^g)+\mathcal{T}(\Pi  u^g_\tau, \Pi u^g_\tau, \Pi w^g)\bigr)\|_{X^{s_1-1,b_1-1}(\R)}.
	\end{align*}
	Before applying the trilinear estimates, observe that the difference of frequency cut-offs in the second term imposes that one of the three factors $u^g$ must also be at frequency $\gtrsim \tau^{-1/2}$. Allowing some flexibility in the interpretation of $\Pi$, this leads to
	\begin{align*}
		&\|\mathbf{u}-\mathbf{u}_\tau\|_{\XX^{s_1,b_1}([0,T_1])}\\
		&\leq C_1\|\uu(0)-\uu_\tau(0)\|_{H^{s_1}\times H^{s_1-1}}+CT_1^\al\|(1-\Pi)u^g\|_{X^{s_1,b_1}(\R)}(1+\|u^g\|_{X^{s_1,b_1}(\R)})^2\\
		&\quad+CT_1^\al \|w^g\|_{X^{s_1,b_1}(\R)}(1+\|u^g\|_{X^{s_1,b_1}(\R)}+\|u^g_\tau\|_{X^{s_1,b_1}(\R)})^2\\
		&\leq C_1\|\uu(0)-\uu_\tau(0)\|_{H^{s_1}\times H^{s_1-1}}+C\|(1-\Pi)u^g\|_{X^{s_1,b_1}(\R)}(1+2M)^2\\
		&\quad+CT_1^\al\|w^g\|_{X^{s_1,b_1}(\R)}(1+4C_0C_1M)^2
	\end{align*}
	for some constant $C>0$ which may change line-to-line.
	Taking the infimum over all possible extensions $w^g$ and $T_1(T)$ slightly smaller if necessary, we obtain
	$$
	\|\mathbf{u}-\mathbf{u}_\tau\|_{\XX^{s_1,b_1}([0,T_1])}\leq 2C_1\|\uu(0)-\uu_\tau(0)\|_{H^{s_1}\times H^{s_1-1}}+C_2\|(1-\Pi)u^g\|_{X^{s_1,b_1}(\R)}(1+M)^2
	$$
	and the result follows.
\end{proof}
Henceforth we fix $T_1(T)$ and $\uu_\tau\in X^{s,b}([0,T_1])$ as above. In particular, $\|\uu_\tau\|_{X^{s,b}([0,T_1])}\leq 2C_0C_1M$.
Having suitably bounded $\|\uu-\uu_\tau\|_{X^{s_1,b_1}}$, which controls $\|\uu(n\tau)-\uu_\tau(n\tau)\|_{X^{s_1,b_1}_\tau}$ by Lemma \ref{cts_dis_lem}, it remains to estimate $\|\uu_n-\uu_\tau(n\tau)\|_{X^{s_1,b_1}_\tau}$. We first confirm that the iterates $\uu_n$ indeed belong to the Bourgain space. Write
\begin{equation}\label{eqn:true_iterates}
	\uu_n=\LL_{n\tau}\Pi\uu_0+\tau\sum_{k=0}^{n-1} \LL_{(n-k)\tau}\chi_{[2\tau,\infty)}(t_k)\NN_\tau(u_k)
\end{equation}
for $\NN_\tau$ as in \eqref{eqn:N_tau}.
Note the summation convention \eqref{eqn:summation_convention} is irrelevant in the presence of the time cut-off.
\begin{theorem}\label{thm:un_in_Xsbt} 
	For $0<T_1(T)<\min\{T,1\}$ sufficiently small the equation \eqref{eqn:true_iterates} admits a unique solution $\uu_n\in X^{s,b}_\tau([0,T_1])$. This solution coincides with the constructive iterates and has a global extension $\uu^g_n$ solving
	\begin{equation}
		\uu^g_n=\eta(t_n)\Bigl(\LL_{n\tau}\Pi\uu_0+\tau\sum_{k=0}^{n-1} \LL_{(n-k)\tau}\chi_{[2\tau,\infty)}(t_k)\eta_{T_1}(t_k)\NN_\tau(u^g_k)\Bigr)
	\end{equation}
	with $\|\uu^g_n\|_{\xxsbt(\R)}\leq 2C_0C_1M$.
\end{theorem}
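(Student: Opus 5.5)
We construct $\uu^g_n$ as a fixed point of the map
$$
\Phi(\vv)_n:=\eta(t_n)\Bigl(\LL_{n\tau}\Pi\uu_0+\tau\sum_{k=0}^{n-1}\LL_{(n-k)\tau}\chi_{[2\tau,\infty)}(t_k)\eta_{T_1}(t_k)\NN_\tau(v_k)\Bigr)
$$
on the ball $B^\tau_M:=\{\vv\in\xxsbt(\R):\|\vv\|_{\xxsbt}\leq 2C_0C_1M\}$ of sequences with spatial frequency support in $\{|\xi|\leq\pi/(8\tau^{1/2})\}$. This is the discrete analogue of the proof of Theorem \ref{thm:truncation_theorem}. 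First, the support condition is preserved by $\Phi$, since $\NN_\tau$ carries an outer $\Pi$ and $\LL$ is a Fourier multiplier. The hypotheses of Lemma \ref{lem:bourgain_prop}, Lemma \ref{lem4} and Theorem \ref{thm:trilinear_thm} are then all available. The linear part is bounded by \eqref{item2}: $\|\eta(t_n)\LL_{n\tau}\Pi\uu_0\|_{\xxsbt}\leq C_1\|\uu_0\|_{H^s\x H^{s-1}}$, and this is at most $C_1C_0M$ by \eqref{item1} applied to the exact solution at $t=0$. (The constant convention is as in Theorem \ref{thm:truncation_theorem}, ignoring $p$.)

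For the Duhamel part, note that the sum runs to $n-1$ rather than $n$, and that $\LL_{(n-k)\tau}=\LL_{(n-k-1)\tau}\LL_\tau$. We therefore rewrite it in the form of \eqref{item3} with $F_m$ replaced by a shifted sequence, with $\LL_\tau$ applied. Lemma \ref{linear_lemma} shows $\Pi\LL_\tau$ is bounded on $\xxsbt$, and index shifts are harmless since they act by a unimodular multiplier $e^{-i\tau\sg}$ in $\sg$. By \eqref{item3}, this part is controlled by
$$
\|\chi_{[2\tau,\infty)}(t_k)\eta_{T_1}(t_k)\NN_\tau(v_k)\|_{\xsbtm}.
$$
The sharp cut-off $\chi_{[2\tau,\infty)}$ is a time-shift of $\chi_{[0,\infty)}$, so Lemma \ref{lem:ctnty-time-cut-off}\eqref{item:3} removes it. This is permitted because $s-1=1/2+s'$ and $b-1=-1/2+b'$ give $(s-1)+(b-1)=s'+b'$. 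The restriction $s+b<5/2$ fixed at the start of Section \ref{sec:error_representation} is exactly what makes $(s-1)+(b-1)<1/2$.

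Next, $\NN_\tau(v)=\Pi\,\T(\Pi v,\Pi v,\Pi v)$ up to the factor $2$ convention, since $\Box_\tau(\Pi v\cdot\Pi v)-2\Pi v\cdot\Box_\tau\Pi v=\Box_\tau(g h)-g\Box_\tau h-h\Box_\tau g$ with $g=h=\Pi v$. Theorem \ref{thm:trilinear_thm} then yields the bound $CT_1^{\al}(1+\|v\|)\|v\|^2\leq CT_1^\al(1+M)M^2$, where we use the boundedness of $\Pi$ on $\xsbt$. Choosing $T_1\lesssim(M(1+M))^{-1/\al}$ makes this at most $C_0C_1M$, so $\Phi$ maps $B^\tau_M$ to itself.

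Contraction follows the same way from trilinearity of $\T$. We write $\T(v,v,v)-\T(w,w,w)$ as a sum of three terms, each having one slot equal to $v-w\in\xsbt$ with no constant $p$, and use the remark after Theorem \ref{thm:trilinear_thm} for such slots. All constants are uniform in $\tau$, so $T_1$ depends only on $T$ through $M$.

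It remains to identify the fixed point with the iterates and to prove uniqueness in $\xxsbt([0,T_1])$. Since $\eta=1$ on $[0,1]\supset[0,T_1]$ and $\eta_{T_1}=1$ on $[0,T_1]$, the fixed point satisfies \eqref{eqn:true_iterates} for $t_n\in[0,T_1]$. That relation is explicit: $\uu_n$ is determined by $\uu_0,\dots,\uu_{n-1}$. Induction on $n$ therefore shows that any solution coincides with the constructive iterates from \eqref{SS}; writing \eqref{eqn:true_iterates} is exactly the unrolled recursion. This gives uniqueness directly, which is simpler than the continuous argument.

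The main obstacle I expect is the bookkeeping in the Duhamel step. The scheme's sum is explicit (up to $n-1$, with an extra $\LL_\tau$), whereas \eqref{item3} is stated for sums up to $n$. In addition, the sharp cut-off must act on $\xsbtm$ with exactly the admissible exponents. Verifying $s-1+b-1<1/2$ and $s-1+b-1>0$ for the chosen $b$ is the delicate point.
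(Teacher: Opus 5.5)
Your proposal is correct and follows the paper's (omitted) argument. It runs Picard iteration in the ball of radius $2C_0C_1M$ exactly as in Theorem \ref{thm:truncation_theorem}, now with the discrete linear estimates of Lemma \ref{lem:bourgain_prop}, the sharp cut-off Lemma \ref{lem:ctnty-time-cut-off} and the discrete trilinear estimate, and it notes, as the paper does, that \eqref{eqn:true_iterates} is explicit on $[0,T_1]$, so coincidence with the iterates and uniqueness are immediate. Your handling of the upper limit $n-1$ (index shift plus $\LL_\tau$) and your check that $0<(s-1)+(b-1)<1/2$ fill in exactly what the paper leaves implicit; the only harmless slips are that no factor-$2$ adjustment is needed, since $\NN_\tau(v)=\Pi\,\T(\Pi v,\Pi v,\Pi v)$ exactly, and that Lemma \ref{linear_lemma} is stated for $|t|<\tau$ but its proof trivially covers $t=\tau$.
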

While it is clearest to state in $\xxsbt$, we will only use the bound $\|\uu^g_n\|_{\XX^{s_1,b_1}_\tau(\R)}\leq 2C_0C_1M$. This is important to notice for the iteration over subintervals of $[0,T]$.
\begin{proof}
	We omit the proof which is similar to that of Theorem \ref{thm:truncation_theorem}. Note it is trivial that the iterates coincide with the solution constructed via Picard iteration on $[0,T_1]$ since the formula is constructive in that region.
\end{proof}


Now observe that for $0\leq n\tau\leq T_1$ we can write
\begin{equation}\label{duhamel}
	\uu_\tau(t_n)=\LL_{n\tau}\Pi\uu_0+\sum_{k=0}^{n-1}\LL_{(n-k)\tau}\int_0^\tau\LL_{-s}\NN_\Pi(u_\tau(t_k+s))ds,
\end{equation}
so
\begin{align*}
	\uu_n-\uu_\tau(t_n)&=\tau\sum_{k=0}^{n-1}\LL_{(n-k)\tau}\chi_{[2\tau,\infty)}(t_k)\bigl(\NN_\tau(u_k)-\NN_\tau(u_\tau(t_k))\bigr)\\
	&\quad+\sum_{k=0}^{n-1}\LL_{(n-k)\tau}\Bigl(\tau\chi_{[2\tau,\infty)}(t_k)\NN_\tau(u_\tau(t_k))-\int_0^\tau \LL_{-s}\NN_\Pi(u_\tau(t_k+s))\,ds\Bigr).
\end{align*}
Consider the equation
\begin{align}
	\mathbf{w}_n&=\eta(t_n)\tau\Pi\sum_{k=0}^{n-1}\LL_{(n-k)\tau}\eta_{T_1}(t_k)\chi_{[2\tau,\infty)}(t_k)\bigl(\T(u^g_k,u^g_k,w_k)+\T(u^g_k,w_k,u^g_\tau(t_k))+\T(w_k,u^g_\tau(t_k),u^g_\tau(t_k)\Bigr)\nonumber\\
	&\quad+\eta(t_n)\sum_{k=0}^{n-1}\LL_{(n-k)\tau}\eta_{T_1}(t_k)\chi_{[0,\infty)}(t_k)\EE_{\text{loc}}(t_k,\tau,u^g_\tau)\label{eqn:w_n}
\end{align}
where the (summand of) the local error is defined on a function $v:\R\to\R^3$ by
\begin{align}
	\EE_{\text{loc}}(t_k,\tau,v):&=-\tau \chi_{[0,2\tau)}(t_k)\NN_\tau(v(t_k))\label{eqn:local_error}\\
	&\quad+\int_0^\tau \NN_\tau(v(t_k))-\NN_\tau( v(t_k+s))\,ds\nonumber\\
	&\quad+\int_0^\tau \NN_\tau(v(t_k+s))-\NN_\Pi(v(t_k+s))\,ds\nonumber\\
	&\quad+\int_0^\tau(I-\LL_{-s}) \NN_\Pi(v(t_k+s))\,ds\nonumber\\
	&=:\mathcal{E}_1(t_k,\tau,v)+\mathcal{E}_2(t_k,\tau,v)+\mathcal{E}_3(t_k,\tau,v)+\mathcal{E}_4(t_k,\tau,v).\nonumber
\end{align}
It is clear that $\mathbf{w}_n$ defines a global extension of $\uu_n-\uu_\tau(t_n)$ from $[0,T_1]$.

The next section is dedicated to bounding the local error. Note that the local error depends only on $u^g_\tau$ so should be viewed as a forcing term in the equation for $\mathbf{w}_n$.

\section{Local error analysis}\label{sec:local-error}
The key difficulty in estimating the local error is controlling the error caused by the discrete approximation of the wave operator in the nonlinearity. Our first result for this section resolves this issue by improving Lemma \ref{key_lemma} by nonlinear estimates. Recall $s,\,s_1,\,b$ and $b_1$ are fixed as in Section \ref{sec:error_representation}.

\begin{lemma}\label{lem:nonlinear_lemma}
	For $u^g_\tau$ as in \eqref{eqn:u^g_tau},
	$$
	\tau\|\dd_t^3 u^g_\tau\|_{X^{s_1-1,b_1-1}}+\tau^{s-s_1+b}\|\dd_tu^g_\tau\|_{X^{s,b}}\lesssim \tau^{(s-s_1)/2}\|u^g_\tau\|_{X^{s,b}}(1+\|u^g_\tau\|_{X^{s,b}})^2.
	$$
\end{lemma}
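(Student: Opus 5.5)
The plan is to bound the two terms separately, using the Duhamel structure \eqref{eqn:u^g_tau} of $u^g_\tau$ together with the frequency truncation $\Pi=\Pi_{\pi/(8\tau^{1/2})}$. The basic point is that on frequencies $|\xi|\lesssim\tau^{-1/2}$ each spatial derivative costs only $\tau^{-1/2}$. Time derivatives can be exchanged for spatial ones via the equation, at least up to the nonlinear forcing.

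\textbf{Second term.} Here I will not use the equation. Instead I will argue directly in Fourier space: $\|\dd_tu^g_\tau\|_{X^{s,b}}$ is comparable to the $X^{s+1,b}$ norm on the region $|\sg|\lesssim|\xi|$. Off the cone the extra factor $|\sg|$ is absorbed by $\lan|\sg|-|\xi|\ran$. This trades $b$ for one power of modulation, so I will compare with $X^{s,b+1}$ there, or more carefully use the equation for that part.

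A cleaner route is to write $\dd_t u^g_\tau$ as the second component of $\uu^g_\tau$ (up to cut-off derivatives). It lies in $X^{s-1,b}$ with norm $\lesssim\|\uu^g_\tau\|_{\XX^{s,b}}$. Applying Bernstein on $|\xi|\lesssim\tau^{-1/2}$ then costs one derivative, giving $\|\dd_tu^g_\tau\|_{X^{s,b}}\lesssim \tau^{-1/2}\|\uu^g_\tau\|$. Hence
\[
\tau^{s-s_1+b}\|\dd_tu^g_\tau\|_{X^{s,b}}\lesssim\tau^{s-s_1+b-1/2}M\le \tau^{(s-s_1)/2}M,
\]
since $b>1/2$. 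The modulation weight $\lan|\sg|-|\xi|\ran^b$ applied to $\dd_t u^g_\tau$ has to be checked against the equation. I will handle it by the same decomposition as for the first term.

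\textbf{First term.} From \eqref{eqn:u^g_tau}, on the support of $\eta\equiv1$ we have $\dd_t^2u^g_\tau=\D u^g_\tau+\eta_{T_1}\NN_\Pi(u^g_\tau)$ plus terms where derivatives fall on $\eta$; those are harmless by Lemma \ref{lem:bourgain_prop}\ref{item:II}. Hence
\[
\dd_t^3u^g_\tau=\D\dd_tu^g_\tau+\dd_t\bigl(\eta_{T_1}\NN_\Pi(u^g_\tau)\bigr)+\text{(cut-off terms)}.
\]
For the linear part, $\D$ costs $\tau^{-1}$ on frequencies $\lesssim\tau^{-1/2}$. Combined with $\dd_tu^g_\tau\in X^{s-1,b}$ and the drop from $(s-1,b)$ to $(s_1-1,b_1-1)$, which gains $\tau^{-(s-s_1)/2}\cdot$(frequency), this gives
\[
\tau\|\D\dd_t u^g_\tau\|_{X^{s_1-1,b_1-1}}\lesssim \tau\cdot\tau^{-1+(s-s_1)/2}M.
\]
To make this precise I will interpolate the frequency gain: $\lan\xi\ran^{s_1-s}\le$ (nothing at low frequency), so I will split $|\xi|\lessgtr\tau^{-1/2}$, noting that only the truncated region exists.

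For the nonlinear part, $\dd_t$ of the nonlinearity costs one time derivative. On the cone it acts like $|\xi|\lesssim\tau^{-1/2}$, and off the cone it is absorbed by the modulation drop $b_1-1<b-1$ together with $b-b_1>s-s_1$. The remaining factor $\|\eta_{T_1}\NN_\Pi(u^g_\tau)\|_{X^{s-1,b-1}}$ is bounded via the null form \eqref{WM2} and Theorem \ref{thm:trilinear_thm} (continuous version) by $\|u\|(1+\|u\|)^2$.

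\textbf{Main obstacle.} The hardest step is the high-modulation part of $\dd_t$ acting on the nonlinearity, where $|\sg|\gg|\xi|$ is not bounded by $\tau^{-1/2}$. There I plan to use that the output modulation is controlled via Lemmas \ref{claim2}--\ref{claim1}. These force a high-modulation input, whose time derivative can again be traded through the equation, iterating once. Alternatively, I will accept the loss $\lan\sg\ran^{b-b_1}$, using $b-b_1>s-s_1$ and $|\sg|\lesssim$ modulation, to produce the factor $\tau^{(s-s_1)/2}$.
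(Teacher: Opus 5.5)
There is a genuine gap, located in the region $|\sg|>2|\xi|$ away from the cone. The projection $\Pi$ truncates only the spatial frequency. The continuous function $u^g_\tau$ has unbounded temporal frequency, and off the cone the weight $\lan|\sg|+|\xi|\ran$ is comparable to $\lan\sg\ran$.

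\begin{itemize}
\item Your ``cleaner route'' for the second term fails. Passing from $X^{s-1,b}$, where the second component of $\uu^g_\tau$ lives, to $X^{s,b}$ costs a factor $\lan\sg\ran$ there, not $\tau^{-1/2}$, and spatial Bernstein cannot help. You also have no control of $u^g_\tau$ in $X^{s,b+1}$.
\item For the first term, the linear piece $\tau\|\D\dd_tu^g_\tau\|_{X^{s_1-1,b_1-1}}\lesssim\tau^{(s-s_1)/2}\|u^g_\tau\|_{X^{s,b}}$ is fine. However, the claim that $\dd_t$ on the nonlinearity is absorbed off the cone by the modulation drop is false. The ratio of $|\sg|$ times the $X^{s_1-1,b_1-1}$ weight to the $X^{s-1,b-1}$ weight is $\sim\lan\sg\ran^{1-(s-s_1)-(b-b_1)}$. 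With the parameter choices of Section \ref{sec:error_representation}, both $s-s_1$ and $b-b_1$ are below $1/4$, so this grows faster than $\lan\sg\ran^{1/2}$.
\item Your fallback of accepting a loss $\lan\sg\ran^{b-b_1}$ misstates the loss, and no power of $\tau$ can be extracted from $\sg$ in any case.
\item Trading the time derivative of a high-modulation input through the equation ``iterating once'' does not close either. Substituting $\dd_t^2u=\D u+\NN_\Pi(u)$ produces new products whose high-modulation part again demands extra time regularity of some factor. Without an absorption argument this regresses indefinitely.
\end{itemize}

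The missing idea is the a priori bound $\|\dd_tu^g_\tau\|_{X^{s,b}}\lesssim\tau^{-1/2}\|u^g_\tau\|_{X^{s,b}}(1+\|u^g_\tau\|_{X^{s,b}})^2$. The paper proves it by differentiating the Duhamel formula \eqref{eqn:u^g_tau} in $t$. Since $\dd_t\T(u,u,u)=\T(\dd_tu,u,u)+\T(u,\dd_tu,u)+\T(u,u,\dd_tu)$, the function $\dd_tu^g_\tau$ solves a linear Duhamel equation with data $(\dd_tu(0),\dd_t^2u(0))$. Its forcing is trilinear with one slot occupied by $\dd_tu$, plus harmless $\eta'$, $\eta_{T_1}'$ terms.

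Theorem \ref{thm:trilinear_thm} bounds that forcing by $T_1^{\al}\|\dd_tu\|_{X^{s,b}}\|u\|_{X^{s,b}}(1+\|u\|_{X^{s,b}})$, which is absorbed for $T_1$ small. What remains is $\|\dd_tu(0)\|_{H^s}$ and $\|\dd_t^2u(0)\|_{H^{s-1}}=\|\D u(0)+\NN_\Pi(u(0))\|_{H^{s-1}}$. These cost only $\tau^{-1/2}$, by the truncation $\Pi$ together with fractional Leibniz and Sobolev embedding, using $s>3/2$.

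With this bound, the second term is immediate since $\tau^{s-s_1+b-1/2}\le\tau^{(s-s_1)/2}$. The $\dd_t^3$ term then needs no equation at all:
\begin{itemize}
\item On $|\sg|\le2|\xi|$ one has $\sg^2\lan\xi\ran^{s_1-1}\lesssim\tau^{-1/2+(s-s_1)/2}\lan\xi\ran^{s}$, giving $\tau^{1/2+(s-s_1)/2}\|\dd_tu\|_{X^{s,b}}$.
\item On $|\sg|>2|\xi|$ the weights give $\tau\|\dd_tu\|_{X^{s_1,b_1}}$.
\end{itemize}
Both are then controlled by the bound on $\|\dd_tu\|_{X^{s,b}}$.
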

\begin{proof}
	We write $u=u^g_\tau$ to reduce notation.	
	To handle the $\dd_t^3$ term, we divide the integral into the regions $|\sg|\leq2|\xi|$ and $|\sg|>2|\xi|$.
	\begin{equation}\label{eqn:region1}
	\tau\|\chi_{|\dd_t|\leq 2|\dd_x|}\dd_t^3u\|_{X^{s_1-1,b_1-1}}\lesssim \tau^{1/2}\tau^{(s-s_1)/2}\|\dd_tu\|_{X^{s,b_1-1}}\lesssim\tau^{1/2}\tau^{(s-s_1)/2}\|\dd_tu\|_{X^{s,b}}.
	\end{equation}
	On the other hand,
	\begin{equation}\label{eqn:region2}
	\tau\|\chi_{|\dd_t|>2|\dd_x|}\dd_t^3u\|_{X^{s_1-1,b_1-1}}\lesssim \tau\|\dd_tu\|_{X^{s_1,b_1}}\lesssim\tau\|\dd_tu\|_{X^{s,b}}.
	\end{equation}
	It therefore suffices to show that
	\begin{equation}\label{eqn:dt_ug}
		\|\dd_tu\|_{X^{s,b}}\lesssim \tau^{-1/2}\|u\|_{X^{s,b}}(1+\|u\|_{X^{s,b}})^2.
	\end{equation}
	
	Observe that
	\begin{align*}
		\dd_tu(t)&=\eta'(t)\Bigl(\LL_t\Pi\uu_0+\int_0^t\LL_{t-s}\eta_{T_1}(s)\NN_{\Pi}(u(s))\,ds\Bigr)\\
		&\quad+\eta(t)\Bigl(\LL_t(\dd_tu(0),\dd_t^2u(0))+\int_0^t\LL_{t-s}\bigl(\eta_{T_1}(s)\widetilde{\NN}_\Pi(u(s),\dd_tu(s))+\eta_{T_1}'(s)\NN_\Pi(u(s)\bigr))\,ds\Bigr)
	\end{align*}
	where $\widetilde{\NN}_\Pi(u,\dd_tu)=\Pi\widetilde{\NN}(\Pi u,\Pi \dd_tu)$ for
	\begin{align*}
		\widetilde{\NN}(u,\dd_tu)&=\T(\dd_tu,u,u)+\T(u,\dd_tu,u)+\T(u,u,\dd_tu).
	\end{align*}
	Therefore the linear and trilinear estimates yield
	\begin{align*}
	\|\dd_tu\|_{X^{s,b}}&\lesssim \|\uu_0\|_{H^s\x H^{s-1}}+\|\dd_tu(0)\|_{H^s}+\|\dd_t^2u(0)\|_{H^{s-1}}\\
	&\quad+T_1^{\al-1}\|u\|_{X^{s,b}}^2(1+\|u\|_{X^{s,b}})+T_1^{\al}\|\dd_tu\|_{X^{s,b}}\|u\|_{X^{s,b}}(1+\|u\|_{X^{s,b}}),
	\end{align*}
	where we used that for $u\in p+X^{s,b}$, $\dd_tu\in X^{s,b}$. Thus taking $T_1(T)$ sufficiently small, we see that it suffices to prove that
	\begin{equation}\label{eqn:NL(0)}
		\|\dd_t u(0)\|_{H^s}+\|\dd_t^2u(0)\|_{H^{s-1}}\lesssim \tau^{-1/2}\|u\|_{X^{s,b}}(1+\|u\|_{X^{s,b}})^2.
	\end{equation}
	The bound on the first derivative is immediate due to the frequency restriction and the embedding $\|\dd_t u(0)\|_{H^{s-1}}\lesssim\|\uu\|_{\XX^{s,b}}$.
	
	For the term involving the second derivative, we write
	$$
	\dd_t^2u(0)=\Delta u(0)+\NN_\Pi(u(0)).
	$$
	It is clear that $\|\Delta u(0)\|_{H^{s-1}}\lesssim \tau^{-1/2}\|u(0)\|_{H^s}$, so we only need to study the second term,
	$$
	\|u(0)\,\dd^\al u(0)\cdot\dd_\al u(0)\|_{H^{s-1}}
	$$
	(we ignore the frequency projections $\Pi$ which merely clutter the formulae).
	Fix some $M>0$ such that $s>3/2+3/M$. By the fractional Leibniz rule and Sobolev embeddings we have
	\begin{align*}
		\|u(0)\,\dd^\al u(0)\cdot\dd_\al u(0)\|_{H^{s-1}}&\lesssim \|\lan \na\ran^{s-1}u(0)\|_{L^6}\|\dd^\al u(0)\|_{L^6}\|\dd_\al u(0)\|_{L^6}+\|u(0)\|_M\|\lan \na\ran^{s-1}(\dd^\al u(0)\cdot\dd_\al u(0))\|_{L^\f{2M}{M-2}}\\
		&\lesssim \|\lan \na\ran^{s-1}u(0)\|_{H^{1}}\|\dd^\al u(0)\|_{H^1}^2+\|u(0)\|_{H^{3/2-3/M}}\|\dd_\al u(0)\|_{\f{3M}{M-3}}\|\lan \na\ran^{s-1}\dd^\al u(0)\|_{L^6}\\
		&\lesssim \|u(0)\|_{H^s}(\tau^{-\half(2-s)}\|\dd^\al u(0)\|_{H^{s-1}})^2+\|u(0)\|_{H^s}\|\dd_\al u(0)\|_{H^{1/2+3/M}}\|\dd^\al u(0)\|_{H^s}\\
		&\lesssim (\tau^{s-2}+\tau^{-1/2})\|u(0)\|_{H^s}\|\dd^\al u(0)\|_{H^{s-1}}^2.
	\end{align*}
	Since $s>3/2$, this proves \eqref{eqn:NL(0)}.
\end{proof}

	Combining the previous lemma with Lemma \ref{key_lemma} we see that
	\begin{equation}\label{eqn:key_diff_bound}
	\|(\Box-\Box_\tau)u^g_\tau(t_n)\|_{X^{s_1-1,b-1}_\tau}\lesssim_{s,s_1,b,b_1} \tau^{(s-s_1)/2}\|u^g_\tau\|_{X^{s,b}}(1+\|u^g_\tau\|_{X^{s,b}})^2.
\end{equation}

We will also need the above result for $|u^g_\tau(t_n)|^2$. Again set $u=u^g_\tau$. By Lemma \ref{key_lemma} we still have
\begin{align*}
\|(\Box-\Box_\tau)(|u(t_n)|^2)\|_{X^{s_1-1,b_1-1}_\tau}&\leq \tau\|\dd_t^3(|u|^2)\|_{X^{s_1-1,b_1-1}}+\tau^{s-s_1+b}\|\dd_t(|u|^2)\|_{X^{s,b}}.
\end{align*}
The same calculations \eqref{eqn:region1}-\eqref{eqn:region2} show that
$$
\tau\|\dd_t^3(|u|^2)\|_{X^{s_1-1,b_1-1}}\lesssim \tau^{1/2}\tau^{(s-s_1)/2}\|\dd_t(|u|^2)\|_{X^{s,b}}.
$$
so it remains to estimate $\|\dd_t(|u|^2)\|_{X^{s,b}}$. 
For this we use the bilinear estimate \eqref{eqn:algebra-property}, which in conjunction with \eqref{eqn:dt_ug} shows that
$$
\|\dd_t(|u|^2)\|_{X^{s,b}}\lesssim\|u\|_{X^{s,b}}\|\dd_t u\|_{X^{s,b}}\lesssim \tau^{-1/2}\|u\|_{X^{s,b}}^2(1+\|u\|_{X^{s,b}})^2.
$$
We have thus shown that
	\begin{equation}\label{eqn:key_diff_bound2}
	\|(\Box-\Box_\tau)(|u_n|^2)\|_{X^{s_1-1,b-1}_\tau}\lesssim\tau^{(s-s_1)/2}\|u\|_{X^{s,b}}^2(1+\|u\|_{X^{s,b}})^2.
\end{equation}

We can now bound the local error.
\begin{lemma}\label{lem:local_error_bound}
	There exists a constant $C_{T_1}>0$ such that
	$$
	\tau^{-1}\|\eta_{T_1}(t_n)\EE_{\text{loc}}(t_n,\tau,u^g_\tau)\|_{X^{s_1-1,b_1-1}_\tau}\leq C_{T_1}\tau^{(s-s_1)/2}(1+\|u^g_\tau\|_{X^{s,b}})^5.	
	$$
\end{lemma}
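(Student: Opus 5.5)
We bound the four pieces $\EE_1,\dots,\EE_4$ of \eqref{eqn:local_error} separately in $X^{s_1-1,b_1-1}_\tau$, always with the cut-off $\eta_{T_1}(t_n)$ in front. Throughout we write $u=u^g_\tau$. We use three tools: Lemma \ref{cts_dis_lem} to pass from continuous to discrete norms, and the trilinear estimate Theorem \ref{thm:trilinear_thm} (discrete and continuous versions, with $T=T_1$, the gain $T_1^\alpha$ absorbed into $C_{T_1}$) to control $\NN_\tau(u)$, $\NN_\Pi(u)$ and their variants. Each term is viewed as $\tau$ times an average over $s\in[0,\tau]$ of an expression evaluated on the shifted sequences $u(t_n+s)$. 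After Minkowski in $s$, it then suffices to bound that expression uniformly in $s$ by $\tau^{(s-s_1)/2}(1+\|u\|_{X^{s,b}})^5$. Lemma \ref{cts_dis_lem} is stated with a $\sup_\theta$ exactly for this purpose.

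\textbf{Term $\EE_1$.} This is supported on at most two time steps, $n\in\{0,1\}$. Here I would use Lemma \ref{lem:ctnty-time-cut-off}\eqref{item:2} with $T=2\tau$ to trade regularity for time: $\|\chi_{[0,2\tau)}(t_n)\NN_\tau(u_n)\|_{X^{s_1-1,b_1-1}_\tau}\lesssim\tau^{s-s_1}\|\NN_\tau(u_n)\|_{X^{s-1,b_1-1}_\tau}$. Its hypotheses hold since $b_1-1\in(-1/2,0)$ and $s+b_1-2<1/2$, which follows from $s+b<5/2$. Then $\|\NN_\tau(u_n)\|_{X^{s-1,b-1}_\tau}$ is bounded by the discrete trilinear estimate together with $\|u_n\|_{X^{s,b}_\tau}\lesssim\|u\|_{X^{s,b}}$. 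Since $\tau^{s-s_1}\le\tau^{(s-s_1)/2}$, this term is fine.

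\textbf{Term $\EE_4$.} The operator $I-\LL_{-s}$ with $|s|<\tau$ is handled by Lemma \ref{linear_lemma}, applied at the level $(s-1,b_1-1)\to(s_1-1,b_1-1)$. This gives a factor $\tau^{s-s_1}$ times $\|\NN_\Pi(u(t_n+s))\|_{X^{s-1,b-1}_\tau}$. That quantity is controlled, via Lemma \ref{cts_dis_lem} (after replacing $b$ by $b-1$; some care is needed since that lemma asks $b>1/2$, so instead I would first write $\NN_\Pi$ through $\T$ and estimate at the continuous level before sampling), by the continuous trilinear estimate.

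\textbf{Term $\EE_2$.} $\NN_\tau(u(t_n))-\NN_\tau(u(t_n+s))$ is trilinear, so it splits into sums of $\T$ with one slot containing $u(t_n)-u(t_n+s)$. Writing $u(\cdot+s)-u=\int_0^s\dd_tu$, this gains a factor $\tau$ at the cost of $\|\dd_tu\|_{X^{s,b}}$, which by \eqref{eqn:dt_ug} is $\lesssim\tau^{-1/2}$. Since only $(s-s_1)$ regularity may be lost, I would interpolate: the difference is bounded in $X^{s_1,b_1}$ by $\tau^{\theta}$ with $\theta=(s-s_1)/2$, using the trivial bound $2\|u\|$ and the derivative bound $\tau^{1/2}$. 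One can also simply lose regularity: in frequency, $|e^{is\sigma}-1|\lesssim(\tau|\sigma|)^{\theta}$, with $|\sigma|$ at most $\langle|\sigma|+|\xi|\rangle$. Then the discrete trilinear estimate at level $(s_1,b_1)$ gives $\tau^{(s-s_1)/2}$.

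\textbf{Term $\EE_3$.} This is the main obstacle: replacing $\Box_\tau$ by $\Box$ inside the null form. Writing $\NN_\tau-\NN_\Pi=-\Pi\,\Pi u\bigl[(\Box_\tau-\Box)|\Pi u|^2-2\Pi u\cdot(\Box_\tau-\Box)\Pi u\bigr]$, I would use the product estimate \eqref{eqn:xsbtm-property} at level $(s_1,b_1)$. This requires the commutator pieces to lie in $X^{s_1-1,b_1-1}_\tau$, and those are exactly \eqref{eqn:key_diff_bound} and \eqref{eqn:key_diff_bound2}, each giving $\tau^{(s-s_1)/2}(1+\|u\|)^4$ (applied to shifted sequences, which again uses translation invariance). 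The remaining factors $\Pi u$ go into $X^{s_1,b_1}_\tau$ through Lemma \ref{cts_dis_lem}. Multiplying by $\eta_{T_1}$ is harmless by Lemma \ref{lem:bourgain_prop}\ref{item:II}. Collecting the powers, at most five factors of $(1+\|u\|_{X^{s,b}})$ appear.
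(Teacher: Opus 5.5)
Your proposal is correct. For $\EE_1$, $\EE_3$ and $\EE_4$ it is the paper's argument; the genuine difference is in $\EE_2$.

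The paper proves the Claim $\|\eta_{T_1}(t_n)(u^g_\tau(t_n)-u^g_\tau(t_n+s))\|_{X^{s_1,b_1}_\tau}\lesssim\tau^{s-s_1}\|u^g_\tau\|_{X^{s,b}}(1+\|u^g_\tau\|_{X^{s,b}}^2)$ by going back to the Duhamel formula \eqref{eqn:u^g_tau}. It treats the free part $(1-\LL_s)u^g_\tau(t_n)$ with Lemma \ref{linear_lemma}, bounds the $\eta(t_n)-\eta(t_n+s)$ term as $O(\tau)$, and handles the Duhamel integral over $[0,s]$ by Stein--Weiss interpolation between \eqref{B1} and \eqref{B2}. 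The pieces of \eqref{B-1} are then closed with the bilinear estimates and Lemma \ref{lem4}, which is equivalent to your use of the trilinear estimate at level $(s_1,b_1)$.

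You never use the equation, and your second route is the cleanest. On the continuous side, $|e^{is\sg}-1|\le 2(\tau|\sg|)^{\theta}\le 2\tau^{\theta}\lan|\sg|+|\xi|\ran^{\theta}$ for $0\le\theta\le 1$. Hence $\|u(\cdot+s)-u\|_{X^{s_1,b_1}}\lesssim\tau^{s-s_1}\|u\|_{X^{s,b}}$ for any $u$, precisely because the wave Bourgain weight controls the time frequency. Lemma \ref{cts_dis_lem}, which is legitimate here since $b_1>1/2$, then transfers this bound to the sampled difference. Make sure you do this before sampling: a shift by $s\notin\tau\Z$ is not a multiplier for $\tft$, since the aliased copies in \eqref{eqn:poisson_sum} pick up different phases. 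This gives the same $\tau^{s-s_1}$ as the paper's Claim, with linear rather than cubic dependence on $\|u^g_\tau\|_{X^{s,b}}$ and much less machinery.

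Your first route via \eqref{eqn:dt_ug} is also valid and gives $\tau^{1/2}$ directly in $X^{s,b}$, though it depends on the equation and on the $\tau^{-1/2}$ truncation. The ``interpolation'' you attach to it is unnecessary, since $\tau^{1/2}\le\tau^{(s-s_1)/2}$ already.

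Your caution in $\EE_4$ is justified. The paper passes from $\|\NN_\Pi(u^g_\tau(t_n+s))\|_{X^{s-1,b-1}_\tau}$ to $\|u^g_\tau\|_{X^{s,b}}^3$ without comment. Your remedy of ``estimating at the continuous level'' does not by itself help, because the sampled $\Box$-factors sit at modulation exponent $b-1<1/2$, where Lemma \ref{cts_dis_lem} is unavailable. A clean fix is to write $\NN_\Pi=\NN_\tau-(\NN_\tau-\NN_\Pi)$. The first piece gains $\tau^{s-s_1}$ from Lemma \ref{linear_lemma} and is bounded by \eqref{nl_estimate}. The second piece is exactly the $\EE_3$ integrand, already of size $\tau^{(s-s_1)/2}$, so the boundedness of $I-\LL_{-s}$ (Lemma \ref{linear_lemma} with $s_1=s$) suffices.
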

\begin{proof}
	Note that $s_1-3/2>b_1-1/2>0$. Observe that for any sequence $v_n$, an application of Theorem \ref{thm:bilinear_estimates} with Lemma \ref{lem4} shows that
	\begin{align}
		\|\NN_\tau(v_n)\|_{X^{s-1,b-1}_\tau}\lesssim\|\Pi v_n\|_{X^{s,b}_\tau}\|\Box_\tau (\Pi v_n\cdot\Pi v_n)\|_{X^{s-1,b-1}_\tau}+\|\Pi v_n\|_{X^{s,b}_\tau}^2\|\Box \Pi v_n\|_{X^{s-1,b-1}_\tau} 
		&\lesssim \|v_n\|_{X^{s,b}_\tau}^3.\label{nl_estimate}
	\end{align}
	
	First consider $\EE_1$. By the sharp cut-off estimate of Lemma \ref{lem:ctnty-time-cut-off} followed by \eqref{nl_estimate} and Lemma \ref{cts_dis_lem}, we bound
	\begin{align*}
		\tau^{-1}\|\eta_{T_1}(t_n)\EE_1(t_n,\tau,u^g_\tau)\|_{X^{s_1-1,b_1-1}_\tau}&\lesssim\|\chi_{[0,2\tau)}(t_n)\NN_\tau(u^g_\tau(t_n))\|_{X^{s_1-1,b_1-1}_\tau}\\
		&\lesssim\tau^{s-s_1}\|\NN_\tau(u^g_\tau(t_n))\|_{X^{s-1,b-1}_\tau}\\
		&\lesssim\tau^{s-s_1}\|u^g_\tau\|_{X^{s,b}}^3.
	\end{align*}
	
	The second term, $\EE_2$, is the most delicate. We estimate
	\begin{align}
		\tau^{-1}\|\eta_{T_1}(t_n)\EE_2(t_n,\tau,u^g_\tau)\|_{X^{s_1-1,b_1-1}_\tau}
		&\lesssim \tau^{-1}\int_0^\tau\|\eta_{T_1}(t_n)\bigl(\NN_\tau(u^g_\tau(t_n))-\NN_\tau(u^g_\tau(t_n+s))\bigr)\|_{X^{s_1-1,b_1-1}_\tau}ds\nonumber
	\end{align}
	where
	\begin{align}
	&\NN_\tau(u^g_\tau(t_n))-\NN_\tau(u^g_\tau(t_n+s))\nonumber\\
	&=-\Pi\Bigl[\Pi\bigl(u^g_\tau(t_n)-u^g_\tau(t_n+s)\bigr)\,\bigl(\Box_\tau(|\Pi u^g_\tau(t_n)|^2)-2\Pi u^g_\tau(t_n)\cdot\Box_\tau\Pi u^g_\tau(t_n)\bigr)\nonumber\\
	&\hspace{4em}+\Pi u^g_\tau(t_n+s)\,\Box_\tau\bigl(\bigl(\Pi u^g_\tau(t_n)+\Pi u^g_\tau(t_n+s)\bigr)\cdot\bigl(\Pi u^g_\tau(t_n)-\Pi u^g_\tau(t_n+s)\bigr)\bigr)\nonumber\\
	&\hspace{4em}-2\Pi u^g_\tau(t_n+s)\,\Pi\bigl(u^g_\tau(t_n)-u^g_\tau(t_n+s)\bigr)\cdot\Box_\tau\Pi u^g_\tau(t_n)\nonumber\\
	&\hspace{4em}-2\Pi u^g_\tau(t_n+s)\,\Pi u^g_\tau(t_n+s)\cdot\Box_\tau\Pi \bigl(u^g_\tau(t_n)-u^g_\tau(t_n+s)\bigr)\Bigr]\label{B-1}
	\end{align}
The difference $u^g_\tau(t_n)-u^g_\tau(t_n+s)$ is estimated in the following claim.
\begin{claim}
For $|s|<\tau$ it holds
\begin{equation*}
		\|\eta_{T_1}(t_n)\bigl(u^g_\tau(t_n)-u^g_\tau(t_n+s)\bigr)\|_{\xsobot}\lesssim_{T_1}\tau^{s-s_1}\|u^g_\tau\|_{X^{s,b}}(1+\|u^g_\tau\|_{X^{s,b}}^2).
	\end{equation*}
\end{claim}
\begin{proof}[Proof of claim.]
Write
	\begin{align}\label{eqn:true_diff}
		u^g_\tau(t_n)-u^g_\tau(t_n+s)&=(1-\LL_s)u^g_\tau(t_n)\\
		&\quad+\bigl(\eta(t_n)-\eta(t_n+s)\bigr)\LL_s\bigl(\LL_{t_n}\Pi u_0+\int_0^{t_n}\LL_{t_n-\la}\eta_{T_1}(\la)\NN_\Pi(u^g_\tau(\la))d\la\bigr)\\
		&\quad-\eta(t_n+s)\int_0^s\LL_{s-\la}\eta_{T_1}(t_n+\la)\NN_\Pi(u^g_\tau(t_n+\la))d\la.
	\end{align}
	The first term is acceptable thanks to Lemma \ref{linear_lemma}. For the second term we use Lemma \ref{cts_dis_lem} to estimate
	\begin{align*}
	&\Bigl\|\eta_{T_1}(t_n)\bigl(\eta(t_n)-\eta(t_n+s)\bigr)\LL_s\bigl(\LL_{t_n}\Pi_0+\int_0^{t_n}\LL_{t_n-\la}\eta_{T_1}(\la)\NN_\Pi( u^g_\tau(\la))d\la\bigr)\Bigr\|_{\xsobot}\\
	&\lesssim\int_0^s\Bigl\|\eta_{T_1}(t_n)\eta'(t+\tht)\LL_s\Bigl(\LL_{t}\Pi \uu_0+\int_0^{t}\LL_{t-\la}\eta_{T_1}(\la)\NN_\Pi( u^g_\tau(\la))d\la\Bigr)\Bigr\|_{X^{s_1,b_1}}d\tht\\
	&\lesssim\tau \|\eta_{T_1}(t_n)\LL_s\Bigl(\LL_{t}\Pi \uu_0+\int_0^{t}\LL_{t-\la}\eta_{T_1}(\la)\NN_\Pi( u^g_\tau(\la))d\la\Bigr)\Bigr\|_{X^{s_1,b_1}}.
	\end{align*}
	Then we use the second statement in Lemma \ref{linear_lemma} and the trilinear estimates to obtain
	\begin{align*}
	&\Bigl\|\eta_{T_1}(t_n)\bigl(\eta(t_n)-\eta(t_n+s)\bigr)\LL_s\bigl(\LL_{t_n}\Pi_0+\int_0^{t_n}\LL_{t_n-\la}\eta_{T_1}(\la)\NN_\Pi( u^g_\tau(\la))d\la\bigr)\Bigr\|_{\xsobot}\\
	&\lesssim\tau\|\eta_{T_1}(t_n)\Bigl(\LL_{t}\Pi u_0+\int_0^{t}\LL_{t-\la}\eta_{T_1}(\la)\NN_\Pi( u^g_\tau(\la))d\la\Bigr)\Bigr\|_{X^{s_1,b_1}}\\
	&\lesssim\tau\bigl(\|\uu_0\|_{H^s\x H^{s-1}}+T_1^\al(1+\|u^g_\tau\|_{X^{s,b}})\|u^g_\tau\|_{X^{s,b}}^2\bigr)
	\end{align*}
	which is more than acceptable since $T_1<1$.
	
	It remains to handle the more delicate final term. Consider a function $N(t,x)$ and the sequence $N_n^\la(x):=N(n\tau+\la,x)$. Set $b':=b-(s-s_1)$. The assumptions on $s,\,s_1,\,b$ and $b_1$ imply that $s_1-3/2>b'-1/2>0$. On the one hand, by Lemmas \ref{linear_lemma} and \ref{cts_dis_lem},
	\begin{align}\label{B1}
		\bigl\|\eta_{T_1}(t_n)\int_0^s\LL_{s-\la}N_n^\la d\la\bigr\|_{X^{s_1,b'}_\tau}\lesssim_{T_1}\int_0^s\|N_n^\la\|_{X^{s_1,b'}_\tau}d\la
		&\lesssim_{T_1}\tau \sup_{|\la|<\tau}\|N_n^\la\|_{X^{s_1,b'}_\tau}\lesssim_{T_1}\tau\|N\|_{X^{s_1,b'}}.
	\end{align}
	On the other hand, by the continuous version of \eqref{item3} we have
	\begin{align}
		\bigl\|\eta_{T_1}(t_n)\int_0^s\LL_{s-\la}N_n^\la d\la\bigr\|_{X^{s_1,b'}_\tau}&\lesssim\bigl\|\eta_{T_1}(t)\int_0^s\LL_{s-\la}N(\cdot+\la,x) d\la\bigr\|_{X^{s_1,b'}}
		\lesssim_{T_1} \|N\|_{X^{s_1-1,b'-1}_\tau}.\label{B2}
	\end{align}
	By the Stein-Weiss interpolation theorem \cite[Theorem 5.4.1]{bergh2012interpolation} we deduce that
	$$
	\bigl\|\eta_{T_1}(t_n)\int_0^s\LL_{s-\la}N_n^\la d\la\bigr\|_{X^{s_1,b'}_\tau}\lesssim_{T_1}\tau^{s-s_1}\|N\|_{\xsbm}.
	$$
	Now setting $N(t,x)=\eta_{T_1}(t)\NN_\Pi(u^g _\tau)(t,x)$ and applying the continuous nonlinear estimates we obtain
	\begin{equation*}
		\|\eta_{T_1}(t_n)\eta(t_n+s)\int_0^s\LL_{s-\la}\eta_{T_1}(t_n+\la)\NN_\Pi(u^g _\tau)(t_n+\la)d\la\|_{X^{s_1,b'}_\tau}\lesssim_{T_1}
		\| u^g _\tau\|_{X^{s,b}}^3.
	\end{equation*}
	Since we assume $b_1<b'$, this completes the proof of the claim.
\end{proof}

We now return to \eqref{B-1}. For the first line, we use the bilinear estimates of Section \ref{sec:bilinear-estimates} and Lemma \ref{lem4} to bound
	\begin{align*}
		&\tau^{-1}\int_0^\tau\|\eta_{T_1}(t_n)\bigl(\Pi(u^g_\tau(t_n)-u^g_\tau(t_n+s))\,\bigl(\Box_\tau(|\Pi u^g_\tau(t_n)|^2)-2\Pi u^g_\tau(t_n)\cdot\Box_\tau\Pi u^g_\tau(t_n)\bigr)\bigr)\|_{\xsobotm}ds\\
		&\lesssim\sup_{|s|\leq\tau}\|\eta_{T_1}(t_n)(u^g_\tau(t_n)-u^g_\tau(t_n+s))\|_{\xsobot}\|\Box_\tau(|\Pi u^g_\tau(t_n)|^2)-2\Pi u^g_\tau(t_n)\cdot\Box_\tau\Pi u^g_\tau(t_n)\|_{\xsobotm}\\
		&\lesssim_{T_1}\tau^{s-s_1}\| u^g _\tau\|_{X^{s,b}}(1+\| u^g _\tau\|_{X^{s,b}})^2(\||\Pi u^g_\tau(t_n)|^2\|_{X^{s_1,b_1}_\tau}+\|u^g_\tau(t_n)\|_{X^{s_1,b_1}_\tau}\|\Box_\tau u^g_\tau(t_n)\|_{X^{s_1-1,b_1-1}_\tau})\\
		&\lesssim_{T_1}\tau^{s-s_1}\| u^g _\tau\|_{X^{s,b}}^3(1+\| u^g _\tau\|_{X^{s,b}})^2
	\end{align*}
	
	The remaining three terms in \eqref{B-1} can be treated analogously.
	
	For $\EE_3$ we write
	\begin{align*}
		&\NNt(u^g_\tau(t_n+s))-\NN_\Pi(u^g_\tau(t_n+s))\\
		&=-\Pi\Bigl(\Pi  u^g _\tau(t_n+s)\,\bigl((\Box_\tau-\Box)(|\Pi  u^g _\tau(t_n+s)|^2)-2 \Pi  u^g _\tau(t_n+s)\cdot (\Box_\tau-\Box) \Pi u^g_\tau(t_n+s)\bigr)\Bigr).
	\end{align*}
	We apply the bilinear estimates followed by \eqref{eqn:key_diff_bound} and \eqref{eqn:key_diff_bound2} to estimate
	\begin{align*}
		\tau^{-1}\|\eta_{T_1}(t_n)\EE_3(t_n,\tau,u^g_\tau)\|_{X^{s_1-1,b_1-1}_\tau}
		&\lesssim\sup_{s\in[0,\tau]}\bigl(\|\Pi u^g_\tau(t_n+s)\|_{X^{s_1,b_1}_\tau}\|(\Box_\tau-\Box)(|\Pi u^g_\tau(t_n+s)|^2)\|_{X^{s_1-1,b_1-1}_\tau}\\
		&\hspace{6em}+\|\Pi u^g_\tau(t_n+s)\|_{X^{s_1,b_1}_\tau}^2\|(\Box_\tau-\Box)\Pi u^g_\tau(t_n+s)\|_{X^{s_1-1,b_1-1}_\tau}\bigr)\\
		&\lesssim\tau^{(s-s_1)/2}\|u^g_\tau\|_{X^{s,b}}^3(1+\|u^g_\tau\|_{X^{s,b}})^2.
	\end{align*}
	
	We finally turn to $\EE_4$. By Lemma \ref{linear_lemma} we have
	\begin{align*}
		\tau^{-1}\|\eta_{T_1}(t_n)\EE_4(t_n,\tau,u^g_\tau)\|_{\xsobotm}&\lesssim\sup_{s\in[0,\tau]}\|(I-\LL_{-s})\NN_\Pi(u^g_\tau(t_n+s))\|_{\xsobotm}\\
		&\lesssim\tau^{s-s_1}\sup_{s\in[0,\tau]}\|\NN_\Pi(u^g_\tau(t_n+s))\|_{\xsbtm}\\
		&\lesssim\tau^{s-s_1}\|u^g_\tau\|_{X^{s,b}}^3.\qedhere
	\end{align*}
\end{proof}

\section{Proof of the main theorem}\label{section:main_theorem}
We now bound $\|\uu_n-\uu_\tau(t_n)\|_{X^{s_1,b_1}([0,T_1])}$. Recall the global extension $\mathbf{w}_n$ of $\uu_n-\uu_\tau(t_n)$ defined in \eqref{eqn:w_n}. The linear Bourgain space estimates show that
\begin{align*}
\|\mathbf{w}_n\|_{\XX^{s_1,b_1}_\tau(\R)}&\lesssim \|\eta_{T_1}(t_k)\chi_{[2\tau,\infty)}(t_k)\bigl(\T(u^g_k,u^g_k,w_k)+\T(u^g_k,w_k,u^g_\tau(t_k))+\T(w_k,u^g_\tau(t_k),u^g_\tau(t_k)\bigr)\|_{X^{s_1-1,b_1-1}_\tau(\R)}\\
&\quad+\tau^{-1}\|\eta_{T_1}(t_k)\chi_{[0,\infty)}(t_k)\EE_{\text{loc}}(t_k,\tau,u^g_\tau)\|_{X^{s_1-1,b_1-1}_\tau(\R)}.
\end{align*}
Thus using Lemma \ref{lem:ctnty-time-cut-off} to discard of the sharp cut-offs, followed by the trilinear estimates and Lemma \ref{lem:local_error_bound}, we obtain
\begin{align*}
\|\mathbf{w}_n\|_{\XX^{s_1,b_1}_\tau(\R)}&\lesssim T_1^\al (1+\|u^g_n\|_{X^{s_1,b_1}_\tau(\R)}+\|u^g_\tau(t_n)\|_{X^{s_1,b_1}_\tau(\R)})^2\|w_n\|_{X^{s_1,b_1}_\tau(\R)}+C_{T_1}\tau^{(s-s_1)/2}(1+\|u^g_\tau\|_{X^{s,b}(\R)})^5\\
&\lesssim T_1^\al(1+M)^2\|\mathbf{w}_n\|_{\XX^{s_1,b_1}_\tau(\R)}+C_{T_1}\tau^{(s-s_1)/2}(1+M)^5.
\end{align*}
Choosing $T_1(T)$ sufficiently small we deduce that
\begin{equation}\label{eqn:base_case}
\|\uu_n-\uu_\tau(n\tau)\|_{\XX^{s_1,b_1}_\tau([0,T_1])}\leq\|\mathbf{w}_n\|_{\XX^{s_1,b_1}_\tau(\R)}\leq C_T\tau^{(s-s_1)/2}(1+M)^5.
\end{equation}
Combined with Theorem \ref{thm:truncation_theorem} and the embedding \eqref{item1}, this proves Theorem \ref{thm:main_thm} on the interval $[0,T_1]$.

The extension to $[0,T]$ follows by a standard induction argument. The uniform bound $\|\uu\|_{\XX^{s,b}([0,T])}\leq M$ allows us to choose $T_1(M)$ uniformly small and $\tau$ sufficiently small depending on $T_1$ to obtain uniform bounds on $\|\uu_\tau\|_{\XX^{s,b}}$ and $\|\uu_n\|_{\XX^{s_1,b_1}_\tau}$ over each subinterval and so iterate the argument. Note to obtain uniform bounds on $\|\uu_\tau\|_{\XX^{s,b}}$, one must use the $s=s_1$ version of Theorem \ref{thm:truncation_theorem}, where the convergence is not quantifiable.

\section{Numerical experiments}\label{section:numerics}
Below we present the results of a numerical implementation of our scheme. 
For the spatial approximation we work on the torus $[-10,10]^3$ with a spatial resolution $\Delta x=20/2^8$, corresponding to a frequency resolution $\Delta k=\pi/10$. All spatial approximations are performed via the fast Fourier transform and the error is measured at time $T=0.5$.

For comparison with an explicit solution, we take initial data contained in a geodesic. Precisely,
$$
u_0=(\cos\tht_0(x),\sin\tht_0(x),0),\quad v_0=(0,0,0).
$$
To fix the regularity we define $\tht_0(x)\in H^{s} \backslash H^{s+\eps}$ (for any $\eps>0$) via its Fourier transform
$$
\widehat{\tht}_0(\mathbf{k})=\lan\mathbf{k}\ran^{-(s+3/2)}(\log(2+|\mathbf{k}|^2))^{-1}.
$$
Then there is the exact solution
$$
u(t,x)=\bigl(\cos\tht(t,x),\sin\tht(t,x),0\bigr)
$$
where $\tht(t,x)$ is the solution to the homogeneous linear wave equation with data $\tht_0(x)$.

In Figure \ref{fig1} we test our scheme on smooth data ($s=3$), and observe that the error decays essentially the predicted rate of $O(\tau)$ in $L^2\times H^{-1}$.

In figure \ref{fig2} we test rough initial data in $H^{1.7}\x H^{0.7}$. Here we predict the error in $H^{1.6}\x H^{0.6}$ to decay at a rate $\tau^{0.05}$. The observed decay is stronger than predicted, although naturally much slower than the decay for smooth data. 
\begin{figure}[h]
	\centering
	\begin{subfigure}{.5\textwidth}
		\centering
		\includegraphics[scale=0.25]{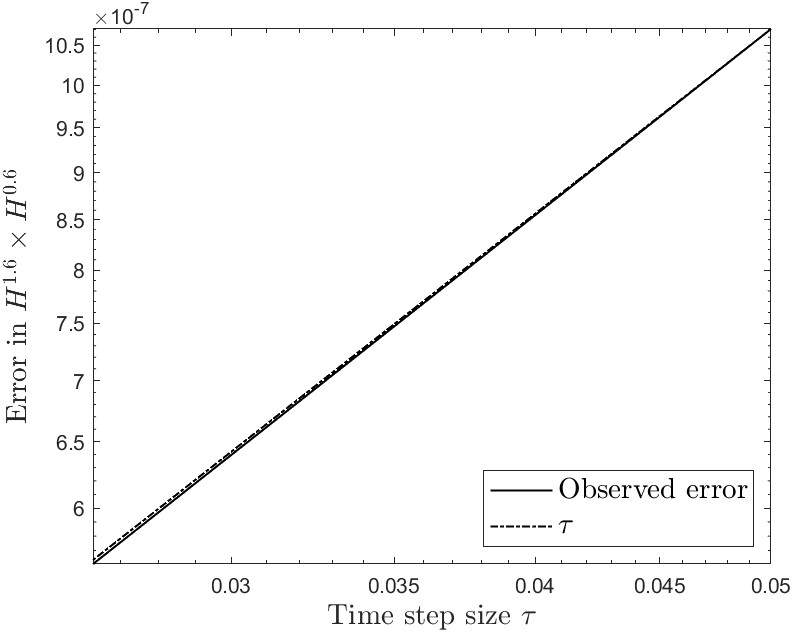}
		\caption{Convergence rate for smooth data.}
		\label{fig1}
	\end{subfigure}%
	\begin{subfigure}{.5\textwidth}
		\centering
		\includegraphics[scale=0.25]{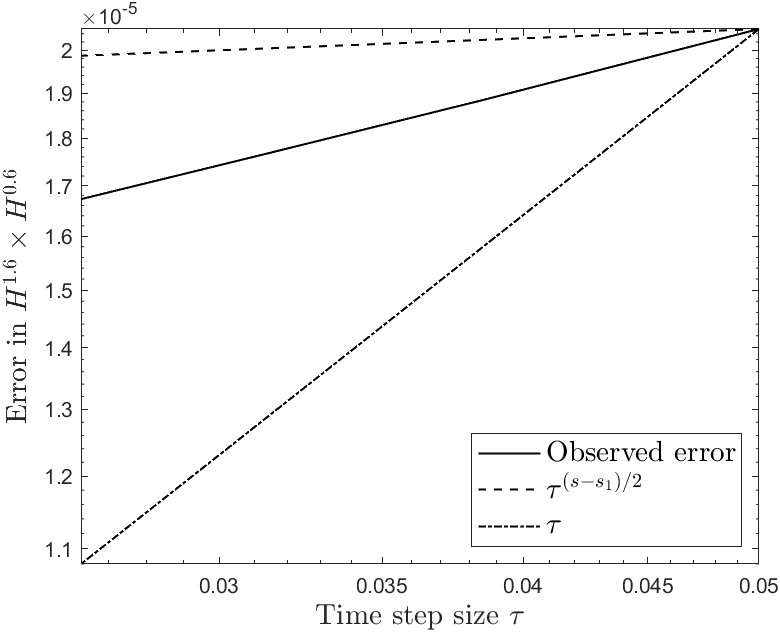}
		\caption{Convergence rate for rough data.}
		\label{fig2}
	\end{subfigure}
	\caption{}
\end{figure}




\bibliography{refs.SS} 
\end{document}